\numberwithin{equation}{section}
\newtheorem{prop}{Proposition}
\newtheorem{lemma}[prop]{Lemma}
\newtheorem{thm}[prop]{Theorem}
\newtheorem{cor}[prop]{Corollary}
\numberwithin{prop}{section}
\theoremstyle{definition}
\newtheorem{defn}[prop]{Definition}
\newtheorem{rmk}[prop]{Remark}
\newcommand{\del}{\partial}
\newcommand{\dt}{\frac{\partial}{\partial t}}
\newcommand{\brs}[1]{\left| #1 \right|}
\newcommand{\gG}{\Gamma}
\renewcommand{\gg}{\gamma}
\newcommand{\gD}{\Delta}
\newcommand{\gd}{\delta}
\newcommand{\gs}{\sigma}
\newcommand{\gl}{\lambda}
\newcommand{\gL}{\Lambda}
\newcommand{\ga}{\alpha}
\newcommand{\gb}{\beta}
\renewcommand{\ge}{\epsilon}
\newcommand{\N}{\nabla}
\newcommand{\ea}{\eta^{\alpha}}
\renewcommand{\bar}[1]{\overline{#1}}
\newcommand{\IP}[1]{\left<#1\right>}
\DeclareMathOperator{\Rc}{Rc}
\newcommand{\gU}{\Upsilon}
\newcommand{\Gpos}{\Gamma_1^{+}}
\newcommand{\Gneg}{\Gamma_1^{-}}
\newcommand{\ddt}{\frac{\partial}{\partial t}}
\newcommand{\dbrs}[1]{\brs{\brs{#1}}}
\newcommand{\dIP}[1]{\IP{\IP{#1}}}
\begin{document}

\title[Riemannian structure on conformal classes and inverse Gauss curvature flow]{A formal Riemannian structure on conformal classes and the inverse Gauss curvature flow}

\author{Matthew Gursky}
\address{Department of Mathematics
         University of Notre Dame\\
         Notre Dame, IN 46556}
\email{\href{mailto:mgursky@nd.edu}{mgursky@nd.edu}}

\author{Jeffrey Streets}
\address{Department of Mathematics\\
         University of California\\
         Irvine, CA  92617}
\email{\href{mailto:jstreets@math.uci.edu}{jstreets@math.uci.edu}}

\date{June 16th, 2015}

\begin{abstract} We define a formal Riemannian metric on a given conformal class of metrics on a closed Riemann surface.  We show interesting formal properties for this metric, in particular the curvature is nonpositive and the Liouville energy is geodesically convex.  The geodesic equation for this metric corresponds to a degenerate elliptic fully nonlinear PDE, and we prove that any two points are connected by a $C^{1,1}$ geodesic.  Using this we can define a length space structure on the given conformal class.  We present a different approach to the uniformization theorem by studying the negative gradient flow of the normalized Liouville energy, a new geometric flow whose principal term is the inverse of the Gauss curvature.  We prove long time existence of solutions with arbitrary initial data and weak convergence to constant scalar curvature metrics.  This is all a special case of a more general construction on even dimensional manifolds related to the $\sigma_{\frac{n}{2}}$-Yamabe problem, which will appear in \cite{GS}.
\end{abstract}

\maketitle

\section{Introduction}

In this paper we define a formal Riemannian metric on the set of metrics in a conformal
class with positive (or negative) curvature.  Namely, let $(M, g_0)$ be a compact Riemannian
surface with positive Gauss curvature $K_0 > 0$, and let $[g_0]$ denote the conformal class of $g_0$.  Define
\begin{align} \label{conedefintro}
\Gpos = \{ g_u = e^{2u}g_0 \in [g_0] \ :\ K_u = K_{g_{u}} > 0 \},
\end{align}
the space conformal metrics with positive Gauss curvature.
Formally, the tangent space to $[g_0]$ at any metric $g_u \in [g_0]$ is given by $C^{\infty}(M)$.
Let $K_u$ denote the Gauss curvature of $g_u \in \Gpos$.  We define for $\phi,\psi \in C^{\infty}(M)$ (cf. Definition \ref{metdef}),
\begin{align} \label{gskmetric}
 \llangle \phi, \psi \rrangle_u =&\ \int_M \phi \psi K_u dA_u,
\end{align}
where $dA_u$ is the area form of $g_u$.
In other words, we weight the standard $L^2$ metric with the Gauss curvature
of the given conformal metric.  If the Gauss curvature of $g_0$ is negative, we define
\begin{align} \label{negconeintro}
\Gneg = \{ g_u = e^{2u}g_0 \in [g_0] \ :\ K_u = K_{g_{u}} < 0 \},
\end{align}
and the metric associated to this space is given by
\begin{align} \label{gskmetricneg}
 \llangle \phi, \psi \rrangle_u =&\ \int_M \phi \psi (-K_u) dA_u.
\end{align}

This definition is loosely inspired by the
Mabuchi-Semmes-Donaldson \cite{MabuchiSymp, Semmes, Donaldson} metric of
K\"ahler geometry, wherein a formal
Riemann metric is put on a K\"ahler class by imposing on the tangent space to a
given K\"ahler potential the $L^2$ metric with respect to the associated
K\"ahler metric.  As observed in \cite{MabuchiSymp}, this metric enjoys many
nice formal
properties, for instance nonpositive sectional curvature.  Moreover, it has a
profound relationship to natural functionals in K\"ahler geometry such as the
Mabuchi $K$-energy and the Calabi energy, as well as their gradient flow, the
Calabi flow.  Based on these excellent formal properties Donaldson proposed a
series of conjectures on the existence of geodesics, geodesic rays, as well as
the existence properties of the Calabi flow.  The tremendous work of many
authors (an incomplete list of references is \cite{Blocki, CalabiChen, ChenSOKM, ChenSOKM3, ChenTian,Darvas, Guedj, PS,RW}) has resulted in
the verification of many of these conjectures,
which largely centers around a detailed analysis of the very delicate geodesic
equation, which can be interpreted as a degenerate Monge-Ampere equation.

As we will see, there is a tight analogy in many respects between the Mabuchi
metric and the metric defined in (\ref{gskmetric}).   In section \ref{MCC} we establish
a formal path derivative which can be regarded as the Levi-Civita connection
associated to the metric.  Using these we compute the sectional curvature, and
show that the metric is nonpositively curved.  Next, in section \ref{GLE} we derive the geodesic equation.  Formal calculations derived using either
the path derivative or variations of the length functional yield that a
one-parameter family of conformal factors $u : [a,b] \rightarrow \Gpos$ is a geodesic if and only if
\begin{align} \label{geodesics}
\frac{\partial^2 u}{\partial t^2} + \frac{ \displaystyle |\nabla_0 \frac{\partial u}{\partial t}|^2}{\displaystyle K_0 - \Delta_0 u} = 0.
\end{align}
We end section \ref{GLE} with the fundamental observation that one parameter families of conformal
transformations are automatically geodesics (Proposition
\ref{conformalpathsgeodesics}).

Section \ref{geoexist} contains the proof of the existence of $C^{1,1}$ geodesics connecting any two points in $\gG_1^+$.  Equation (\ref{geodesics}) turns out to be a fully nonlinear, degenerate elliptic equation.  We study a natural regularization of (\ref{geodesics}) which renders it a convex, strictly elliptic equation.  By maximum principle arguments we establish a priori $C^{1,1}$ estimates independent of the regularization parameter, which yield the existence of the $C^{1,1}$ solution as claimed.  With this in place in section \ref{metricspace} we rigorously show that the length of the unique regularizable geodesic connecting any two points does indeed define a metric space structure $(\gG_1^+,d)$ (Corollary \ref{metricspacecor}), and that this metric space is nonpositively curved in the sense of Alexandrov (Proposition \ref{NPCprop}).  The situation is summarized in the following theorem.

\begin{thm} \label{msthm} Let $(M^2, g_0)$ be a compact Riemann surface.  Then $(\gG_1^{\pm}, d)$ is a length space, with any two points connected by a unique regularizable $C^{1,1}$ geodesic.  Moreover, it is nonpositively curved in the sense of Alexandrov.
\end{thm}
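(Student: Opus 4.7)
The plan is to produce the regularizable $C^{1,1}$ geodesic between any two points of $\Gpos$ by solving a regularized elliptic Dirichlet problem with $\varepsilon$-independent estimates, and then to use this geodesic to define the length metric and verify the length-space and Alexandrov NPC properties; the case $\Gneg$ is entirely symmetric.

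Given $g_{u_0}, g_{u_1} \in \Gpos$, I would realize the geodesic equation (\ref{geodesics}) as a Dirichlet problem on $M \times [0,1]$ with $u|_{t=0} = u_0$, $u|_{t=1} = u_1$. Using the conformal identity $K_u e^{2u} = K_0 - \Delta_0 u$, membership in $\Gpos$ is the positivity $K_0 - \Delta_0 u > 0$, and (\ref{geodesics}) clears to the degenerate fully nonlinear equation
\[
u_{tt}(K_0 - \Delta_0 u) + |\nabla_0 u_t|^2 = 0,
\]
which in normal coordinates reads $K_0\, u_{tt} = \sum_i (u_{tt} u_{ii} - u_{ti}^2)$, a Monge--Amp\`ere-type equation in the space-time Hessian of $u$, closely analogous to the Mabuchi geodesic equation. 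To break the degeneracy I would replace the right-hand side by $\varepsilon(K_0 - \Delta_0 u)$, producing a strictly elliptic concave equation on the positive cone, and solve for smooth $u_\varepsilon$ by the method of continuity.

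The central analytic difficulty, which I expect to be the main obstacle, is to obtain a priori $C^{0}$, $C^{1}$, and $C^{1,1}$ bounds on $u_\varepsilon$ that are independent of $\varepsilon$. The $C^0$ and $C^1$ estimates should follow from maximum-principle comparisons with explicit barriers built from affine-in-$t$ interpolation of $u_0$ and $u_1$. The $C^{1,1}$ bound is the delicate step: one must control $\Delta_0 u_\varepsilon$ (equivalently $K_{u_\varepsilon}$ away from $0$ and $\infty$) along with the mixed and pure time second derivatives, via a maximum-principle argument on a carefully chosen test function involving $u_{tt}$ and $|\nabla_0 u_t|^2$, in the spirit of Chen's treatment of the homogeneous complex Monge--Amp\`ere equation for the Mabuchi geodesics. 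A subsequential weak-$\ast$ limit as $\varepsilon \to 0$ then yields the desired $C^{1,1}$ regularizable geodesic, and uniqueness among regularizable solutions follows by a direct maximum-principle comparison of two such solutions.

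With the geodesic in hand, I would define $d(g_{u_0}, g_{u_1})$ as its length in (\ref{gskmetric}), which is finite since $u_t$ is Lipschitz in $t$ and $K_u \in L^\infty$. Symmetry and non-negativity are immediate, positive definiteness follows from $K_u > 0$ on $\Gpos$, and the triangle inequality follows by concatenating regularizable geodesics and comparing to the direct one using the first-variation formula of Section \ref{GLE} together with the uniform estimates applied to smooth approximating paths. The length-space property then follows by observing that the lengths of the smooth approximants $u_\varepsilon$ converge to $d$ as $\varepsilon \to 0$. Finally, for Alexandrov NPC I would show that for any pair of regularizable geodesics $\gamma_1,\gamma_2$ issuing from a common point, $t \mapsto d(\gamma_1(t),\gamma_2(t))^2$ is convex: at the regularized level this is a direct consequence of the formal nonpositivity of sectional curvature computed in Section \ref{MCC}, and the uniform $C^{1,1}$ estimates of the previous step allow one to pass the convexity to the limit, yielding the CAT(0) quadrilateral inequality that characterizes Alexandrov NPC.
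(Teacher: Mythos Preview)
Your proposal is essentially the same strategy as the paper: regularize the degenerate geodesic equation, obtain $\varepsilon$-independent $C^{1,1}$ estimates by barrier and maximum-principle arguments, pass to the limit, and then use the approximating families to verify the metric-space and NPC properties. Two points deserve comment, though neither is a fundamental gap.

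First, the paper's regularization is $u_{tt}(K_0-\Delta_0 u)+|\nabla_0 u_t|^2+\varepsilon f_0=0$ with $f_0$ chosen \emph{linear in $t$} and depending only on the boundary data; this makes $f_{tt}=0$, which is exactly what kills the zeroth-order term in the evolution of $u_{tt}$ and reduces the interior $|u_{tt}|$ estimate to the strong maximum principle. Your regularization $u_{tt}(K_0-\Delta_0 u)+|\nabla_0 u_t|^2=\varepsilon(K_0-\Delta_0 u)$ should also work, but you will not get this simplification for free and will have to handle the $u_{tt}$ estimate differently.

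Second, your formulation of Alexandrov NPC as ``$t\mapsto d(\gamma_1(t),\gamma_2(t))^2$ is convex for geodesics from a common point'' is not the CAT(0) condition; convexity of the distance (or its square) between geodesics issuing from a point is the weaker Busemann nonpositivity and does not by itself imply the CAT(0) comparison inequality. The paper instead fixes a triangle $A,B,C$, builds the two-parameter family of $\varepsilon$-geodesics from $A$ to $BC(s)$, and shows directly via a Jacobi-field estimate (using the formal curvature computation of Section~\ref{MCC}) that the energy $E(s)$ satisfies $E''(s)\geq d(B,C)^2-C\varepsilon$, which in the limit gives the genuine CAT(0) inequality $d^2(A,BC(s))\leq(1-s)d(A,B)^2+s\,d(A,C)^2-s(1-s)d(B,C)^2$. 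Your sketch should be adjusted to target this inequality rather than distance convexity.
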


Furthering the analogy with the K\"ahler setting, the metric (\ref{gskmetric}) is closely
associated with the gradient flow of the normalized Liouville energy.  Previously Osgood-Phillips-Sarnack \cite{OPS} studied the negative gradient flow, but with respect to the $L^2$ metric, yielding an equation which is similar to Ricci flow.  With the ambient geometry given by the weighted $L^2$ metric on $\gG_1^+$, we arrive at a different evolution equation, expressed in terms of the conformal factor as
\begin{align*}
\frac{\del u}{\del t} =&\ - 1 + \frac{\bar{K_u}}{K_u},
\end{align*}
where $\bar{K}$ is the average Gauss curvature.  This is a fully nonlinear parabolic equation for $u$.  On $\gG_1^-$ we arrive at
\begin{align*}
\frac{\del u}{\del t} =&\ 1 - \frac{\bar{K_u}}{K_u}.
\end{align*}
Generically we will refer to these as \emph{inverse Gauss curvature flow}.  Our primary results are as follows:

\begin{thm} \label{flowthm} Fix $(M^2, g)$ a compact Riemann surface and $u \in \gG_1^{\pm}$.
\begin{enumerate}
\item The solution to IGCF with initial condition $u$ exists on $[0,\infty)$.
\item The normalized Liouville energy is convex in time along the flow line, i.e.
\begin{align*}
\frac{d^2}{dt^2} F[u(t)] \geq 0.
\end{align*}
\item Given $v(x,t)$ another solution to IGCF, the distance between flow lines is nonincreasing, i.e.
\begin{align*}
\frac{d}{dt} d(u(t), v(t)) \leq 0.
\end{align*}
\item If $u \in \gG_1^-$, then the solution converges as $t \to \infty$ in the $C^{\infty}$ topology to the unique conformal metric of constant scalar curvature.
\item If $u \in \gG_1^+$ and $(M^2, g) \cong (S^2, g_{S^2})$, then the solution converges weakly in the distance topology to a minimizer for $F$ in the completion $(\bar{\gG}_1^+, \bar{d})$.
\end{enumerate}
\end{thm}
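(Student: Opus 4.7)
The argument splits naturally: parts (2) and (3) are structural consequences of the Riemannian formalism from Sections \ref{MCC}--\ref{GLE} together with the metric space structure of Theorem \ref{msthm}, while parts (1), (4), and (5) require analysis of the fully nonlinear parabolic equation.  By construction, IGCF is the negative gradient flow of the normalized Liouville energy $F$ with respect to the metric \eqref{gskmetric}, so along any flow line
$$
\frac{d}{dt}F[u(t)] = -\llangle \dot u,\dot u\rrangle_u \le 0.
$$
Differentiating once more, and using $\nabla_{\dot u}\dot u = -\nabla_{\dot u}\grad F = -\mathrm{Hess}\, F(\dot u,\cdot)$ for the Levi-Civita connection of \eqref{gskmetric} along a gradient flow line, yields
$$
\frac{d^2}{dt^2}F[u(t)] = -2\llangle \nabla_{\dot u}\dot u, \dot u\rrangle_u = 2\,\mathrm{Hess}\, F(\dot u,\dot u) \ge 0
$$
by the geodesic convexity of $F$ established previously; this is (2).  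For (3) I would combine the NPC structure of $(\gG_1^\pm,d)$ (Theorem \ref{msthm}) with geodesic convexity of $F$ to deduce that gradient flow is $1$-Lipschitz in time, applying the first variation of length to the $C^{1,1}$ regularizable geodesic connecting $u(t)$ and $v(t)$ and then convexity of $F$ along this geodesic.

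Part (1) is the main analytic work.  Short-time existence is immediate, since the equation is quasilinear parabolic: the linearization
$$
\partial_t \phi = \frac{\bar K_u}{K_u^2}\Delta_u \phi + \text{(lower order)}
$$
is strictly parabolic whenever $u \in \gG_1^\pm$.  To globalize I would carry out: (i) monotonicity of $F$ together with the Moser--Trudinger inequality (positive case) or standard Sobolev embedding (negative case), to obtain uniform Sobolev bounds on $u$; (ii) derive an evolution equation for $K_u$, computed from $K_u = e^{-2u}(K_0 - \Delta_0 u)$ and the flow,
$$
\partial_t K_u = 2(K_u - \bar K_u) - \Delta_u\!\left(\bar K_u/K_u\right),
$$
and apply maximum principle arguments to extract two-sided bounds on $K_u$---crucially a uniform \emph{lower} bound ensuring the solution remains in $\gG_1^\pm$; (iii) invoke Evans--Krylov / Krylov--Safonov theory for the resulting concave (or convex) fully nonlinear operator to obtain $C^{2,\alpha}$ estimates; and (iv) bootstrap to uniform $C^\infty$ bounds on compact time intervals.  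The main obstacle is step (ii), specifically the lower bound on $K_u$ in the positive case, which I expect to require auxiliary test quantities such as $\log K_u + \lambda u$ with $\lambda$ tuned to cancel bad terms and close the maximum principle.

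Parts (4) and (5) then follow from the monotonicity and compactness just established.  On $\gG_1^-$, strict convexity of $F$ together with the absence of nontrivial conformal automorphisms imply a unique critical point; monotonicity in (2) forces $\llangle \dot u,\dot u\rrangle_u \to 0$ along a subsequence, and the a priori estimates from (1) upgrade this to $C^\infty$ convergence to the unique CSC metric.  On $(S^2,g_{S^2})$ the M\"obius group acts by $d$-isometries and preserves $F$, so $F$ attains its infimum only in the metric completion $(\bar\gG_1^+,\bar d)$; invoking the NPC gradient flow theory of Mayer and Jost on the length space $(\gG_1^+,d)$---which applies because of Theorem \ref{msthm} and the geodesic convexity of $F$---the orbit $u(t)$ converges weakly in the distance topology to a minimizer of the extended functional in $(\bar\gG_1^+,\bar d)$, proving (5).
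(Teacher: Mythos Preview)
Your outline is broadly correct and close in spirit to the paper's argument.  A few points of comparison, and one substantive gap.

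For parts (2) and (3) your route through $\mathrm{Hess}\,F$ and the first variation of length is a clean conceptual alternative to what the paper does: there both claims are obtained by direct computation of $\frac{d^2}{dt^2}F$ along the flow and of $\frac{d}{dt}L$ for a smooth family of IGCF solutions, each closed with the curvature--weighted Poincar\'e inequality of Andrews (Proposition~\ref{andrewsineq}).  Since the geodesic convexity of $F$ you invoke (Proposition~\ref{Fgeodconv}) is itself proved via the same inequality, the two approaches rest on the same tool; yours packages it more abstractly.  One caution for (3): your argument requires differentiating the length of the $C^{1,1}$ regularizable geodesic as its endpoints vary, which is not immediate.  The paper sidesteps this by proving length monotonicity for arbitrary smooth two-parameter families evolving by IGCF (Proposition~\ref{lengthmonotonicity}).

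For part (1), the negative case is simpler than you sketch: the maximum principle applied directly to the evolution of $K$ yields two-sided bounds $\inf_M K_0 \le K \le \sup_M K_0$ (Proposition~\ref{ncecurvest}), and then the maximum principle on the flow equation itself gives the $u$ bound (Proposition~\ref{nceuest}).  No functional inequalities enter until the Hessian estimate.

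The genuine gap is your step (i) in the positive case.  Monotonicity of $F$ alone cannot control $\|u\|_{W^{1,2}}$: $F$ is scale-invariant and constant along the noncompact family of conformal dilations, so a bound on $F$ gives no $W^{1,2}$ bound.  The paper's mechanism is different.  The \emph{unnormalized} Liouville energy $J[u]=\int(|\nabla_0 u|^2+2K_0 u)\,dA_0$ is \emph{conserved} along the flow (Lemma~\ref{n2dirichlet}); combined with the linear lower bound $u\ge \inf_M u_0 - t$ from the maximum principle (Proposition~\ref{pceulb}), this gives $\|\nabla u\|_{L^2}^2\le C(1+t)$.  Moser--Trudinger then controls $\int e^{4|u|}$, but to extract a uniform $L^\infty$ bound on $u$ one still needs a concentration estimate of Struwe ruling out curvature bubbling (Proposition~\ref{pceuub}); plain Moser--Trudinger does not suffice.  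Your instinct for the lower $K$ bound is correct---the paper uses the auxiliary quantity $tK^{-1}+Au$ (Proposition~\ref{pceclb}), in the same spirit as your $\log K + \lambda u$.

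For part (5), a small correction: the minimizers of $F$ (constant curvature metrics) \emph{do} lie in $\Gamma_1^+$, not only in the completion---indeed the paper explicitly assumes uniformization at this point.  The reason for weak rather than strong convergence is the noncompactness of the minimizing set, not its nonexistence.  Your invocation of NPC gradient--flow theory is otherwise on target; the paper combines Mayer's framework with a weak convergence theorem of Ba\v{c}\'ak, after verifying that the smooth flow coincides with the metric gradient flow.
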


\begin{rmk} Properties (2) and (3) are directly analogous to results relating the $K$-energy, Mabuchi metric, and Calabi flow (cf. \cite{CalabiChen}).  We emphasize that the point of the hypothesis $(M^2, g) \cong (S^2, g_{S^2})$ is that we are NOT yet able to use the IGCF to provide an a priori proof of the Uniformization Theorem.  We require the existence of a constant scalar curvature metric to ensure the convergence of the flow in the distance topology.
\end{rmk}

\begin{rmk}  Although our results are in the setting of two dimensions, this is actually a special case of a more general construction on even dimensional manifolds.  In dimensions $n \geq 4$, one can define a Riemannian structure on subsets of conformal classes satisfying an admissibility condition which naturally arises in the study of the $\sigma_{\frac{n}{2}}$-Yamabe problem.  As in the case of surfaces, the underlying metric is closely associated to a functional whose critical points `uniformize' the conformal class.  This will be presented in a forthcoming article \cite{GS}.   
\end{rmk}

\section{Metric, connection, and curvature}  \label{MCC}

In this section we define the formal Riemannian metric on the space of conformal metrics of positive/negative curvature.  Because of the dependence on the
sign of the curvature, we will first consider the positive case in detail, then provide the corresponding results for metrics of negative curvature without
(or at most cursory) proofs.

\subsection{The positive cone}  Let $(M,g_0)$ be a closed surface with positive Gauss curvature, and let
\begin{align} \label{conedef}
\Gpos = \{ g_u = e^{2u}g_0 \ :\ K_u = K_{g_{u}} > 0 \}
\end{align}
denote the space conformal metrics with positive Gauss curvature.  The formal tangent space at $g_u$ is
\begin{align} \label{TG}
T_u \Gpos \cong C^{\infty}(M).
\end{align}

\begin{defn} For $\alpha, \beta \in T_u \Gpos \cong C^{\infty}(M)$, define
\begin{align} \label{metdef}
\llangle \alpha, \beta \rrangle_u = \int_M \alpha \beta K_u dA_u,
\end{align}
where $dA_u$ is the volume form of the metric $g_u = e^{2u}g_0$.
\end{defn}

\begin{rmk} To simplify notation, we will often write $u \in \Gpos$ to mean $g_u = e^{2u}g_0 \in \Gpos$.  \end{rmk}

Given a path of conformal factors $u : [a,b] \rightarrow \Gpos$ and a vector field $\alpha = \alpha(\cdot,t)$ along $u$, we define
\begin{align} \label{Dtdef}
\frac{D}{\partial t}\alpha = \frac{\partial}{\partial t}\alpha + \frac{1}{K_u}\langle \nabla_u \alpha, \nabla_u \frac{\partial u}{\partial t} \rangle_u,
\end{align}
where $\langle \cdot,\cdot \rangle_u$ denotes the inner product with respect to $g_u$.

\begin{lemma} \label{comppos} The connection defined by (\ref{Dtdef}) is metric-compatible and torsion-free.  \end{lemma}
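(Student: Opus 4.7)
The plan is to verify the two defining properties of the Levi-Civita connection directly from formula (\ref{Dtdef}), with the main input being the conformal transformation law $K_u = e^{-2u}(K_0 - \Delta_0 u)$, which implies the very clean identity $K_u\, dA_u = (K_0 - \Delta_0 u)\, dA_0$.

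For metric compatibility, I would take a path $u:[a,b]\to \Gpos$ and two vector fields $\alpha(\cdot,t), \beta(\cdot,t)$ along it, then compute $\frac{d}{dt}\llangle \alpha,\beta\rrangle_u$ by differentiating under the integral. Using the identity above,
\[
\frac{\partial}{\partial t}\bigl(K_u\, dA_u\bigr) = -\Delta_0 \tfrac{\partial u}{\partial t}\, dA_0 = -\Delta_u \tfrac{\partial u}{\partial t}\, dA_u,
\]
where the second equality uses $\Delta_0 = e^{2u}\Delta_u$. Substituting and integrating the Laplacian term by parts on the closed surface $M$ yields
\[
\frac{d}{dt}\llangle \alpha,\beta\rrangle_u = \int_M\!\bigl(\dot\alpha\,\beta + \alpha\,\dot\beta\bigr)K_u\, dA_u + \int_M\!\bigl[\beta\langle\nabla_u\alpha,\nabla_u\dot u\rangle_u + \alpha\langle\nabla_u\beta,\nabla_u\dot u\rangle_u\bigr]\, dA_u.
\]
Pulling the factor $K_u$ into the gradient terms as $K_u \cdot \tfrac{1}{K_u}$, the right-hand side factors exactly as $\llangle \tfrac{D\alpha}{\partial t},\beta\rrangle_u + \llangle \alpha,\tfrac{D\beta}{\partial t}\rrangle_u$, which is what metric compatibility requires.

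For the torsion-free property, I would consider a two-parameter family $u(s,t)$ of conformal factors in $\Gpos$, with coordinate vector fields $\tfrac{\partial u}{\partial s}$ and $\tfrac{\partial u}{\partial t}$ along it, and compute both $\tfrac{D}{\partial s}\tfrac{\partial u}{\partial t}$ and $\tfrac{D}{\partial t}\tfrac{\partial u}{\partial s}$ from (\ref{Dtdef}). Each expression equals
\[
\frac{\partial^2 u}{\partial s\,\partial t} + \frac{1}{K_u}\bigl\langle \nabla_u\tfrac{\partial u}{\partial s},\,\nabla_u\tfrac{\partial u}{\partial t}\bigr\rangle_u,
\]
which is manifestly symmetric in $s$ and $t$; hence the two mixed covariant derivatives agree.

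There is no substantive obstacle in either calculation; the only subtlety is recognizing that the weighted measure $K_u\,dA_u$ has the affine-in-$u$ expression $(K_0-\Delta_0 u)\,dA_0$, so that its time derivative is the pure Laplacian term $-\Delta_u \dot u\, dA_u$, which is exactly what integration by parts converts into the Christoffel-symbol correction $\tfrac{1}{K_u}\langle\nabla_u(\cdot),\nabla_u\dot u\rangle_u$ appearing in the definition of $\tfrac{D}{\partial t}$. This is the precise sense in which (\ref{Dtdef}) is forced on us by the metric (\ref{metdef}), and why both checks go through at once.
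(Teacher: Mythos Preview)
Your proposal is correct and follows essentially the same approach as the paper. The only cosmetic difference is that you compute $\partial_t(K_u\,dA_u) = -\Delta_u\dot u\,dA_u$ via the background representation $K_u\,dA_u = (K_0-\Delta_0 u)\,dA_0$, whereas the paper obtains the same identity by combining the separate variational formulas $\partial_t K_u = -\Delta_u\dot u - 2K_u\dot u$ and $\partial_t dA_u = 2\dot u\,dA_u$; from that point on the integration by parts and the torsion computation are identical.
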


\begin{proof}  Let $\alpha, \beta$ be vector fields along the path $u : [a,b] \rightarrow \Gpos$. To simplify notation we will drop the subscript $u$, and all metric-dependent
quantities (curvature, area form, etc.) will be understood to be with respect to $g_u$.

We first prove compatibility.  This will require us to record the standard variational formulas for a path of conformal metrics $g = g(t) = e^{2u}g_0$:
\begin{align} \label{dots} \begin{split}
\frac{\partial}{\partial t}K &= - \Delta \big( \frac{\partial u}{\partial t} \big) - 2 K \frac{\partial u}{\partial t}, \\
 \frac{\partial}{\partial t} dA &= 2 \frac{\partial u}{\partial t} dA.
 \end{split}
 \end{align}
Then
\begin{align*}
\frac{d}{dt} \llangle \ga, \gb \rrangle =&\ \frac{d}{dt} \int_M \ga \gb K dA \\
=&\ \llangle \frac{\partial}{\partial t}{\ga}, \gb \rrangle_u  + \llangle \ga,\ddt \gb \rrangle_u + \int_M \ga \gb \ddt \big( K dA \big)\\
=&\ \llangle \frac{\partial}{\partial t}{\ga}, \gb \rrangle_u + \llangle \ga,\ddt \gb \rrangle_u - \int_M \ga \gb \Delta \big( \frac{\partial u}{\partial t} \big) \\
=&\ \llangle \frac{\partial}{\partial t}{\ga}, \gb \rrangle_{u} + \llangle \ga,\ddt \gb \rrangle_{u} + \int_M \langle \nabla \ga \gb + \alpha \nabla \beta , \nabla  \frac{\partial u}{\partial t} \rangle dA \\
=&\ \llangle \frac{D}{\del t} \ga,\gb \rrangle_{u} + \llangle \ga,\frac{D}{\del t} \gb \rrangle_{u}.
\end{align*}
To compute the torsion, let $u = u(\cdot, s, t)$ be a two-parameter family of conformal factors in $\Gpos$. Then
\begin{align*}
\frac{D}{\del s} \frac{\del u}{\del t} - \frac{D}{\del t} \frac{\del u}{\del
s} =&\ \frac{\del^2 u}{\del s \del t} +   \frac{1}{K_u}\langle \nabla \frac{\partial u}{\partial t} , \nabla \frac{\partial u}{\partial s} \rangle_u - \frac{\del^2 u}{\del s \del t}  - \frac{1}{K_u}\langle \nabla \frac{\partial u}{\partial s} , \nabla \frac{\partial u}{\partial t} \rangle_u\\
=&\ 0.
\end{align*}

\end{proof}

\begin{prop}   \label{SecPosProp} Given $\phi, \psi \in T_u \Gamma_1^{+}$, the sectional curvature of the plane in $T_u \Gamma_1^{+}$ spanned by $\phi, \psi$ is given by
\begin{align} \label{K2form} \begin{split}
K(\phi,\psi) &= \int \frac{1}{K_u} \Big\{ - |\nabla \phi|_u^2 |\nabla \psi|_u^2 + g_u(\nabla \phi, \nabla \psi)^2 \Big\} dA_u \\
&= - \int \frac{1}{K_u} \big| d \phi \wedge d \psi |_u^2\ dA_u \\
&\leq 0.
\end{split}
\end{align}
\end{prop}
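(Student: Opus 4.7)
The plan is to compute $R(\phi,\psi)\psi$ directly from the formal Levi-Civita connection (\ref{Dtdef}) using a suitable two-parameter family, pair with $\phi$ in the weighted inner product (\ref{gskmetric}), and then simplify via integration by parts.

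First I would fix the linear variation $u(s,t) = u_0 + s\phi + t\psi$, so that $\partial_s u \equiv \phi$ and $\partial_t u \equiv \psi$ are $(s,t)$-independent functions on $M$. Since these are coordinate vector fields and hence commute, the curvature reduces at $(s,t)=(0,0)$ to $R(\phi,\psi)\psi = \tfrac{D}{\partial s}\tfrac{D}{\partial t}\psi - \tfrac{D}{\partial t}\tfrac{D}{\partial s}\psi$. From (\ref{Dtdef}),
\begin{align*}
\tfrac{D}{\partial t}\psi = \tfrac{|\nabla\psi|^2_u}{K_u}, \qquad \tfrac{D}{\partial s}\psi = \tfrac{\langle\nabla\psi,\nabla\phi\rangle_u}{K_u}.
\end{align*}
Differentiating these ratios is the key computational step. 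Using (\ref{dots}) together with the identity $\partial_t|\nabla f|^2_u = -2(\partial_t u)|\nabla f|^2_u$ (coming from $\partial_t g_u^{-1} = -2(\partial_t u)g_u^{-1}$), a pleasant cancellation between the terms arising from $\partial K$ and from the conformal variation of the metric yields
\begin{align*}
\partial_s\!\bigl(\tfrac{|\nabla\psi|^2_u}{K_u}\bigr) = \tfrac{|\nabla\psi|^2_u\,\Delta_u\phi}{K_u^2}, \qquad
\partial_t\!\bigl(\tfrac{\langle\nabla\psi,\nabla\phi\rangle_u}{K_u}\bigr) = \tfrac{\langle\nabla\psi,\nabla\phi\rangle_u\,\Delta_u\psi}{K_u^2}.
\end{align*}

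Assembling the four resulting terms of $R(\phi,\psi)\psi$ and pairing against $\phi$ via (\ref{gskmetric}), I would integrate by parts on the two mixed terms using
\begin{align*}
\int_M \phi\,\langle\nabla f,\nabla g\rangle_u\, dA_u = -\int_M f\bigl(\phi\,\Delta_u g + \langle\nabla\phi,\nabla g\rangle_u\bigr)\, dA_u.
\end{align*}
The Laplacian contributions produced by this integration by parts exactly cancel the $\phi\Delta_u\phi$ and $\phi\Delta_u\psi$ pieces produced by the simplified identities of the previous paragraph, and what survives is precisely
\begin{align*}
\llangle R(\phi,\psi)\psi,\phi\rrangle_u = \int_M \tfrac{1}{K_u}\bigl(\langle\nabla\phi,\nabla\psi\rangle_u^2 - |\nabla\phi|^2_u|\nabla\psi|^2_u\bigr)\, dA_u,
\end{align*}
which is the first form in (\ref{K2form}).

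The second form follows from the two-dimensional Cauchy--Schwarz identity $|d\phi\wedge d\psi|^2_u = |\nabla\phi|^2_u|\nabla\psi|^2_u - \langle\nabla\phi,\nabla\psi\rangle_u^2$, and nonpositivity is immediate since $K_u > 0$ on $\Gpos$. The main obstacle I anticipate is purely bookkeeping: four distinct terms involving products of the form $\phi\Delta\phi\cdot|\nabla\psi|^2_u/K_u$ and $\phi\Delta\psi\cdot\langle\nabla\phi,\nabla\psi\rangle_u/K_u$ appear along the way, and tracking signs through the iterated covariant derivatives and the integration by parts so that everything cancels except the desired wedge expression requires care.
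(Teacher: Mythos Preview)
Your proposal is correct and follows essentially the same strategy as the paper: compute the curvature tensor from the connection (\ref{Dtdef}) on a two-parameter family and simplify the paired expression via integration by parts. Your choice of the linear family $u_0 + s\phi + t\psi$ (so that $\partial_s u$, $\partial_t u$ are $(s,t)$-independent) and the cancellation you isolate in $\partial_s\bigl(|\nabla\psi|^2_u/K_u\bigr)$ make the bookkeeping somewhat cleaner than the paper's general computation, but the method is the same.
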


\begin{proof} Let $u = u(s,t)$ be a 2-parameter family of conformal factors, and $\alpha = \alpha(s,t) \in T_{u(s,t)}\Gamma_1^{+}$.
Using the formula for the connection, we have
\begin{align} \begin{split} \label{2secD1}
\frac{D}{\partial s} \frac{D}{\partial t} \alpha &= \frac{\partial}{\partial s} \big( \frac{D}{\partial t} \alpha \big) +  \frac{1}{K_u} g_u \big( \nabla ( \frac{D}{\partial t} \alpha) , \nabla ( \frac{\partial u }{\partial s}) \big) \\
&= \frac{\partial}{\partial s} \Big\{ \frac{\partial \alpha}{\partial t} + \frac{1}{K_u} g_u\big(  \nabla \alpha , \nabla( \frac{\partial u}{\partial t}) \big) \Big\}  + \frac{1}{K_u} g_u \big( \nabla \big\{ \frac{\partial \alpha}{\partial t} + \frac{1}{K_u} g_u\big(  \nabla \alpha , \nabla( \frac{\partial u}{\partial t}) \big) \big\}, \nabla ( \frac{\partial u }{\partial s}) \big)  \\
&= I + II.
\end{split}
\end{align}
In the following, we will omit the subscript $u$, and all metric-dependent quantities will be understood to be with respect to $g_u$.

To evaluate $I$, we will need the variational formulas (\ref{dots}) along with
\begin{align} \label{Kgs}
\frac{\partial}{\partial s}g(\nabla f_1, \nabla f_2) = - 2 \frac{\partial u}{\partial s} g(\nabla f_1, \nabla f_2), \quad f_1, f_2 \in C^{\infty}(M).
\end{align}

It follows that
\begin{align} \label{SS1} \begin{split}
I &= \frac{\partial}{\partial s} \Big\{ \frac{\partial \alpha}{\partial t} + \frac{1}{K} g\big(  \nabla \alpha , \nabla( \frac{\partial u}{\partial t}) \big) \Big\} \\
&= \frac{\partial^2 \alpha}{\partial s \partial t} - \frac{1}{K^2} \frac{\partial K}{\partial s} g \big( \nabla \alpha, \nabla (\frac{\partial u}{\partial t}) \big) + \frac{1}{K} \frac{\partial}{\partial s}g \big( \nabla \alpha, \nabla (\frac{\partial u}{\partial t}) \big) + \frac{1}{K} g \big( \nabla( \frac{\partial \alpha}{\partial s}), \frac{\partial u}{\partial t}) \big) \\
& \quad + \frac{1}{K} g \big( \nabla \alpha, \nabla ( \frac{\partial^2 u}{\partial s \partial t}) \big) \\
&= \frac{\partial^2 \alpha}{\partial s \partial t} + \frac{1}{K^2}\big[ \Delta\big( \frac{\partial u}{\partial s}\big) + 2 K \frac{\partial u}{\partial s} \big] g\big( \nabla \alpha, \nabla (\frac{\partial u}{\partial t}) \big)
- 2 \frac{1}{K} \frac{\partial u}{\partial s} g\big( \nabla \alpha, \nabla (\frac{\partial u}{\partial t}) \big) \\
& \quad + \frac{1}{K} g \big( \nabla( \frac{\partial \alpha}{\partial s}), \frac{\partial u}{\partial t}) \big)  + \frac{1}{K} g \big( \nabla \alpha, \nabla ( \frac{\partial^2 u}{\partial s \partial t}) \big)  \\
&= \frac{\partial^2 \alpha}{\partial s \partial t} + \frac{1}{K^2} \Delta\big( \frac{\partial u}{\partial s} \big)  g\big( \nabla \alpha, \nabla (\frac{\partial u}{\partial t}) \big)
+ \frac{1}{K} g \big( \nabla( \frac{\partial \alpha}{\partial s}), \frac{\partial u}{\partial t}) \big)  + \frac{1}{K} g \big( \nabla \alpha, \nabla ( \frac{\partial^2 u}{\partial s \partial t}) \big).
\end{split}
\end{align}
\ \

Turning to $II$, we write
\begin{align} \label{SS2} \begin{split}
II &= \frac{1}{K} g \big( \nabla \big\{ \frac{\partial \alpha}{\partial t} + \frac{1}{K} g_u\big(  \nabla \alpha , \nabla( \frac{\partial u}{\partial t}) \big) \big\}, \nabla ( \frac{\partial u }{\partial s}) \big)  \\
&= \frac{1}{K} g \big( \nabla (\frac{\partial \alpha}{\partial t}), \nabla ( \frac{\partial u}{\partial s}) \big) + \frac{1}{K} g \big( \nabla \big\{ \frac{1}{K} g \big( \nabla \alpha, \nabla ( \frac{\partial u}{\partial t} ) \big) \big\} , \nabla ( \frac{\partial u}{\partial s} ) \big).
\end{split}
\end{align}
\ \

Combining (\ref{SS1}) and (\ref{SS2}) and skew-symmetrizing in $s,t$, we have
\begin{align} \label{SS3} \begin{split}
\Big( \frac{D}{\partial s} \frac{D}{\partial t} - \frac{D}{\partial t} \frac{D}{\partial s} \Big) \alpha &= \frac{1}{K} \Big\{ \frac{1}{K} \Delta\big( \frac{\partial u}{\partial s} \big)  g\big( \nabla \alpha, \nabla (\frac{\partial u}{\partial t}) \big) - \frac{1}{K} \Delta\big( \frac{\partial u}{\partial t} \big)  g\big( \nabla \alpha, \nabla (\frac{\partial u}{\partial s}) \big)  \\
& \hskip-.5in + g \big( \nabla \big\{ \frac{1}{K} g \big( \nabla \alpha, \nabla ( \frac{\partial u}{\partial t} ) \big) \big\} , \nabla ( \frac{\partial u}{\partial s} ) \big)
- g \big( \nabla \big\{ \frac{1}{K} g \big( \nabla \alpha, \nabla ( \frac{\partial u}{\partial s} ) \big) \big\} , \nabla ( \frac{\partial u}{\partial t} ) \big) \Big\}.
\end{split}
\end{align}
To compute the sectional curvature of the plane spanned by $\{ \frac{\partial u}{\partial s}, \frac{\partial u}{\partial t} \}$, we take $\alpha = \frac{\partial u}{\partial t}$ in the formula above, then take the inner product with $\frac{\partial u}{\partial s}$:
\begin{align} \begin{split} \label{SS4}
 \Big\langle \Big( \frac{D}{\partial s} \frac{D}{\partial t} - &\frac{D}{\partial t} \frac{D}{\partial s} \Big) \frac{\partial u}{\partial t}, \frac{\partial u}{\partial s} \Big\rangle_u  = \\ &
\hskip-.6in \int \Big\{ \frac{1}{K} \frac{\partial u}{\partial s} \Delta\big( \frac{\partial u}{\partial s} \big)  g\big( \nabla ( \frac{\partial u}{\partial t} ), \nabla (\frac{\partial u}{\partial t}) \big) - \frac{1}{K}\frac{\partial u}{\partial s} \Delta\big( \frac{\partial u}{\partial t} \big)  g\big( \nabla ( \frac{\partial u}{\partial t} ), \nabla (\frac{\partial u}{\partial s}) \big)  \\
& \hskip-.5in + \frac{\partial u}{\partial s} g \big( \nabla \big\{ \frac{1}{K} g \big( \nabla ( \frac{\partial u}{\partial t} ), \nabla ( \frac{\partial u}{\partial t} ) \big) \big\} , \nabla ( \frac{\partial u}{\partial s} ) \big)
- \frac{\partial u}{\partial s} g \big( \nabla \big\{ \frac{1}{K} g \big( \nabla ( \frac{\partial u}{\partial t} ), \nabla ( \frac{\partial u}{\partial s} ) \big) \big\} , \nabla ( \frac{\partial u}{\partial t} ) \big) \Big\}
\end{split}
\end{align}
If we integrate by parts in the last two terms, we find
\begin{align} \label{SS5} \begin{split}
\int  & \Big\{ \frac{\partial u}{\partial s} g \big( \nabla \big\{ \frac{1}{K} g \big( \nabla ( \frac{\partial u}{\partial t} ), \nabla ( \frac{\partial u}{\partial t} ) \big) \big\} , \nabla ( \frac{\partial u}{\partial s} ) \big)
- \frac{\partial u}{\partial s} g \big( \nabla \big\{ \frac{1}{K} g \big( \nabla ( \frac{\partial u}{\partial t} ), \nabla ( \frac{\partial u}{\partial s} ) \big) \big\} , \nabla ( \frac{\partial u}{\partial t} ) \big) \Big\}  \\
&= \int \Big\{ - \frac{1}{K}\frac{\partial u}{\partial s} \Delta\big( \frac{\partial u}{\partial s} \big)  g\big( \nabla ( \frac{\partial u}{\partial t} ), \nabla (\frac{\partial u}{\partial t}) \big) - \frac{1}{K} g \big( \nabla ( \frac{\partial u}{\partial s} ), \nabla ( \frac{\partial u}{\partial s} ) \big) g \big( \nabla ( \frac{\partial u}{\partial t} ), \nabla ( \frac{\partial u}{\partial t} ) \big) \\
& \quad + \frac{1}{K}\frac{\partial u}{\partial s} \Delta\big( \frac{\partial u}{\partial t} \big)  g\big( \nabla ( \frac{\partial u}{\partial s} ), \nabla (\frac{\partial u}{\partial t}) \big) + \frac{1}{K} g \big( \nabla ( \frac{\partial u}{\partial t} ), \nabla ( \frac{\partial u}{\partial s} ) \big) g \big( \nabla ( \frac{\partial u}{\partial t} ), \nabla ( \frac{\partial u}{\partial s} ) \big) \Big\}.
\end{split}
\end{align}
Substituting this into (\ref{SS4}), we have
\begin{align} \begin{split} \label{SS6}
 \Big\langle \Big( \frac{D}{\partial s} \frac{D}{\partial t} & - \frac{D}{\partial t} \frac{D}{\partial s} \Big) \frac{\partial u}{\partial t}, \frac{\partial u}{\partial s} \Big\rangle_u  \\
  & = \int \frac{1}{K} \Big\{ - g \big( \nabla ( \frac{\partial u}{\partial s} ), \nabla ( \frac{\partial u}{\partial s} ) \big) g \big( \nabla ( \frac{\partial u}{\partial t} ), \nabla ( \frac{\partial u}{\partial t} ) \big) + g \big( \nabla ( \frac{\partial u}{\partial t} ), \nabla ( \frac{\partial u}{\partial s} ) \big)^2 \Big\}  \\
&\leq 0,
\end{split}
\end{align}
as claimed.

\end{proof}

\subsection{The negative cone}  Now assume $(M,g_0)$ is a closed surface with $K_0 < 0$, and
let
\begin{align} \label{conenegdef}
\Gneg = \{ g_w = e^{2w}g_0 \ :\ K_w = K_{g_{w}} < 0 \}
\end{align}
denote the space conformal metrics with negative Gauss curvature.  The formal tangent space at $g_w$ is
\begin{align} \label{TG1}
T_u \Gneg \cong C^{\infty}(M).
\end{align}

\begin{defn} For $\alpha, \beta \in T_u \Gneg \cong C^{\infty}(M)$, define
\begin{align} \label{metdefneg}
\llangle \alpha, \beta \rrangle_{u} = \int_M \alpha \beta (-K_w) dA_w,
\end{align}
where $dA_w$ is the area form of the metric $g_w = e^{2w}g_0$.
\end{defn}

As before, we write $w \in \Gneg$ to mean $g_w = e^{2w}g_0 \in \Gneg$.  Given a path of conformal factors $w : [a,b] \rightarrow \Gpos$ and a vector field $\alpha = \alpha(\cdot,t)$ along $u$, we now define
\begin{align} \label{Dtdefneg}
\frac{D}{\partial t}\alpha = \frac{\partial}{\partial t}\alpha + \frac{1}{K_w}\langle \nabla_w \alpha, \nabla_w \frac{\partial w}{\partial t} \rangle_w.
\end{align}
The proof of the next two results are essentially the same as in the case of the positive cone:

\begin{lemma} \label{compneg} The connection defined by (\ref{Dtdefneg}) is metric-compatible and torsion-free.  \end{lemma}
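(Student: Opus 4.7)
The plan is to repeat the calculation of Lemma \ref{comppos} while carefully tracking signs, since both the metric (\ref{metdefneg}) and the connection (\ref{Dtdefneg}) involve the negative quantity $K_w$. For compatibility, I would take a path $w : [a,b] \to \Gneg$ with vector fields $\alpha,\beta$ along it and differentiate $\llangle\alpha,\beta\rrangle_w = \int_M \alpha\beta(-K_w)\,dA_w$. The variational formulas (\ref{dots}) are universal for conformal deformations, and combining them gives $\frac{\partial}{\partial t}(-K_w\, dA_w) = \Delta_w(\partial w/\partial t)\, dA_w$, which is opposite in sign to the positive-cone case because the $2K_w(\partial w/\partial t)$ term now cancels against $-K_w\cdot 2(\partial w/\partial t)$ with the reversed sign. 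Integrating by parts against $\alpha\beta$ then produces the term $-\int_M \langle\nabla(\alpha\beta),\nabla(\partial w/\partial t)\rangle_w\, dA_w$. On the other hand, the two correction terms contributing to $\llangle\frac{D}{\partial t}\alpha,\beta\rrangle_w + \llangle\alpha,\frac{D}{\partial t}\beta\rrangle_w$ each carry a factor $(1/K_w)(-K_w) = -1$, so they sum to exactly this same quantity. The two sign flips cancel and compatibility follows.

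Torsion-freeness is immediate: for a two-parameter family $w(\cdot,s,t) \in \Gneg$, applying (\ref{Dtdefneg}) directly shows that $\frac{D}{\partial s}\frac{\partial w}{\partial t} - \frac{D}{\partial t}\frac{\partial w}{\partial s}$ reduces to $(1/K_w)\langle\nabla (\partial w/\partial t),\nabla (\partial w/\partial s)\rangle_w - (1/K_w)\langle\nabla (\partial w/\partial s),\nabla (\partial w/\partial t)\rangle_w = 0$ after the mixed partial $\partial^2 w/\partial s\partial t$ cancels with itself, with the correction terms vanishing by symmetry of the inner product. Since we work in $\Gneg$, the factor $1/K_w$ is well defined throughout. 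There is no substantive obstacle in this proof; the only point requiring care is the double sign cancellation in the compatibility computation, which I would handle by keeping $-K_w$ in the metric and $K_w$ in the connection explicit rather than conflating them into a single positive quantity.
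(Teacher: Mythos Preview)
Your proposal is correct and takes essentially the same approach as the paper, which simply states that the proof is ``essentially the same as in the case of the positive cone'' without writing out the details. Your careful sign-tracking---observing that $\partial_t(-K_w\,dA_w)=\Delta_w(\partial w/\partial t)\,dA_w$ and that the correction terms in the connection pick up a factor $(1/K_w)(-K_w)=-1$, so the two sign flips cancel---is exactly what one needs to make the positive-cone computation go through verbatim.
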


\begin{prop}   \label{SecNegProp} Given $\phi, \psi \in T_w \Gneg$, we have
\begin{align} \label{K2formneg} \begin{split}
K(\phi,\psi) &= \int \frac{1}{(-K_w)} \Big\{ - |\nabla \phi|_w^2 |\nabla \psi|_w^2 +  g_w(\nabla \phi, \nabla \psi)^2 \Big\} dA_w \leq 0.
\end{split}
\end{align}
\end{prop}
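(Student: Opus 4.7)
The plan is to mirror the calculation of Proposition~\ref{SecPosProp} essentially verbatim, tracking only the places where the sign of the Gauss curvature enters. Since (\ref{Dtdefneg}) has exactly the same structural form as (\ref{Dtdef})---in particular the coefficient in front of $\langle \nabla \alpha, \nabla \partial_t w \rangle_w$ is $1/K_w$, not $1/(-K_w)$---and since the variational identities (\ref{dots}) and (\ref{Kgs}) do not depend on the sign of $K_w$, the derivation of (\ref{SS3}) goes through without modification. For a two-parameter family $w = w(s,t) \in \Gneg$ and $\alpha = \alpha(s,t) \in T_{w(s,t)}\Gneg$, I would obtain
\begin{align*}
\Big( \frac{D}{\partial s}\frac{D}{\partial t} - \frac{D}{\partial t}\frac{D}{\partial s} \Big)\alpha = \frac{1}{K_w}\{\cdots\},
\end{align*}
with bracketed expression identical to that of (\ref{SS3}) up to replacing $u$ by $w$.

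The sign of $K_w$ only enters at the final pairing. Taking $\alpha = \partial_t w$ and computing the inner product with $\partial_s w$ now uses (\ref{metdefneg}), whose weight is $-K_w\, dA_w$ rather than $+K_u\, dA_u$. The outer $1/K_w$ in the commutator formula combines with $-K_w$ to yield an overall factor of $-1$, while the remainder of the derivation---the integration-by-parts (\ref{SS5}) and the cancellations of the Laplacian terms against the resulting bulk divergences---is purely algebraic and sign-free. The net effect is to negate the positive-cone identity, producing
\begin{align*}
\llangle R(\partial_t w,\partial_s w)\partial_t w,\partial_s w\rrangle_w = \int \frac{1}{-K_w}\Big\{ -|\nabla \partial_s w|_w^2\, |\nabla \partial_t w|_w^2 + g_w(\nabla \partial_s w, \nabla \partial_t w)^2 \Big\}\, dA_w,
\end{align*}
which is (\ref{K2formneg}) after relabeling $\phi = \partial_t w$, $\psi = \partial_s w$.

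The inequality $K(\phi,\psi) \leq 0$ is then immediate: since $w \in \Gneg$ we have $-K_w > 0$ pointwise, while the braced expression is nonpositive by the Cauchy--Schwarz inequality for $g_w$. I expect the only genuine subtlety---and the reason the proposition is worth stating separately rather than being folded into Proposition~\ref{SecPosProp}---to be the bookkeeping that combines the outer $1/K_w$ with the weight $-K_w\, dA_w$ into the correct prefactor $1/(-K_w)$, thereby ensuring that the sectional curvature is nonpositive in \emph{both} cones, rather than flipping sign as the naive symmetry between (\ref{metdef}) and (\ref{metdefneg}) might initially suggest.
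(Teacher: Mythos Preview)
Your proposal is correct and follows the paper's own approach, which simply declares that the proof is ``essentially the same as in the case of the positive cone.'' You have supplied the sign-tracking the paper omits: the commutator formula (\ref{SS3}) is unchanged because the connection (\ref{Dtdefneg}) uses $1/K_w$, and the metric weight $-K_w\,dA_w$ combines with the outer $1/K_w$ to give the prefactor $1/(-K_w)$ in (\ref{K2formneg}), so nonpositivity holds for the same Cauchy--Schwarz reason.
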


\section{Geodesics, length, and energy}  \label{GLE}

Using the definition of the positive cone metric in (\ref{metdef}) we can also define the associated notions of energy and length.

\begin{defn} Given a path $u: [a,b] \rightarrow \Gpos$, the {\em energy} of $u$ is
\begin{align} \label{Edef}
E[u] = \frac{1}{2} \int_a^b  \big\|  \frac{\partial u}{\partial t} \big\|_{u}^2 dt = \frac{1}{2} \int_a^b \int_M \big( \frac{\partial u}{\partial t} \big)^2 K_u dA_u dt,
\end{align}
where
\begin{align} \label{nsq}
\big\|  \frac{\partial u}{\partial t}  \big\|_{u}^2 = \big\llangle \frac{\partial u}{\partial t}, \frac{\partial u}{\partial t} \big\rrangle_{u}
\end{align}
The {\em energy density} is
\begin{align} \label{EDef}
E_u(t) = \big\|  \frac{\partial u}{\partial t} \big\|_{u}^2 = \int_M \big( \frac{\partial u}{\partial t} \big)^2 K_u dA_u.
\end{align}
The {\em length} of $u$ is
\begin{align} \label{Ldef}
L[u] = \int_a^b  \big\| \frac{\partial u}{\partial t} \big\|_{u} dt = \int_a^b \Big[ \int_M \big( \frac{\partial u}{\partial t} \big)^2 K_u dA_u \Big]^{\frac{1}{2}} dt.
\end{align}
\end{defn}

By taking the first variation of the energy we arrive at the geodesic equation:

\begin{lemma} $u : [a,b] \rightarrow \Gpos$ is a geodesic if and only if
\begin{align} \label{geodesic}
0 = \frac{D}{\partial t}\frac{\partial u}{\partial t} = \frac{\partial^2 u}{\partial t^2} + \frac{1}{K_u} |\nabla_u \frac{\partial u}{\partial t}|^2.
\end{align}
\end{lemma}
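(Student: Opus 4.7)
The plan is to derive the geodesic equation by computing the first variation of the energy functional $E$ defined in (\ref{Edef}) and using the metric-compatibility and torsion-freeness of the connection $D/\partial t$ established in Lemma \ref{comppos}. Since the condition $K_u > 0$ is open, for any path $u : [a,b] \to \Gpos$ and any $\phi \in C^\infty(M \times [a,b])$ vanishing at $t = a, b$, we may construct a two-parameter family $u(s,t)$ with $u(0,t) = u(t)$, $\partial_s u|_{s=0} = \phi$, and $u(s,\cdot)$ staying in $\Gpos$ for small $\vert s \vert$. This gives us an ample supply of admissible variations.

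First I would write
\begin{align*}
\frac{d}{ds}\bigg|_{s=0} E[u(s,\cdot)] = \frac{1}{2}\int_a^b \frac{d}{ds}\llangle \partial_t u, \partial_t u \rrangle_u\, dt = \int_a^b \llangle \tfrac{D}{\partial s}\partial_t u, \partial_t u \rrangle_u\, dt,
\end{align*}
using metric compatibility. Then, since the connection is torsion-free, $\tfrac{D}{\partial s}\partial_t u = \tfrac{D}{\partial t}\partial_s u$, so applying metric compatibility once more in the $t$ direction yields
\begin{align*}
\frac{d}{ds}\bigg|_{s=0} E[u(s,\cdot)] = \int_a^b \frac{d}{dt}\llangle \partial_s u, \partial_t u \rrangle_u\, dt - \int_a^b \llangle \partial_s u, \tfrac{D}{\partial t}\partial_t u \rrangle_u\, dt.
\end{align*}
The boundary term vanishes because $\phi = \partial_s u|_{s=0}$ is zero at $t = a,b$, leaving
\begin{align*}
\frac{d}{ds}\bigg|_{s=0} E[u(s,\cdot)] = -\int_a^b \llangle \phi, \tfrac{D}{\partial t}\partial_t u \rrangle_u\, dt.
\end{align*}
Requiring this to vanish for all admissible $\phi$, and noting that $K_u > 0$ so $\llangle \cdot, \cdot \rrangle_u$ is a genuine inner product on $C^\infty(M)$, yields $\tfrac{D}{\partial t}\partial_t u = 0$ pointwise in $t$ by the fundamental lemma of the calculus of variations.

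Finally I would read off the claimed formula by applying the definition (\ref{Dtdef}) of the connection to $\alpha = \partial_t u$: substituting this choice gives
\begin{align*}
\tfrac{D}{\partial t}\partial_t u = \partial_t^2 u + \frac{1}{K_u}\langle \nabla_u \partial_t u, \nabla_u \partial_t u \rangle_u = \partial_t^2 u + \frac{1}{K_u}\vert \nabla_u \partial_t u \vert^2,
\end{align*}
which matches (\ref{geodesic}). There is no real obstacle here; everything is forced once the formal machinery of Section \ref{MCC} is in place. The only point that requires a moment's care is ensuring admissibility of the variations, which follows from openness of $\Gpos$ in $C^\infty(M)$.
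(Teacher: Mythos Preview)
Your proof is correct and follows exactly the approach the paper indicates: the paper simply states ``By taking the first variation of the energy we arrive at the geodesic equation'' and gives no further details, so you have supplied precisely the standard computation using the metric-compatibility and torsion-freeness from Lemma~\ref{comppos}.
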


Suppose $u : [a,b] \rightarrow \Gpos$ is a geodesic, and write $g_u = e^{2u}g_0$, where $g_0 \in \Gpos$.  By the Gauss curvature equation,
\begin{align} \label{GKE1}
K_u = e^{-2u} \big( K_0 - \Delta_0 u),
\end{align}
where $K_0$ is the Gauss curvature of $g_0$.  Therefore, we can rewrite (\ref{geodesic}) as
\begin{align} \label{geodesicpde}
\frac{\partial^2 u}{\partial t^2} + \frac{ \displaystyle |\nabla_0 \frac{\partial u}{\partial t}|^2}{\displaystyle K_0 - \Delta_0 u} = 0.
\end{align}
As we will see in Section \ref{geoexist}, this is a degenerate elliptic fully nonlinear PDE.

In the next lemma we show two basic properties of geodesics.  As a preface, we remark that there is a canonical isometric
splitting of $T\Gpos$ with respect to
the metric.  In particular, the real line $\mathbb R \subset T_u \Gpos$
given by constant functions is orthogonal to
\begin{align*}
T^0_u \Gpos := \left\{ \ga\ |\ \int_M \alpha K_u dA_u = 0 \right\}.
\end{align*}
We will see that geodesics preserve this isometric splitting, and are automatically parameterized
with constant speed:

\begin{lemma} \label{metricsplit} Let $\phi \in C^1(\mathbb{R})$, and $u : [a,b] \rightarrow \Gpos$ a geodesic.  Then
\begin{align} \label{pconst}
\frac{d}{dt} \int_M \phi\big(\frac{\partial u}{\partial t}\big) K_u dA_u = 0.
\end{align}
In particular,
\begin{align} \label{cspeed} \begin{split}
\frac{d}{dt} \int_M \frac{\partial u}{\partial t} K_u dA_u =&\ 0,\\
\frac{d}{dt} \int_M  (\frac{\partial u}{\partial t})^2 K_u dA_u  =&\ 0.
\end{split}
\end{align}
\begin{proof} Differentiating, integrating by parts, and using the geodesic equation gives
\begin{align*}
\frac{d}{dt} \int_M \phi( \frac{\partial u}{\partial t}) K_u dA_u  =&\ \int_M \Big\{  \ddt \big[ \phi (\frac{\partial u}{\partial t}) \big] K_u dA_u + \phi( \frac{\partial u}{\partial t}) \ddt(K_u dA_u) \Big\} \\
=&\ \int_M \Big\{ \phi'\big(\frac{\partial u}{\partial t}\big) \frac{\partial^2 u}{\partial t^2} K_u - \phi\big(\frac{\partial u}{\partial t}\big) \Delta( \frac{\partial u}{\partial t} ) \Big\} dA_u \\
=&\ \int_M \Big\{  \phi'\big(\frac{\partial u}{\partial t}\big) \frac{\partial^2 u}{\partial t^2} K_u  +  \phi'( \frac{\partial u}{\partial t}) \big| \nabla_u \frac{\partial u}{\partial t} \big|_u^2  \Big\} dA_u \\
=&\ \int_M  \phi'\big(\frac{\partial u}{\partial t}\big) \Big\{  \frac{\partial^2 u}{\partial t^2}  + \frac{1}{K_u} \big| \nabla_u \frac{\partial u}{\partial t} \big|_u^2  \Big\} K_u dA_u \\
=&\ 0.
\end{align*}
\end{proof}
\end{lemma}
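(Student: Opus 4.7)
The plan is to differentiate under the integral and then use the geodesic equation (\ref{geodesic}) at the very end. Writing
\begin{align*}
\frac{d}{dt}\int_M \phi\big(\tfrac{\partial u}{\partial t}\big) K_u\, dA_u = \int_M \phi'\big(\tfrac{\partial u}{\partial t}\big)\, \frac{\partial^2 u}{\partial t^2}\, K_u\, dA_u + \int_M \phi\big(\tfrac{\partial u}{\partial t}\big)\, \frac{\partial}{\partial t}\big(K_u\, dA_u\big),
\end{align*}
the goal is to rearrange the two terms so the geodesic operator $\frac{\partial^2 u}{\partial t^2} + \frac{1}{K_u}|\nabla_u \partial_t u|^2$ appears as a common factor.

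First, I would simplify the combined measure derivative $\partial_t(K_u\, dA_u)$ using the variational formulas (\ref{dots}). Since $\partial_t K_u = -\Delta_u(\partial_t u) - 2K_u \partial_t u$ while $\partial_t dA_u = 2\,\partial_t u\, dA_u$, the $\pm 2K_u \partial_t u$ contributions cancel exactly, leaving $\partial_t(K_u\, dA_u) = -\Delta_u(\partial_t u)\, dA_u$. This precise cancellation is the key algebraic observation and is really what makes the curvature-weighted metric (\ref{metdef}) natural from a variational standpoint.

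Next I would integrate by parts (on the closed surface $M$, with respect to $g_u$) on the resulting term
\begin{align*}
-\int_M \phi\big(\tfrac{\partial u}{\partial t}\big)\, \Delta_u\big(\tfrac{\partial u}{\partial t}\big)\, dA_u = \int_M \phi'\big(\tfrac{\partial u}{\partial t}\big)\, \big|\nabla_u \tfrac{\partial u}{\partial t}\big|_u^2\, dA_u.
\end{align*}
Combining with the first piece and factoring out $\phi'(\partial_t u)\, K_u\, dA_u$ yields
\begin{align*}
\frac{d}{dt}\int_M \phi\big(\tfrac{\partial u}{\partial t}\big) K_u\, dA_u = \int_M \phi'\big(\tfrac{\partial u}{\partial t}\big)\left[\frac{\partial^2 u}{\partial t^2} + \frac{\big|\nabla_u \tfrac{\partial u}{\partial t}\big|_u^2}{K_u}\right] K_u\, dA_u,
\end{align*}
and the bracket vanishes identically along a geodesic by (\ref{geodesic}). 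The two displayed specializations in (\ref{cspeed}) then follow immediately by taking $\phi(s) = s$ and $\phi(s) = s^2$.

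There is no serious obstacle here: the only analytic step is the integration by parts, which is routine on a closed surface with smooth data, and the whole argument reduces to the algebraic cancellation in $\partial_t(K_u\, dA_u)$. If anything, the philosophical takeaway is that the weighting by $K_u$ in (\ref{metdef}) is forced by this computation — any other weight would leave a residual term not matching the geodesic equation, so this lemma is a sanity check that the connection (\ref{Dtdef}) and the energy (\ref{Edef}) really fit together.
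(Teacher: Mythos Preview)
Your proof is correct and follows essentially the same route as the paper: differentiate under the integral, use the variational formulas (\ref{dots}) to see that $\partial_t(K_u\,dA_u)=-\Delta_u(\partial_t u)\,dA_u$, integrate by parts, and factor out the geodesic operator. The only difference is expository---you spell out the cancellation in $\partial_t(K_u\,dA_u)$ more explicitly than the paper does.
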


Choosing
\begin{align*}
\phi(t) =
\begin{cases}
t^p & t \geq 0 \\
0 & t < 0,\\
\end{cases}
\end{align*}
with $p >> 1$ large and apply (\ref{pconst}), then in the limit as $p \rightarrow \infty$ we have the following corollary of Lemma \ref{metricsplit}:  \\

\begin{cor} \label{supcor} If $u : [a,b] \rightarrow \Gpos$ is a geodesic, then $\sup_M \frac{\partial u}{\partial t}$ and $\inf_M \frac{\partial u}{\partial t}$
are constant in time.
\end{cor}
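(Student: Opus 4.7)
The plan is to follow the hint preceding the statement and feed a one-parameter family of $C^1$ test functions into Lemma~\ref{metricsplit}, then extract the supremum via an $L^p\to L^\infty$ limit.

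First, for each constant $c\in\mathbb R$ and integer $p\ge 2$, the function $\phi(s)=(\max(s-c,0))^p$ is $C^1$ on $\mathbb R$. Applying equation~(\ref{pconst}) produces
\begin{equation*}
I_p(t) := \int_M \bigl(\max(\tfrac{\partial u}{\partial t}-c,\,0)\bigr)^p\, K_u\, dA_u, \qquad I_p(t)\ \text{is independent of } t.
\end{equation*}
Because $K_u>0$ everywhere on $M$ by the definition of $\Gpos$, the measure $\mu_t := K_u\,dA_u$ is a finite positive measure with full support on the compact manifold $M$; in particular $I_p(t)^{1/p}$ is the $L^p(\mu_t)$-norm of the continuous function $\max(\tfrac{\partial u}{\partial t}-c,0)$.

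Next I take $p$-th roots and pass to the limit. Since $\partial u/\partial t$ is continuous on the compact cylinder $M\times[a,b]$ (the geodesic being at least $C^2$, or $C^{1,1}$ in the weak sense so that $\partial u/\partial t$ is still continuous), the classical fact that $\|\cdot\|_{L^p(\mu_t)}\to\|\cdot\|_{L^\infty(M)}$ as $p\to\infty$, applied separately for each $t$, yields
\begin{equation*}
\lim_{p\to\infty} I_p(t)^{1/p} = \sup_M \max\bigl(\tfrac{\partial u}{\partial t}-c,\,0\bigr) = \max\!\Bigl(\sup_M \tfrac{\partial u}{\partial t} - c,\ 0\Bigr).
\end{equation*}
Because $I_p(t)$ is $t$-independent for every $p$, the right-hand side is $t$-independent for every $c\in\mathbb R$.

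Finally, to promote this to constancy of $\sup_M \partial u/\partial t$ itself, I would use the uniform boundedness of $\partial u/\partial t$ on $M\times[a,b]$ to choose $c<\inf_{M\times[a,b]}\partial u/\partial t$. For such $c$ the outer $\max$ is redundant and $\sup_M \partial u/\partial t - c$ is constant in $t$, hence so is $\sup_M \partial u/\partial t$. The analogous argument with $\phi(s)=(\max(c-s,0))^p$ for $c$ taken strictly above $\sup_{M\times[a,b]}\partial u/\partial t$ yields constancy of $\inf_M \partial u/\partial t$. The only subtle technical point is the $L^p\to L^\infty$ convergence with a $t$-dependent measure $\mu_t$; this is not really an obstacle, because the conservation law gives exact $t$-independence at each finite $p$, so the limit inherits $t$-independence for free. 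The only genuine regularity hypothesis needed is continuity of $\partial u/\partial t$ on $M\times[a,b]$, which holds for any admissible notion of geodesic in this paper.
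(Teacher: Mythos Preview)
Your proof is correct and follows essentially the same approach as the paper's own argument, which is sketched in the paragraph immediately preceding the corollary: apply Lemma~\ref{metricsplit} with $\phi(s)=\max(s,0)^p$ and send $p\to\infty$. Your introduction of the shift parameter $c$ is a clean way to pass from constancy of $\max(\sup_M u_t,0)$ to constancy of $\sup_M u_t$ itself, a detail the paper leaves implicit.
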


\subsection{Example: The round sphere}

Let $(S^2,g_0)$ denote the round sphere.  Using stereographic projection $\sigma : S^2 \setminus \{ N \} \rightarrow \mathbb{R}^2$,
where $N \in S^2$ denotes the north pole, one can define a one-parameter of conformal maps of $S^2$ by conjugating the dilation map $\delta_{\alpha} : x \mapsto \alpha^{-1} x$ on the plane with $\sigma$:
\begin{align*}
\varphi_{\alpha} = \sigma^{-1} \circ \delta_{\alpha} \circ \sigma : S^2 \rightarrow S^2.
\end{align*}
Taking $\alpha(t) = e^{\lambda t}$, where $\lambda$ is a fixed real number, we can define the path of conformal metrics
\begin{align} \label{gtdef}
g(t) = e^{2u} g_{0} = \phi_{\ga}^* g_0 = \left[ \frac{2 \ga(t) }{(1 + \xi) + \alpha(t)^2 (1 - \xi)} \right]^{2},
 \end{align}
where $\xi = x^3$ is the coordinate function (see \cite{LeeParker}).

\begin{prop} \label{conformalpathsgeodesics} The path $u :  (-\infty , + \infty) \rightarrow \Gpos$ is a geodesic.  \end{prop}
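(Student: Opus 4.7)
The plan is to exploit two special features of the round sphere that make the path (\ref{gtdef}) particularly simple: along it the Gauss curvature stays identically equal to $1$, and the velocity $\del u / \del t$ is the pullback to $(S^2, g_0)$ of a \emph{fixed} function by $\varphi_{\alpha(t)}$. Together these reduce the geodesic equation (\ref{geodesic}) to a single pointwise identity on $(S^2, g_0)$ that comes from the Obata-type identity $\N_0^2 \xi = - \xi \, g_0$ satisfied by the first spherical harmonic $\xi = x^3$.

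The first fact is immediate: since $\{\varphi_\alpha\}$ is a one-parameter group of conformal diffeomorphisms, the map $\varphi_{\alpha(t)} : (S^2, g(t)) \to (S^2, g_0)$ is by construction a pointwise isometry, whence $K_{g(t)}(x) = K_{g_0}(\varphi_{\alpha(t)}(x)) = 1$ and $|\N_u \phi|_u^2(x) = |\N_0 (\phi \circ \varphi_{\alpha(t)}^{-1})|_0^2(\varphi_{\alpha(t)}(x))$ for any smooth $\phi$. For the second fact, differentiating the group law $\varphi_{\alpha(t+s)} = \varphi_{\alpha(s)} \circ \varphi_{\alpha(t)}$ in $s$ at $s = 0$ produces a fixed conformal vector field $V$ on $S^2$ with $\frac{d}{dt}\varphi_{\alpha(t)} = V \circ \varphi_{\alpha(t)}$. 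Because $\delta_\alpha$ is a radial dilation on the plane, the classification of conformal vector fields on the round sphere (or a direct computation through $\sigma$) forces $V = \N_0 h$ with $h$ a nonzero constant multiple of $\xi$. The Obata identity $\N_0^2 h = -h \, g_0$ then yields $\LL_V g_0 = 2 \N_0^2 h = -2 h \, g_0$, and pulling back by $\varphi_{\alpha(t)}$ and matching with $\del_t g(t) = 2(\del_t u) \, g(t)$ gives the key identification
\begin{align*}
\frac{\del u}{\del t}(x,t) = f(\varphi_{\alpha(t)}(x)), \qquad f := -h.
\end{align*}

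What remains is a pointwise computation. Since $K_u \equiv 1$, equation (\ref{geodesic}) simplifies to $\del_t^2 u + |\N_u \del_t u|_u^2 = 0$. The isometry observation converts the gradient term into $|\N_0 f|_0^2 \circ \varphi_{\alpha(t)}$, while time differentiation of the identification yields $\del_t^2 u = V(f) \circ \varphi_{\alpha(t)}$. The geodesic equation therefore reduces to verifying
\begin{align*}
V(f) + |\N_0 f|_0^2 = 0 \quad \text{on } (S^2, g_0).
\end{align*}
But $V = \N_0 h = -\N_0 f$, so $V(f) = g_0(\N_0 f, V) = -|\N_0 f|_0^2$, and the identity closes.

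The main obstacle I anticipate is simply bookkeeping the sign in the Obata relation $\N_0^2 h = -h \, g_0$: this sign is precisely what forces $V = -\N_0 f$, and hence the exact cancellation $V(f) + |\N_0 f|_0^2 = 0$. A more hands-on alternative would be to expand $u$ directly from (\ref{gtdef}) and verify (\ref{geodesic}) by brute force, but the route above has the conceptual advantage of showing that the result is really a restatement of the fact that geodesics in $\Gpos$ and one-parameter subgroups of conformal automorphisms should coincide.
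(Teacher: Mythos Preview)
Your argument is correct and takes a genuinely different route from the paper.  The paper simply writes out $u$ explicitly from (\ref{gtdef}), computes $u_t$, $u_{tt}$, and $\N u_t$ by hand, invokes the identity $|\N \xi|^2 = 1 - \xi^2$, and checks that $u_{tt} = -|\N_u u_t|^2$ (using $K_u \equiv 1$).  By contrast you exploit the one-parameter group structure: the identity $\frac{d}{dt}\varphi_{\alpha(t)} = V \circ \varphi_{\alpha(t)}$ together with the observation that $V$ is the gradient of a first spherical harmonic reduces the geodesic equation to the single pointwise identity $V(f) + |\N_0 f|_0^2 = 0$, which follows immediately from $V = -\N_0 f$.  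The brute-force computation in the paper is self-contained and requires no outside facts, while your argument is shorter, explains \emph{why} the result holds, and makes transparent that the same proof would work for any one-parameter subgroup of the conformal group generated by a gradient conformal field.  The one place where you lean on background is the claim that the radial dilation through stereographic projection produces $V = \N_0 h$ with $h$ a multiple of $\xi$; this is standard, and as you note a direct computation via $\sigma$ confirms it, so the argument is complete.
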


\begin{proof}  By (\ref{gtdef}),
 \begin{align*}
  u = \log 2 \ga - \log \left[ (1 + \xi) + \ga^2 (1 - \xi) \right].
 \end{align*}
Letting subscripts denote differentiation in $t$, we have
\begin{align*}
u_t = \frac{\alpha_t}{\ga} - \frac{2 \ga \alpha_t (1 - \xi)}{(1 + \xi) +
\ga^2(1 - \xi)},
\end{align*}
and hence
\begin{align*}
u_{tt} =&\ \frac{\alpha_{tt}}{\ga} - \left( \frac{\alpha_t}{\ga} \right)^2 -
\frac{[(1 + \xi) + \ga^2(1 - \xi)] (2 \ga \alpha_{tt} + 2 \alpha_t^2)(1-\xi) - 4
\ga^2 \alpha_t^2 (1-\xi)^2}{\left[ (1 + \xi) + \ga^2(1-\xi) \right]^2}.
\end{align*}
Since $\alpha = e^{\lambda t}$, this simplifies to
\begin{align} \label{accu}
u_{tt} = \frac{ - 4 \lambda^2 \alpha^2 (1-\xi)^2 }{\left[ (1 + \xi) + \ga^2(1-\xi) \right]^2}.
\end{align}
Also, if $\N$ denotes the connection with respect to the round metric,
\begin{align} \label{rhsu} \begin{split}
\N u_t =&\ \frac{2 \ga \alpha_t \N \xi}{(1 + \xi) + \ga^2(1-\xi)} +
\frac{2 \ga \alpha_t(1 - \xi)}{ \left[ (1 + \xi) + \ga^2(1-\xi) \right]^2}
\left[ (1-\ga^2) \N \xi \right]\\
=&\ \frac{2 \ga \alpha_t \N \xi}{\left[ (1 + \xi) + \ga^2(1-\xi) \right]^2}
\left[ (1+ \xi) + \ga^2(1-\xi) + (1-\xi)(1 - \ga^2) \right]\\
=&\ \frac{ 4\ga \alpha_t \N \xi}{\left[ (1 + \xi) + \ga^2(1-\xi) \right]^2}.
\end{split}
\end{align}
Using the fact that $\xi$ satisfies
\begin{align} \label{xid}
|\N \xi|^2 = 1 - \xi^2,
\end{align}
it follows from (\ref{rhsu}) and (\ref{xid}) that
\begin{align} \label{rhsK}
|\N_u u_t|^2 = \frac{ 4 \lambda^2 \alpha^2 (1-\xi)^2 }{\left[ (1 + \xi) + \ga^2(1-\xi) \right]^2}.
\end{align}
Since $K_u = 1$ for all $t$, comparing (\ref{accu}) and (\ref{rhsK}) we see that $u$ satisfies the geodesic equation (\ref{geodesic}).
\end{proof}

\subsection{Geodesics in the negative cone}

Analogous to the definitions for the positive cone, the energy and length of a path $w : [a,b] \rightarrow \Gneg$ are

\begin{align} \label{Enegdef}
E[w] = \frac{1}{2} \int_a^b  \big\llangle \frac{\partial w}{\partial t}, \frac{\partial w}{\partial t} \big\rrangle_{w} dt = \frac{1}{2} \int_a^b \int_M \big( \frac{\partial w}{\partial t} \big)^2 (-K_w) dA_w dt,
\end{align}
\begin{align} \label{Lnegdef}
L[w] = \int_a^b  \big\llangle \frac{\partial w}{\partial t}, \frac{\partial w}{\partial t} \big\rrangle_{w}^{\frac{1}{2}} dt = \int_a^b \Big[ \int_M \big( \frac{\partial w}{\partial t} \big)^2 (-K_w) dA_w \Big]^{\frac{1}{2}} dt.
\end{align}

By taking the first variation of $E$ we arrive at the geodesic equation for the negative cone:

\begin{lemma} \label{geodefneg} $w : [a,b] \rightarrow \Gneg$ is a geodesic if and only if
\begin{align} \label{geodesicNeg}
0 = \frac{D}{\partial t}\frac{\partial w}{\partial t} = \frac{\partial^2 w}{\partial t^2} + \frac{1}{K_w} |\nabla_w \frac{\partial w}{\partial t}|^2.
\end{align}
\end{lemma}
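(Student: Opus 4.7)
The plan is to derive (\ref{geodesicNeg}) via the first variation of the energy $E[w]$ from (\ref{Enegdef}), in direct analogy with the positive-cone case. Let $w(x,s,t)$ be a compactly supported variation of the given path in $\Gneg$, with $V := \partial_s w|_{s=0}$ vanishing at $t = a$ and $t = b$, and write $W := \partial_t w$. Differentiating $E$ under the integral sign and using $\partial_s W = \partial_t V$ gives
$$\frac{d}{ds}\bigg|_{s=0} E[w(\cdot, s)] = \int_a^b \int_M W \frac{\partial V}{\partial t}(-K_w) \, dA_w \, dt + \frac{1}{2}\int_a^b \int_M W^2 \frac{\partial}{\partial s}\big[(-K_w) dA_w\big] \, dt.$$
The key variational identity, which follows from $\partial_s K_w = -\Delta_w V - 2 K_w V$ together with $\partial_s dA_w = 2 V dA_w$ by cancellation of the $\mp 2 K_w V$ contributions, is
$$\frac{\partial}{\partial s}\big[(-K_w) dA_w\big] = \Delta_w V \cdot dA_w,$$
formally identical to the corresponding formula for $K_u \, dA_u$ in the positive-cone case.

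Next I would integrate by parts: in $t$ on the first term, using $V(\cdot,a) = V(\cdot,b) = 0$ together with the time analog $\partial_t[(-K_w) dA_w] = \Delta_w W \cdot dA_w$; and in $M$ on the second term, using self-adjointness of $\Delta_w$ and the identity $\Delta_w(W^2) = 2W \Delta_w W + 2 |\nabla_w W|_w^2$. The two $V \cdot W \cdot \Delta_w W$ contributions produced by the two pieces cancel, leaving
$$\frac{d}{ds}\bigg|_{s=0} E[w(\cdot, s)] = \int_a^b \int_M V \big( K_w \, \partial_t W + |\nabla_w W|_w^2 \big) \, dA_w \, dt.$$
Since $V$ is arbitrary and $K_w < 0$ on $\Gneg$, vanishing of the first variation forces $K_w \, \partial_t W + |\nabla_w W|_w^2 = 0$; dividing by $K_w$ gives the PDE in (\ref{geodesicNeg}). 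The equivalent expression $\frac{D}{\partial t}\frac{\partial w}{\partial t} = 0$ then follows immediately by substituting $\alpha = W$ into the connection formula (\ref{Dtdefneg}).

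There is no substantive obstacle here: the computation is mechanical once the variational formulas are in place. The one item demanding genuine care is sign-tracking under the reweighting $K_u \, dA_u \leadsto (-K_w) \, dA_w$, but the two minus signs, one coming from the weight and the other from the $s$-variation of $K_w$, conspire so that the Euler--Lagrange equation has precisely the same algebraic form as in the positive-cone derivation; the only formal change is that $1/K_w$ is now a negative multiplier, exactly as one would expect from the sign convention in (\ref{Dtdefneg}).
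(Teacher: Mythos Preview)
Your proof is correct and follows exactly the approach the paper indicates: the paper simply asserts that the geodesic equation arises from the first variation of the energy $E$ (just as in the positive-cone case) and gives no further details, so you have carried out precisely the computation the authors leave to the reader. Your sign-tracking is accurate, and the cancellation of the $V\cdot W\,\Delta_w W$ terms is the crux of the calculation.
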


Equation (\ref{geodesicNeg}) is equivalent to the PDE
\begin{align} \label{geodesicNegpde}
\frac{\partial^2 w}{\partial t^2} + \frac{ \displaystyle |\nabla_0 \frac{\partial w}{\partial t}|^2}{\displaystyle K_0 - \Delta_0 w } = 0.
\end{align}
We also have the basic properties of Lemma \ref{metricsplit}:

\begin{lemma} \label{metricsplitNeg} Let $w : [a,b] \rightarrow \Gneg$ be a geodesic.  Then
\begin{align*}
\frac{d}{dt} \int_M \frac{\partial w}{\partial t} (-K_w) dA_w =&\ 0,\\
\frac{d}{dt} \int_M  (\frac{\partial w}{\partial t})^2 (-K_w) dA_w  =&\ 0.
\end{align*}
\end{lemma}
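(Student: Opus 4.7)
The plan is to mirror the proof of Lemma \ref{metricsplit} essentially verbatim, since the only structural change between the positive and negative cones is the insertion of the sign $(-1)$ in front of $K_w$ in the metric. As in the positive case, the key is to establish the conservation law for a general test function $\phi \in C^1(\mathbb{R})$, namely
\[
\frac{d}{dt}\int_M \phi\bigl(\tfrac{\partial w}{\partial t}\bigr)(-K_w)\,dA_w = 0,
\]
and then specialize to $\phi(t)=t$ and $\phi(t)=t^2$ to read off the two stated identities.

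First I would record that the variational formulas $\partial_t K_w = -\Delta_w(\partial_t w) - 2K_w\,\partial_t w$ and $\partial_t dA_w = 2\,\partial_t w\, dA_w$ are unchanged in sign, so that the critical combination $(-K_w)\,dA_w$ evolves by
\[
\tfrac{\partial}{\partial t}\bigl[(-K_w)\,dA_w\bigr] = \Delta_w\bigl(\tfrac{\partial w}{\partial t}\bigr)\,dA_w,
\]
exactly analogous to the identity $\partial_t(K_u\,dA_u) = -\Delta_u(\partial_t u)\,dA_u$ used in the positive cone. Plugging this in and distributing $\partial_t$ across $\phi(\partial_t w)$ and the measure gives
\[
\frac{d}{dt}\int_M \phi\bigl(\tfrac{\partial w}{\partial t}\bigr)(-K_w)\,dA_w = \int_M \phi'\bigl(\tfrac{\partial w}{\partial t}\bigr)\tfrac{\partial^2 w}{\partial t^2}(-K_w)\,dA_w + \int_M \phi\bigl(\tfrac{\partial w}{\partial t}\bigr)\Delta_w\bigl(\tfrac{\partial w}{\partial t}\bigr)\,dA_w.
\]

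Next I would integrate by parts in the second term, producing $-\int_M \phi'(\partial_t w)|\nabla_w \partial_t w|_w^2\,dA_w$. Factoring $(-K_w)$ out of the resulting combination and invoking the geodesic equation \eqref{geodesicNeg}, the bracket $\partial_t^2 w + K_w^{-1}|\nabla_w \partial_t w|_w^2$ vanishes pointwise, so the total derivative is zero. Taking $\phi(t)=t$ and $\phi(t)=t^2$ yields the two claimed conservation laws. There is no real obstacle here: the proof is a line-by-line translation of Lemma \ref{metricsplit}, with the overall sign on $K$ canceling in both the evolution of the volume measure and in the geodesic equation, which is structurally identical to the positive case.
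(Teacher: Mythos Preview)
Your proposal is correct and follows exactly the approach the paper intends: the paper does not give a separate proof of Lemma~\ref{metricsplitNeg} but simply states it as the negative-cone analogue of Lemma~\ref{metricsplit}, and your argument is a faithful line-by-line translation of that proof with the sign of $K$ tracked correctly through the evolution of $(-K_w)\,dA_w$ and the geodesic equation.
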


\section{Existence of geodesics}  \label{geoexist}

In this section we prove {\em a priori} estimates for solutions of the geodesic equation in the positive and negative cones.  We begin by introducing a regularization of
the geodesic equation, then prove estimates for derivatives up to order two which are independent of the regularizing parameter.  Using a continuity argument, we show that
classical solutions of the regularized equation exist and are unique.  This allows us to define the notation of a {\em regularizable geodesic} (see Definition \ref{RG}).

We begin with estimates for a geodesic $u$ in the positive cone.  By Lemma \ref{metricsplit}, we may assume $u : [0,1] \rightarrow \Gpos$.  Recall from (\ref{geodesicpde}) that
$u$ satisfies the PDE
\begin{align} \label{georecap}
\frac{\partial^2 u}{\partial t^2} + \frac{ \displaystyle |\nabla_0 \frac{\partial u}{\partial t}|^2}{\displaystyle K_0 - \Delta_0 u} = 0,
\end{align}
with boundary conditions
\begin{align} \label{bdycon}
u(\cdot,0) = u_0, \quad \quad u(\cdot,1) = u_1,
\end{align}
where $u_0, u_1 \in \Gpos$.
To simplify the notation, in the following we will
 denote derivatives with respect to $t$ by subscripts, and we will omit the subscript $0$: all metric-dependent quantities are with respect to the background
 metric $g_0$.

As we will see, (\ref{georecap}) is degenerate elliptic, so it will be necessary to regularize the equation.  To simplify some of the estimates, and to clarify the
dependence on the boundary data and other parameters, we will choose a fairly specific regularization.

Define the operator
\begin{align} \label{G1}
\mathcal{G}_f(u) = u_{tt} (-\Delta u + K) + |\nabla u_t|^2 + f.
\end{align}
Suppose $u$ is a solution of
\begin{align} \label{Gf}
\mathcal{G}_f(u) = 0
\end{align}
with $f =0$.  If in addition $u$ is {\em admissible}, i.e.,
\begin{align} \label{pca}
-\Delta u + K > 0,
\end{align}
then $u \in \Gpos$ and solves (\ref{georecap}).  In Lemma \ref{ellipticLemma} we will see that (\ref{Gf}) is elliptic when $f > 0$, but degenerate elliptic when $f = 0$.
Therefore, to prove the existence of solutions of the geodesic equation, we proceed as follows: \\

\begin{enumerate}

\item Given admissible boundary data $u_0$, $u_1$, we prove the existence of an admissible solution $\tilde{u}$ to (\ref{Gf}), for a specific choice of $f = f_0 > 0$.  \\

\item For $0 < \epsilon \leq 1$ we establish {\em a priori} estimates for solutions of
\begin{align*} 
\mathcal{G}_{f}(u) = 0 \quad \quad \quad (\star_{\epsilon})
\end{align*}
with $f = \epsilon f_0$, and subject to the given boundary conditions.  \\

\item Linearizing equation $(\star_{\epsilon})$, we show that the set of $\epsilon \in [\epsilon_0, 1]$ such that $(\star_{\epsilon})$
admits a solution satisfying the given boundary data is open, for any fixed $0 < \epsilon_0 < 1$.  Combining this with the {\em a priori} estimates of
part $(2)$, the existence of solutions for all $0 < \epsilon \leq 1$ follows.  \\

\item Taking the limit as $\epsilon \rightarrow 0$, we obtain a solution $u$ of the geodesic equation (\ref{georecap}).

\end{enumerate}
\vskip.2in

For the first step, let
\begin{align} \label{tu}
\tilde{u} = \tilde{u}(x,t) = (1-t)u_0 + t u_1 + A t (1-t),
\end{align}
where $A > 0$ will be specified later.  We easily calculate
\begin{align} \label{uparts} \begin{split}
\tilde{u}_{tt} &= - 2A, \\
-\Delta \tilde{u} + K &= (1-t)(-\Delta u_0 + K) + t (-\Delta u_1 + K), \\
|\nabla \tilde{u}_t|^2 &= |\nabla (u_1 - u_0)|^2.  \\
\end{split}
\end{align}
Since $u_0$ and $u_1$ are admissible, there is a $\delta_0 > 0$ such that
\begin{align} \label{u01ad}
(-\Delta u_0 + K) \geq \delta_0, \quad (-\Delta u_1 + K) \geq \delta_0,
\end{align}
hence
\begin{align} \label{uadm}
-\Delta \tilde{u} + K = (1-t)(-\Delta u_0 + K) + t (-\Delta u_1 + K) \geq \delta_0 > 0,
\end{align}
and it follows that $\tilde{u}$ is admissible. By (\ref{uparts}),
\begin{align} \label{Gtu}
\tilde{u}_{tt} (-\Delta \tilde{u} + K ) + |\nabla \tilde{u}_t|^2 = -2A \big[ (1-t)(-\Delta u_0 + K) + t (-\Delta u_1 + K) \big] + |\nabla (u_1 - u_0)|^2 \equiv -f_0,
\end{align}
Choosing $A = A_0 > 0$ large enough (depending only on the boundary data), we have
\begin{align} \label{fz} \begin{split}
f_0 &= 2A_0 \big[ (1-t)(-\Delta u_0 + K) + t (-\Delta u_1 + K) \big] - |\nabla (u_1 - u_0)|^2 \\
&\geq 2A \delta_0 - |\nabla (u_1 - u_0)|^2 \\
&> 0.
\end{split}
\end{align}
Therefore, $\tilde{u}$ satisfies
\begin{align} \label{Gtu2}
\tilde{u}_{tt} (-\Delta \tilde{u} + K ) + |\nabla \tilde{u}_t|^2 + f_0 = 0,
\end{align}
with $f_0 > 0$.  We have thus proved  \\

\begin{lemma} \label{initLemma}  Given admissible boundary data $u_0, u_1$, there is a function $f_0 >0$ and an admissible solution $u = \tilde{u}$ of (\ref{Gf})
with $f = f_0$.
\end{lemma}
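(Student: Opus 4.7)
The plan is to write down an explicit admissible interpolation $\tilde u$ between the boundary data $u_0$ and $u_1$, and then simply \emph{define} $f_0$ to be whatever forcing term makes $\tilde u$ solve $\mathcal{G}_{f_0}(\tilde u)=0$. The only real content is choosing the interpolation carefully enough that this $f_0$ comes out strictly positive on all of $M\times[0,1]$.

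First I would take the ansatz $\tilde u(x,t) = (1-t)u_0(x) + t\, u_1(x) + At(1-t)$ with a constant $A > 0$ to be chosen. A direct calculation gives $\tilde u_{tt} = -2A$, $\nabla \tilde u_t = \nabla(u_1 - u_0)$, and
\begin{align*}
-\Delta \tilde u + K = (1-t)(-\Delta u_0 + K) + t(-\Delta u_1 + K).
\end{align*}
Since $u_0, u_1 \in \Gpos$ and $M$ is compact, admissibility provides a uniform $\delta_0>0$ with $-\Delta u_i + K \geq \delta_0$ for $i=0,1$; the convex combination above is then also $\geq \delta_0$, so $\tilde u$ is admissible throughout the slab.

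With $\tilde u$ in hand, I would define
\begin{align*}
f_0 := 2A\bigl[(1-t)(-\Delta u_0+K) + t(-\Delta u_1+K)\bigr] - |\nabla(u_1-u_0)|^2;
\end{align*}
by construction, $\mathcal{G}_{f_0}(\tilde u) = \tilde u_{tt}(-\Delta \tilde u + K) + |\nabla \tilde u_t|^2 + f_0 = 0$. To force $f_0$ to be pointwise positive, I would use the uniform estimate $f_0 \geq 2A\delta_0 - \sup_M |\nabla(u_1-u_0)|^2$ and choose $A = A_0 > (2\delta_0)^{-1}\sup_M |\nabla(u_1-u_0)|^2$. There is no genuine analytical obstacle here; the lemma is really a bookkeeping statement that produces an explicit admissible starting point for the regularized problem. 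The only point worth emphasizing is that $A_0$, and hence the resulting $f_0$, is determined quantitatively by $u_0, u_1$ and the background metric $g_0$ alone, which is precisely what will allow the continuity argument in $\epsilon$ in step (3) of the outlined strategy to proceed on a uniform slab.
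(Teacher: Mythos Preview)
Your proof is correct and follows essentially the same approach as the paper: the same linear-plus-bump ansatz $\tilde u = (1-t)u_0 + t u_1 + At(1-t)$, the same computation of $\tilde u_{tt}$, $-\Delta\tilde u + K$, and $|\nabla\tilde u_t|^2$, and the same choice of $A$ large relative to $\delta_0^{-1}\sup_M|\nabla(u_1-u_0)|^2$ to force $f_0>0$. Your closing remark that $A_0$ and $f_0$ depend only on $u_0,u_1,g_0$ is exactly the point the paper exploits later.
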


The next step is to prove $C^2$-estimates for admissible solutions $u$ of $(\star_{\epsilon})$, where $f = \epsilon f_0$, and
$u$ satisfies the given (admissible) boundary data.  To this end, let $\mathcal{L}$ denote the linearized operator, defined by
\begin{align} \label{linop} \begin{split}
\mathcal{L}_u \phi &= \frac{d}{ds} \mathcal{G}_f(u + s \phi) \Big|_{s=0} \\
&= \phi_{tt} (-\Delta u + K)  - u_{tt} \Delta \phi   + 2  \langle \nabla u_t , \nabla \phi_t \rangle.
\end{split}
\end{align}

\begin{lemma} \label{ellipticLemma} If $u$ is an admissible solution of $(\star_{\epsilon})$ with $f > 0$, then $\mathcal{L}_u$ is
elliptic.
\end{lemma}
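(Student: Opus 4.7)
The plan is to verify ellipticity directly from the principal symbol of $\mathcal{L}_u$, viewed as a second-order operator on the spacetime cylinder $M \times [0,1]$. Ellipticity reduces to checking that the associated quadratic form on $T^*_{(x,t)}(M \times [0,1])$ is positive definite, and the computation is short: the only substantive inputs beyond admissibility are the equation $(\star_\epsilon)$ itself and the Cauchy--Schwarz inequality.

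First I would read off the principal symbol. For a spacetime covector $(\tau, \xi)$ with $\tau \in \mathbb{R}$ and $\xi \in T^*_x M$, the three terms of $\mathcal{L}_u$ contribute $(-\Delta u + K)\tau^2$, $(-u_{tt})|\xi|_{g_0}^2$, and $2\tau \langle \nabla u_t, \xi\rangle_{g_0}$ respectively, yielding
$$\sigma(\mathcal{L}_u)(\tau, \xi) = (-\Delta u + K)\tau^2 + 2\tau \langle \nabla u_t, \xi\rangle_{g_0} + (-u_{tt})|\xi|_{g_0}^2.$$
Both diagonal coefficients are strictly positive: admissibility gives $-\Delta u + K > 0$, while solving $(\star_\epsilon)$ for $u_{tt}$ yields $u_{tt} = -(|\nabla u_t|^2 + f)/(-\Delta u + K) < 0$ since $f > 0$. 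This already suggests the right shape of the symbol.

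It remains to rule out degeneration from the mixed term. Treating $\sigma$ as a quadratic in $\tau$, positive definiteness amounts to the strict discriminant inequality
$$\langle \nabla u_t, \xi\rangle_{g_0}^2 + (-\Delta u + K) u_{tt}\, |\xi|_{g_0}^2 < 0$$
for every nonzero $\xi$. Substituting $(-\Delta u + K) u_{tt} = -|\nabla u_t|^2 - f$ from $(\star_\epsilon)$ rewrites the left side as $\langle \nabla u_t, \xi\rangle_{g_0}^2 - |\nabla u_t|_{g_0}^2 |\xi|_{g_0}^2 - f |\xi|_{g_0}^2$, which is bounded above by $-f|\xi|_{g_0}^2 < 0$ by Cauchy--Schwarz. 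Hence $\sigma(\mathcal{L}_u)(\tau, \xi) > 0$ for every $(\tau, \xi) \neq 0$, proving ellipticity. The \emph{only} obstruction is the sign-definiteness of $f$: when $f = 0$ the Cauchy--Schwarz inequality is saturated whenever $\xi \parallel \nabla u_t$, and the symbol acquires null directions---precisely the degeneracy of the unregularized geodesic equation that forces the continuity-method strategy outlined above.
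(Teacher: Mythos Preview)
Your proof is correct and follows essentially the same route as the paper: compute the principal symbol of $\mathcal{L}_u$, substitute for $u_{tt}$ using $(\star_\epsilon)$, and verify positivity via Cauchy--Schwarz. The paper writes the symbol in the same form, makes the identical substitution $-u_{tt} = (|\nabla u_t|^2 + f)/(-\Delta u + K)$, and then simply asserts that positivity is ``clear'' when $f>0$; you make this step explicit by computing the discriminant in $\tau$, which is a cosmetic difference only.
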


\begin{proof}    Fix a point $(x_0,t_0)$ and in normal coordinates at this point let $\alpha = \nabla u_t(x_0,t_0)$ and $\kappa = (-\Delta u + K)(x_0,t_0)$.  Then the symbol of $\mathcal{L}$ is given by
\begin{align*}
\sigma_{\mathcal{L}}(\xi,\tau) \phi &= \left\{  \kappa \tau^2 - u_{tt} |\xi|^2 + 2 \tau \alpha \cdot \xi \right\} \phi.
\end{align*}
Using the equation, we can substitute for $u_{tt}$ and write
\begin{align*}
\sigma_{\mathcal{L}}(\xi,\tau) \phi = \left\{ \kappa \tau^2 + \frac{f}{\kappa} |\xi|^2 + \frac{|\alpha|^2}{\kappa} |\xi|^2 + 2 \tau \alpha \cdot \xi \right\}\phi.
\end{align*}
If $f > 0$ it is clear that $\sigma_{\mathcal{L}}(\xi,\tau)\phi = 0$ if and only if $(\xi,\tau) = (0,0)$.
\end{proof}
\subsection{Derived Equations}
Before proceeding to the estimates we begin with some preliminary calculations.  We emphasize that in this subsection we will not make use of the fact
that $f = \epsilon f_0$; we will only need that $f > 0$.   \\

\begin{lemma} \label{dereqlemzed} Let $u$ be an admissible solution of $(\star_{\epsilon})$, and define
\begin{align} \label{Pdef}
P(t) = t(1-t).
\end{align}
Then
\begin{align*}
\mathcal{L}_u P = - 2 (-\Delta u + K).
\end{align*}

Also, if $\ell(t) = t$, then
\begin{align*}
\mathcal{L}_u \ell = 0.
\end{align*}
\end{lemma}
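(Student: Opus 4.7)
The plan is a direct substitution into the formula
\begin{align*}
\mathcal{L}_u \phi = \phi_{tt}(-\Delta u + K) - u_{tt}\Delta \phi + 2\langle \nabla u_t, \nabla \phi_t\rangle
\end{align*}
from (\ref{linop}). The key observation is that both $P(t) = t(1-t)$ and $\ell(t) = t$ depend only on the time variable, so their spatial gradients and spatial Laplacians vanish identically. Consequently the second and third terms of $\mathcal{L}_u$ drop out in both cases, and one is left to compute only $\phi_{tt}(-\Delta u + K)$.

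For $\phi = P$, the second time derivative is $P_{tt} = -2$, so this yields $\mathcal{L}_u P = -2(-\Delta u + K)$. For $\phi = \ell$, the second time derivative $\ell_{tt} = 0$ gives $\mathcal{L}_u \ell = 0$. Neither step has any subtlety; the content of the lemma is purely the identification of two convenient test functions. The main thing to verify is that the indicated time derivatives are correct and that purely-$t$ functions are annihilated by $\Delta$ and by $\nabla \partial_t$.

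I anticipate that $P$ and $\ell$ are being recorded here precisely so that they can serve in the subsequent estimates as auxiliary comparison functions: multiples of $P$ will provide barriers via the maximum principle, since $(-\Delta u + K)$ is strictly positive in the admissible regime and Lemma \ref{ellipticLemma} guarantees ellipticity of $\mathcal{L}_u$ when $f > 0$, while $\ell$ witnesses that affine functions of $t$ lie in the kernel of $\mathcal{L}_u$ and can be used to adjust boundary values without changing $\mathcal{L}_u$-image. The hard parts of the $C^2$ estimate program presumably appear later, not in this preparatory lemma.
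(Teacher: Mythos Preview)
Your argument is correct and is exactly the direct calculation the paper has in mind; the paper's own proof simply reads ``This is a straightforward calculation.'' Your additional remarks on how $P$ and $\ell$ are used later are also accurate.
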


\begin{proof}  This is a straightforward calculation.  \end{proof}

\begin{lemma} \label{dereqlem1} If $u$ an admissible
solution to $(\star_{\epsilon})$, one has
\begin{align*}
 \mathcal{L}_u u &= - 2 f - K u_{tt} \\
 &= - 2 f + K \frac{ \brs{\N u_t}^2 + f}{(-\gD u + K)}.
\end{align*}
\begin{proof} We compute
\begin{align} \label{Luu}\begin{split}
 \mathcal{L}_u u &=  u_{tt}(-\Delta u + K) - u_{tt} \Delta u  +  2 \brs{\N u_t}^2 \\
&= u_{tt}(-\Delta u + K)   + u_{tt} (-\Delta u + K - K) +  2 \brs{\N u_t}^2 \\
&= - 2 f - K u_{tt}.
\end{split}
\end{align}
Since $u$ solves $(\star_{\epsilon})$ we have
\begin{align*}
- u_{tt} = \frac{|\N u_t|^2 + f}{(-\Delta u + K)},
\end{align*}
and substituting this into (\ref{Luu}) gives the result.
\end{proof}
\end{lemma}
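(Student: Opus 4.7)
The plan is to substitute $\phi = u$ directly into the explicit formula (\ref{linop}) for the linearized operator $\mathcal{L}_u$, and then use the equation $(\star_{\epsilon})$ itself to eliminate the leading second-order terms. No integration by parts or maximum principle is required; the identity is purely algebraic.

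First I would plug $\phi = u$ into
\begin{align*}
\mathcal{L}_u \phi = \phi_{tt}(-\Delta u + K) - u_{tt}\Delta \phi + 2\langle \nabla u_t, \nabla \phi_t\rangle,
\end{align*}
to obtain $\mathcal{L}_u u = u_{tt}(-\Delta u + K) - u_{tt}\Delta u + 2|\nabla u_t|^2$. The key observation is that the second term can be rewritten by adding and subtracting $Ku_{tt}$:
\begin{align*}
-u_{tt}\Delta u = u_{tt}(-\Delta u + K) - K u_{tt}.
\end{align*}
Combining this with the first term collects a factor of $2 u_{tt}(-\Delta u + K)$, which by $(\star_\epsilon)$ equals $-2|\nabla u_t|^2 - 2f$. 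The two copies of $|\nabla u_t|^2$ cancel and one is left with $\mathcal{L}_u u = -2f - K u_{tt}$, which is the first claimed identity.

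For the second form, one uses admissibility: since $-\Delta u + K > 0$, the equation $(\star_\epsilon)$ can be solved for the second time derivative,
\begin{align*}
u_{tt} = -\,\frac{|\nabla u_t|^2 + f}{-\Delta u + K}.
\end{align*}
Plugging this into the expression $-2f - K u_{tt}$ produces $-2f + K\dfrac{|\nabla u_t|^2 + f}{-\Delta u + K}$, which is the second asserted identity.

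I do not expect any genuine obstacle here; the statement is a bookkeeping identity rather than a theorem. The only things to keep track of are the sign of $K u_{tt}$ versus $u_{tt}(-\Delta u + K)$ and the careful use of admissibility to solve for $u_{tt}$. The usefulness of this identity, presumably exploited later, comes from the fact that it rewrites $\mathcal{L}_u u$ in a form that is explicit in $f$ and in the positive quantity $|\nabla u_t|^2 + f$ divided by the positive quantity $-\Delta u + K$, which will be convenient for maximum principle arguments on barriers built from $u$.
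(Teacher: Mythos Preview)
Your proposal is correct and follows essentially the same approach as the paper: substitute $\phi = u$ into the formula for $\mathcal{L}_u$, rewrite $-u_{tt}\Delta u$ as $u_{tt}(-\Delta u + K) - K u_{tt}$, use $(\star_\epsilon)$ to replace $2u_{tt}(-\Delta u + K) + 2|\nabla u_t|^2$ by $-2f$, and then solve $(\star_\epsilon)$ for $u_{tt}$ to obtain the second form. The computation and its organization match the paper's proof line by line.
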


\begin{lemma} \label{dereqlem25} If $u$ is an admissible
solution to $(\star_{\epsilon})$, one has
\begin{align*}
\mathcal{L}_u \brs{\N u}^2 \geq  \left( 2 K|\N u|^2 - 2 \langle \N K, \N u \rangle \right) \frac{|\N u_t|^2 + f}{(-\Delta u + K)} - 2 \langle \N u , \N f \rangle.
\end{align*}

\begin{proof} Differentiating the equation in the $i^{th}$-coordinate direction gives
\begin{align} \label{dig} \begin{split}
0 =&\ \N_i \mathcal{G}_f(u)\\
=&\ \N_i u_{tt} (-\Delta u + K) + u_{tt} ( -\N_i(\Delta u) + \N_i K ) + 2 \N_k u_t \N_i \N_k u_{t} + \N_i f.
\end{split}
\end{align}
Then
\begin{align} \label{LDu} \begin{split}
 \mathcal{L}_u \brs{\N u}^2 =&\ \left( 2 \langle \N u_{tt}, \N u \rangle + 2 |\N u_{t}|^2 \right) ( -\Delta u + K) - u_{tt} \left(2 |\nabla^2 u|^2 + 2 K|\N u|^2 + 2 \langle \N \Delta u, \N u \rangle
\right) \\
&\quad + 2 \N_k u_t \left( 2 \N_k \N_i u_t \N_i u  + 2 \N_k \N_i u  \N_i u_t \right)  \\
=&\ 2 \IP{\N \mathcal{G}_f(u), \N u} + 2 |\N u_{t}|^2  ( -\Delta u + K)  - u_{tt} \left(2 |\nabla^2 u|^2 + 2 K|\N u|^2 - 2 \langle \N K, \N u \rangle \right) \\
&\quad + 4 \N_k \N_i u \N_k u_t \N_i u_t - 2 \langle \N u , \N f \rangle \\
=&\ 2 |\N u_{t}|^2  ( -\Delta u + K) + (-u_{tt}) \left(2 |\nabla^2 u|^2 + 2 K|\N u|^2 - 2 \langle \N K, \N u \rangle \right)  \\
&\quad + 4 \N_k \N_i u \N_k u_t \N_i u_t - 2 \langle \N u , \N f \rangle.
\end{split}
\end{align}
Using $(\star_{\epsilon})$ then rearranging terms,
\begin{align*}
\mathcal{L}_u \brs{\N u}^2  =&\ 2 |\N u_{t}|^2    ( -\Delta u + K) +  \left(2 |\nabla^2 u|^2 + 2 K|\N u|^2 - 2 \langle \N K, \N u \rangle \right) \frac{|\N u_t|^2 + f}{(-\Delta u + K)} \\
&\quad + 4 \N_k \N_i u \N_k u_t \N_i u_t - 2 \langle \N u , \N f \rangle \\
&= 2 (|\N u_t|^2 + f)  \frac{ |\nabla^2 u|^2}{(-\Delta u + K)}  + 2 |\N u_{t}|^2  ( -\Delta u + K) + 4 \N_k \N_i u \N_k u_t \N_i u_t \\
&\quad + \left( 2 K|\N u|^2 - 2 \langle \N K, \N u \rangle \right) \frac{|\N u_t|^2 + f}{(-\Delta u + K)} - 2 \langle \N u , \N f \rangle \\
&\geq 2 |\N u_t|^2 \left|  \frac{ \N^2 u}{\sqrt{-\gD u + K}} + \frac{\N u_t \otimes \N u_t }{|\N u_t|^2}\sqrt{-\gD u + K} \right|^2 \\
&\quad + \left( 2 K|\N u|^2 - 2 \langle \N K, \N u \rangle \right) \frac{|\N u_t|^2 + f}{(-\Delta u + K)} - 2 \langle \N u , \N f \rangle \\
&\geq \left( 2 K|\N u|^2 - 2 \langle \N K, \N u \rangle \right) \frac{|\N u_t|^2 + f}{(-\Delta u + K)} - 2 \langle \N u , \N f \rangle.
\end{align*}

\end{proof}
\end{lemma}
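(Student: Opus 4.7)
My plan is to expand $\mathcal{L}_u|\nabla u|^2$ directly from the definition of the linearized operator, combine the resulting $\nabla u_{tt}$ term with the spatial derivative of the equation $(\star_\epsilon)$, and then use the PDE to replace $u_{tt}$ by $-(|\nabla u_t|^2 + f)/(-\Delta u + K)$ in the remaining terms. The main obstacle will be controlling the quadratic terms in $\nabla^2 u$ and $\nabla u_t$ that arise, which I will bound below by recognizing a perfect square.

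First, I would differentiate the equation $\mathcal{G}_f(u) = u_{tt}(-\Delta u + K) + |\nabla u_t|^2 + f = 0$ in the $i$-th coordinate direction, giving
\begin{align*}
\nabla_i u_{tt}(-\Delta u + K) + u_{tt}(-\nabla_i \Delta u + \nabla_i K) + 2 \nabla_k u_t \nabla_i \nabla_k u_t + \nabla_i f = 0,
\end{align*}
and contract against $\nabla_i u$ to produce an identity for $2\langle \nabla u_{tt},\nabla u\rangle(-\Delta u + K)$. Simultaneously, from the definition of $\mathcal{L}_u$ I would compute
\begin{align*}
\mathcal{L}_u |\nabla u|^2 = \bigl(2\langle \nabla u_{tt}, \nabla u\rangle + 2|\nabla u_t|^2\bigr)(-\Delta u + K) - u_{tt}\,\Delta|\nabla u|^2 + 4\langle \nabla u_t, \nabla\langle \nabla u, \nabla u_t\rangle\rangle,
\end{align*}
and use the two-dimensional Bochner identity $\Delta|\nabla u|^2 = 2|\nabla^2 u|^2 + 2\langle \nabla u, \nabla \Delta u\rangle + 2K|\nabla u|^2$ (using $\mathrm{Ric}_{g_0} = K g_0$). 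Substituting the contracted derivative identity into this expansion eliminates $\nabla u_{tt}$ and produces only terms involving $u_{tt}$, $\nabla^2 u$, $\nabla u_t$ and $\nabla K$, $\nabla f$.

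Next, I would use $(\star_\epsilon)$ to replace $-u_{tt}$ by $(|\nabla u_t|^2+f)/(-\Delta u + K)$ in the coefficient of $2|\nabla^2 u|^2 + 2K|\nabla u|^2 - 2\langle \nabla K, \nabla u\rangle$, which isolates the claimed lower-order terms on the right-hand side. What remains is the genuinely quadratic combination
\begin{align*}
2(|\nabla u_t|^2 + f)\,\frac{|\nabla^2 u|^2}{-\Delta u + K} + 2|\nabla u_t|^2(-\Delta u + K) + 4\,\nabla_k\nabla_i u\, \nabla_k u_t\, \nabla_i u_t,
\end{align*}
and the main obstacle is to show this is nonnegative. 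I would handle this by discarding the nonnegative summand involving $f$, retaining $2|\nabla u_t|^2$ as an overall prefactor, and observing that the remainder is the squared norm
\begin{align*}
2|\nabla u_t|^2 \left| \frac{\nabla^2 u}{\sqrt{-\Delta u + K}} + \frac{\nabla u_t \otimes \nabla u_t}{|\nabla u_t|^2}\sqrt{-\Delta u + K}\right|^2 \geq 0,
\end{align*}
where the admissibility condition $-\Delta u + K > 0$ guarantees that the square root is real. Dropping this nonnegative quantity then yields the stated inequality. The only delicate point is the case $\nabla u_t = 0$ at a point, which is handled by first performing the computation where $\nabla u_t \neq 0$ and noting that the resulting inequality extends by continuity.
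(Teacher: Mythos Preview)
Your proposal is correct and follows essentially the same approach as the paper: both differentiate $(\star_\epsilon)$ spatially and contract with $\nabla u$ to eliminate the $\nabla u_{tt}$ term from the expansion of $\mathcal{L}_u|\nabla u|^2$, then use the equation to replace $-u_{tt}$ and complete the square exactly as you describe. Your explicit remark on the $\nabla u_t = 0$ case is a small clarification the paper omits.
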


\begin{lemma} \label{dereqlem3} If $u$ is an admissible
solution to $(\star_{\epsilon})$, then
\begin{align} \label{Ldel} \begin{split}
 \mathcal{L}_u \gD u =&\ 2 u_{tt} \frac{ |\N (-\Delta u + K)|^2}{(-\Delta u + K)} + 4 \frac{ \N_i \N_k u_t \N_k u_t \N_i (-\Delta u + K) }{(-\Delta u + K)} + 2 \frac{\langle \N f, \N (-\Delta u + K) \rangle }{(-\Delta u + K)} \\
&\ \ \ - u_{tt} \Delta K - 2 |\N^2 u_t|^2 - 2 K |\N u_t|^2 - \Delta f.
\end{split}
\end{align}
\begin{proof}
Using (\ref{dig}) we take the Laplacian of the equation to yield
\begin{align} \label{DelG} \begin{split}
0 =&\ \gD \mathcal{G}_f(u)\\
=&\ \N_i \left[   \N_i u_{tt} (-\Delta u + K) + u_{tt} ( -\N_i(\Delta u) + \N_i K ) + 2 \N_k u_t \N_i \N_k u_{t} + \N_i f \right]\\
=&\ \Delta u_{tt} (-\Delta u + K) + 2 \langle \N u_{tt} , \N (-\Delta u + K) \rangle + u_{tt}(-\Delta^2 u + \Delta K) \\
 &\ \ + 2|\N^2 u_t|^2 + 2 K |\N u_t|^2 + 2 \langle \N u, \N (\Delta u) \rangle + \Delta f \\
=&\ \mathcal{L}_u \Delta u + 2 \langle \N u_{tt} , \N (-\Delta u + K) \rangle + u_{tt} \Delta K + 2 |\N^2 u_t|^2 + 2 K |\N u_t|^2 + \Delta f.
\end{split}
\end{align}
Again using (\ref{dig}), we observe
\begin{align} \label{dig2} \begin{split}
2 \langle \N u_{tt} , \N (- \Delta u + K ) \rangle &= 2 \frac{ \langle \N \mathcal{G}_f(u), \N (-\Delta u + K) \rangle }{(-\Delta u + K)} - 2 u_{tt} \frac{ |\N (-\Delta u + K)|^2}{(-\Delta u + K)} \\
&\ \ \ - 4 \frac{ \N_i \N_k u_t \N_k u_t \N_i (-\Delta u + K) }{(-\Delta u + K)} - 2 \frac{\langle \N f, \N (-\Delta u + K) \rangle }{(-\Delta u + K)}.
\end{split}
\end{align}
Substituting this into the result of (\ref{DelG}) and rearranging terms we arrive at (\ref{Ldel}).
\end{proof}
\end{lemma}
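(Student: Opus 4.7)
The plan is a direct computation in two stages: first, take the Laplacian of the equation $\mathcal{G}_f(u) = 0$ and reorganize the result so that $\mathcal{L}_u \Delta u$ appears explicitly; second, use the already-computed first spatial derivative of the equation (identity \eqref{dig}) to eliminate the term $\langle \N u_{tt}, \N(-\Delta u + K)\rangle$ algebraically.

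For the first stage, I would apply $\N_i$ to both sides of \eqref{dig}, expanding via the Leibniz rule. The terms $\N_i\N_i u_{tt}(-\Delta u + K) + 2\langle \N u_{tt}, \N(-\Delta u + K)\rangle + u_{tt}(-\Delta^2 u + \Delta K)$ arise from differentiating the first two groups, and differentiating $2\N_k u_t \N_i\N_k u_t$ gives $2|\N^2 u_t|^2 + 2\N_k u_t\,\Delta(\N_k u_t)$. Here I would invoke the two-dimensional Bochner identity $\Delta \N_k u_t = \N_k \Delta u_t + K\,\N_k u_t$ (using $\mathrm{Rc} = K g$ on a surface), which converts the last term into $2\langle \N u_t, \N \Delta u_t\rangle + 2K|\N u_t|^2$. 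Recognizing that $\Delta u_{tt}(-\Delta u + K) - u_{tt}\Delta^2 u + 2\langle \N u_t, \N \Delta u_t\rangle$ is exactly $\mathcal{L}_u \Delta u$ (by the definition \eqref{linop}, using that $t$-derivatives commute with $\N$ and $\Delta$), I obtain
\begin{align*}
0 = \mathcal{L}_u \Delta u + 2\langle \N u_{tt}, \N(-\Delta u + K)\rangle + u_{tt}\Delta K + 2|\N^2 u_t|^2 + 2K|\N u_t|^2 + \Delta f.
\end{align*}

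For the second stage, I would solve \eqref{dig} for $\N_i u_{tt}$ and pair with $\N_i(-\Delta u + K)$, yielding
\begin{align*}
2\langle \N u_{tt}, \N(-\Delta u + K)\rangle = -2u_{tt}\frac{|\N(-\Delta u + K)|^2}{-\Delta u + K} - \frac{4\,\N_i\N_k u_t\,\N_k u_t\,\N_i(-\Delta u + K)}{-\Delta u + K} - \frac{2\langle \N f, \N(-\Delta u + K)\rangle}{-\Delta u + K}.
\end{align*}
Substituting this into the previous identity and moving $\mathcal{L}_u \Delta u$ to the left yields precisely \eqref{Ldel}. Admissibility ensures $-\Delta u + K > 0$ so the divisions are legitimate, and $f > 0$ plays no role in this algebraic manipulation.

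The entire argument is mechanical; the only place that requires genuine care is the Bochner step, which is what produces the $2K|\N u_t|^2$ contribution and is the only place the surface hypothesis $\mathrm{Rc} = K g$ enters. Bookkeeping the indices on the term $\N_i\N_k u_t\,\N_k u_t\,\N_i(-\Delta u + K)$ so that it matches the stated formula is the other detail that warrants attention, but presents no conceptual difficulty.
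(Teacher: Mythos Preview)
Your proposal is correct and follows essentially the same approach as the paper: take the Laplacian of $\mathcal{G}_f(u)=0$, use the Bochner identity on a surface to commute $\Delta$ past $\nabla_k$ on $u_t$, recognize $\mathcal{L}_u\Delta u$, and then eliminate $\langle \nabla u_{tt}, \nabla(-\Delta u + K)\rangle$ via \eqref{dig}. If anything, you are slightly more explicit than the paper about the Bochner step that produces the $2K|\nabla u_t|^2$ term.
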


\begin{lemma} \label{dereqlem5} If $u$ is an admissible
solution to $(\star_{\epsilon})$, one has
 \begin{align*}
  \mathcal{L}_u u_{tt} = 2 u_{ttt} \Delta u_t  - 2 |\N u_{tt}|^2 - f_{tt}.
 \end{align*}
 \begin{proof} Differentiating $(\star_{\epsilon})$ twice yields
\begin{align*}
[ \mathcal{G}_f(u)]_{tt} &=  u_{tttt} (-\Delta u + K) + 2 u_{ttt} (-\Delta u_t) + u_{tt} (-\Delta u_{tt}) + 2 \langle \N u_{ttt}, \N u_t \rangle + 2 |\N u_{tt}|^2  + f_{tt} \\
&= \mathcal{L}_u u_{tt} + 2 u_{ttt} (-\Delta u_t)  + 2 |\N u_{tt}|^2  + f_{tt},
\end{align*}
and the result follows.
 \end{proof}
\end{lemma}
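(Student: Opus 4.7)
The plan is a one-step calculation: differentiate $(\star_\epsilon)$ twice in $t$ and recognize $\mathcal{L}_u u_{tt}$ inside the resulting expansion. First I would apply $\partial_t^2$ to $\mathcal{G}_f(u) = u_{tt}(-\Delta u + K) + |\N u_t|^2 + f$, using that the background metric $g_0$ and its Gauss curvature $K$ do not depend on $t$. The Leibniz rule then produces
$$[\mathcal{G}_f(u)]_{tt} = u_{tttt}(-\Delta u + K) + 2 u_{ttt}(-\Delta u_t) + u_{tt}(-\Delta u_{tt}) + 2|\N u_{tt}|^2 + 2\langle \N u_t, \N u_{ttt}\rangle + f_{tt}.$$

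Next I would compare this with the definition (\ref{linop}) of $\mathcal{L}_u$ evaluated on $\phi = u_{tt}$, namely
$$\mathcal{L}_u u_{tt} = u_{tttt}(-\Delta u + K) - u_{tt}\,\Delta u_{tt} + 2\langle \N u_t, \N u_{ttt}\rangle.$$
This recovers exactly the first, third, and fifth terms of the previous display. Since $[\mathcal{G}_f(u)]_{tt} = 0$, solving for $\mathcal{L}_u u_{tt}$ gives
$$\mathcal{L}_u u_{tt} = -2 u_{ttt}(-\Delta u_t) - 2|\N u_{tt}|^2 - f_{tt} = 2 u_{ttt}\,\Delta u_t - 2|\N u_{tt}|^2 - f_{tt},$$
which is the stated identity.

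There is essentially no obstacle here; the content is pure bookkeeping of $t$-derivatives. Admissibility enters only to ensure that $(\star_\epsilon)$ holds and that $\mathcal{L}_u$ is genuinely the linearization at $u$ (cf. Lemma \ref{ellipticLemma}). The one minor point to watch is the symmetric cross term produced by $\partial_t^2|\N u_t|^2$, which accounts for the factor of $2$ in the $2|\N u_{tt}|^2$ term.
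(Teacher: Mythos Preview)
Your proof is correct and essentially identical to the paper's: both differentiate $(\star_\epsilon)$ twice in $t$, expand via the Leibniz rule, and recognize $\mathcal{L}_u u_{tt}$ among the resulting terms to read off the identity. The only (harmless) difference is expository---you explicitly write out $\mathcal{L}_u u_{tt}$ from the definition before matching terms, whereas the paper does the matching in one line.
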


\subsection{\texorpdfstring{$C^0$}{C0}-estimates}

We now turn to the estimates proper, and will frequently take advantage of the special choice of $f = \epsilon f_0$ with $0 < \epsilon \leq 1$.  In particular, we note
that $|f| \leq C \epsilon$, and
\begin{align} \label{fbounds}
\max_{M \times [0,1]} \big[ \frac{|\nabla f|^2}{f^2}  + \frac{|\nabla^2 f|}{f} \big] \leq C,
\end{align}
where $C$ only depends on the boundary data $u_0, u_1$.  In addition, since
\begin{align} \label{fo}
f_0 = 2A_0 \big[ (1-t)(-\Delta u_0 + K) + t (-\Delta u_1 + K) \big] - |\nabla (u_1 - u_0)|^2
\end{align}
is linear in $t$, it follows that
\begin{align} \label{fttz}
f_{tt} = 0.
\end{align}

\begin{prop} \label{intestprop1} If $u$ is an admissible solution
to $(\star_{\epsilon})$, then there exists a constant $C$ which depends on the boundary data and $\min_M K$, such that
\begin{align*}
\sup_{M \times [0,1]} \brs{u} \leq C.
\end{align*}
\begin{proof} First we observe that equation $(\star_{\epsilon})$ and the assumption that $f > 0$ imply that
$u_{tt} \leq 0$.  Using this and the fundamental theorem of calculus yields a
lower bound for $u$ depending on $\inf_M u_0$ and $\inf_M u_1$.

To obtain an upper bound, let $A > 0$ (to be chosen later) and define
\begin{align*}
\tilde{u} = u - A t (1-t).
\end{align*}
Then by Lemmas \ref{dereqlemzed} and \ref{dereqlem1},
\begin{align*}
\mathcal{L}_u \tilde{u} &= - 2 f + K \frac{ \brs{\N u_t}^2 + f}{(-\gD u + K)} + 2A (-\Delta u + K) \\
&\geq -2 f + 2A ( - \Delta \tilde{u} + K).
\end{align*}
If we choose $A$ large enough (depending on $\min_M K$ and $\max_{M \times [0,1]} f$), it follows from the maximum principle that $\tilde{u}$ cannot have an interior
maximum.  Therefore,
\begin{align*}
\sup_{M \times [0,1]} \tilde{u} = \sup_{M \times \{ 0,1\} } \tilde{u} = \sup_{M \times \{ 0,1 \}} u,
\end{align*}
and $u$ has an upper bound depending on $A = A(\min_M K, \max_{M \times [0,1]} f)$ and $\sup_M u_0, \sup_{M} u_1$.
\end{proof}
\end{prop}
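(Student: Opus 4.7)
\textbf{Proof plan for Proposition \ref{intestprop1}.}

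The plan is to extract both estimates from the single pointwise observation that admissible solutions of $(\star_\epsilon)$ with $f>0$ are concave in $t$. Indeed, rewriting the equation as
\begin{align*}
u_{tt} = -\frac{|\nabla u_t|^2 + f}{-\Delta u + K}
\end{align*}
gives $u_{tt} \le 0$ everywhere, using admissibility and $f>0$. For the lower bound I would then invoke concavity along each line $t \mapsto u(x,t)$: since $u(x,0)=u_0(x)$ and $u(x,1)=u_1(x)$, concavity yields $u(x,t) \ge (1-t)u_0(x) + t u_1(x) \ge \min\bigl(\inf_M u_0,\inf_M u_1\bigr)$. This gives the required lower bound depending only on the boundary data.

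For the upper bound I would follow the standard barrier construction. Set $\tilde u := u - A\,P$ with $P(t)=t(1-t)$ and $A>0$ to be chosen. Combining Lemma \ref{dereqlem1} and Lemma \ref{dereqlemzed},
\begin{align*}
\mathcal{L}_u \tilde u = -2f + K\,\frac{|\nabla u_t|^2 + f}{-\Delta u + K} + 2A(-\Delta u + K) \ge -2f + 2A(-\Delta \tilde u + K),
\end{align*}
where I used $K>0$ on $\Gpos$ to drop the middle term, and the fact that $P$ is spatially constant so $\Delta \tilde u = \Delta u$. Now I would apply the elliptic maximum principle to $\tilde u$. By Lemma \ref{ellipticLemma}, $\mathcal{L}_u$ is strictly elliptic (since $f>0$), and it has no zeroth order term. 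At any interior spacetime maximum $(x_0,t_0)$ of $\tilde u$, the full spacetime Hessian of $\tilde u$ is negative semidefinite, so $\mathcal{L}_u \tilde u \le 0$ at that point; in particular $-\Delta \tilde u (x_0,t_0) \ge 0$, hence $-\Delta u(x_0,t_0) + K \ge \min_M K$. Then
\begin{align*}
0 \ge \mathcal{L}_u \tilde u(x_0,t_0) \ge -2f(x_0,t_0) + 2A \min_M K,
\end{align*}
which is a contradiction once $A$ is chosen larger than $\sup_{M\times[0,1]} f / \min_M K$ (recall $f=\epsilon f_0$ with $f_0$ determined by the boundary data, so this choice depends only on $u_0,u_1$ and $\min_M K$, and is uniform in $\epsilon\in(0,1]$). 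Hence $\tilde u$ attains its maximum on $M \times \{0,1\}$, where $P$ vanishes, giving $\sup_{M\times[0,1]} u \le A/4 + \max(\sup_M u_0,\sup_M u_1)$.

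The only substantive point to verify is the elliptic maximum principle at the max: since $\mathcal{L}_u$ has mixed second-order terms $2\langle \nabla u_t,\nabla \phi_t\rangle$, one must note that the principal symbol forms a positive definite matrix (this is exactly the content of Lemma \ref{ellipticLemma}), so that tracing against the negative semidefinite spacetime Hessian of $\tilde u$ at an interior maximum yields $\mathcal{L}_u\tilde u(x_0,t_0)\le 0$ as above. Combining the upper and lower bounds gives the desired estimate with $C$ depending only on the boundary data $u_0,u_1$ and $\min_M K$, and crucially \emph{independent of} $\epsilon\in(0,1]$, which is what will be needed for the subsequent passage $\epsilon \to 0$.
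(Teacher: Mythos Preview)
Your proof is correct and follows essentially the same approach as the paper: the concavity $u_{tt}\le 0$ for the lower bound, and the barrier $\tilde u = u - At(1-t)$ together with $\mathcal{L}_u\tilde u \ge -2f + 2A(-\Delta\tilde u + K)$ for the upper bound. Your added justification of the maximum principle via the positive-definite symbol of $\mathcal{L}_u$ (Lemma~\ref{ellipticLemma}) is a welcome clarification, but the argument is otherwise the same.
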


\subsection{\texorpdfstring{$C^1$}{C1}-estimates}

\begin{prop} \label{boundaryestprop1} If $u$ is an admissible
solution of $(\star_{\epsilon})$, then there is a constant $C$ (depending on $u_0$ and $u_1$)  such that
\begin{align*}
\sup_{M \times [0,1] } \brs{u_t} \leq C.
\end{align*}

\begin{proof} Since $u_{tt} \leq 0$, it suffices to prove an upper bound for $u_t$ at $t = 0$ and a lower bound for $u_t$ at $t=1$.

To establish an upper bound at $t = 0$ we will construct a supersolution to the equation.  Let
\begin{align} \label{suprw}
w = w(x,t) = u(x,t) - u_0(x) - A t(1-t) - B t
\end{align}
where $A, B > 0$ are constants to be specified later.  Note by the definition of $\mathcal{L}_u$,
\begin{align*}
\mathcal{L}_u u_0 = \frac{|\N u_t|^2 + f}{(-\Delta u + K)} \Delta u_0.
\end{align*}
Therefore, by Lemmas \ref{dereqlemzed} and \ref{dereqlem1}
\begin{align*}
\mathcal{L}_u w = - 2f + \frac{|\N u_t|^2 + f}{(-\Delta u + K)} (K - \Delta u_0) + 2A(-\Delta u + K).
\end{align*}
Since $u_0$ is admissible,
\begin{align*}
\kappa_0 = \min_{M} (K - \Delta u_0) > 0,
\end{align*}
and it follows that
\begin{align*}
 \mathcal{L}_u w \geq - 2f + \frac{\kappa_0 f}{(-\Delta u + K)}  + 2A(-\Delta u + K).
 \end{align*}
Also,
\begin{align*}
2 \sqrt{2A \kappa_0 f} \leq  \frac{\kappa_0 f}{(-\Delta u + K)}  + 2A(-\Delta u + K),
\end{align*}
hence
\begin{align*}
 \mathcal{L}_u w &\geq - 2f + 2 \sqrt{2A \kappa_0 f}  \\
 &= 2 \sqrt{f} \big( - \sqrt{f}  +  \sqrt{2A \kappa_0} \big).
 \end{align*}
In particular, if we choose $A > 0$ large enough depending on $\max_M f$ and $\kappa_0$, then $\mathcal{L}_u w \geq 0$, and by the maximum principle $w$ cannot have an interior maximum.
Notice $w(x,0) = 0$, while
\begin{align*}
w(x,1) = u_1(x) - u_0(x) - B.
\end{align*}
Therefore, if we choose $B$ large enough (depending on $\max_M u_1$ and $\min_M u_0$), we can arrange so that $w(x,1) \leq 0$, and consequently
\begin{align*}
\max_{ M \times [0,1] } w \leq 0.
\end{align*}
Therefore, for $t > 0$,
\begin{align*}
\frac{ w(x,t) }{t} = \frac{u(x,t) - u_0(x) - A t(1-t) - Bt }{t} \leq 0,
\end{align*}
and letting $t \rightarrow 0^{+}$ we conclude
\begin{align*}
u_t(x,0) \leq (A + B).
\end{align*}

To prove a lower bound for $u_t$ at $t=1$, we define
\begin{align*}
\tilde{w}(x,t) = u_1(x) - u(x,t) + At (1-t) + Bt,
\end{align*}
and the argument proceeds as before.
\end{proof}
\end{prop}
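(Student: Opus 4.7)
The plan hinges on first noting that $(\star_{\epsilon})$ together with admissibility forces the sign
\[
u_{tt} \;=\; -\,\frac{|\nabla u_t|^2 + f}{-\Delta u + K} \;\leq\; 0,
\]
since $f > 0$ and $-\Delta u + K > 0$ on $M \times [0,1]$. Hence $u_t(x,\cdot)$ is non-increasing in $t$ pointwise in $x$, and on $M\times[0,1]$ one has $\inf_M u_t(\cdot,1) \leq u_t \leq \sup_M u_t(\cdot,0)$. It therefore suffices to produce an upper bound for $u_t$ at $t=0$ and a lower bound at $t=1$; the full $C^1$ bound then follows by monotonicity.

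For the upper bound at $t = 0$, I would run a maximum principle argument with the barrier
\[
w(x,t) \;=\; u(x,t) - u_0(x) - A\,t(1-t) - B\,t,
\]
with $A, B > 0$ to be chosen. Combining Lemma \ref{dereqlem1}, Lemma \ref{dereqlemzed}, and the identity $\mathcal{L}_u u_0 = -u_{tt}\,\Delta u_0 = \Delta u_0 \cdot (|\nabla u_t|^2 + f)/(-\Delta u + K)$ (immediate from the definition of $\mathcal{L}_u$ and the equation), a short linear calculation gives
\[
\mathcal{L}_u w \;=\; -2f \;+\; (K - \Delta u_0)\,\frac{|\nabla u_t|^2 + f}{-\Delta u + K} \;+\; 2A(-\Delta u + K).
\]
Admissibility of $u_0$ supplies $\kappa_0 := \min_M(K-\Delta u_0) > 0$; discarding the $|\nabla u_t|^2$-contribution and applying AM--GM to the positive terms $\kappa_0 f /(-\Delta u + K)$ and $2A(-\Delta u + K)$ yields $\mathcal{L}_u w \geq -2f + 2\sqrt{2A\kappa_0 f}\geq 0$ once $A$ is chosen large in terms of $\kappa_0$ and $\max_{M\times[0,1]} f$. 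Ellipticity of $\mathcal{L}_u$ (Lemma \ref{ellipticLemma}) then forces the maximum of $w$ to lie on $\partial(M\times[0,1])$; since $w(\cdot,0) \equiv 0$ and $w(\cdot,1) = u_1 - u_0 - B$ can be arranged $\leq 0$ by taking $B$ large in terms of $\sup_M(u_1 - u_0)$, we conclude $w \leq 0$ throughout. Dividing by $t > 0$ and letting $t \to 0^+$ yields $u_t(\cdot,0) \leq A + B$.

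The lower bound at $t = 1$ follows from the mirror barrier
\[
\tilde w(x,t) \;=\; u(x,t) - u_1(x) - A\,t(1-t) - B\,(1-t),
\]
with $\tilde w(\cdot,1) \equiv 0$ and $\tilde w(\cdot,0) = u_0 - u_1 - B$. The computation is structurally identical after swapping $u_0$ for $u_1$ and using $\kappa_1 := \min_M(K - \Delta u_1) > 0$; one again obtains $\mathcal{L}_u \tilde w \geq 0$ and $\tilde w \leq 0$ throughout $M \times [0,1]$. Dividing by $(1-t) > 0$ and passing to $t \to 1^-$ delivers $u_t(\cdot,1) \geq -(A+B)$.

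The main point to check, which is where the argument could plausibly break down, is that the constants $A, B$ can be chosen \emph{independently} of $\epsilon \in (0,1]$: this is exactly what makes the estimate useful for the subsequent continuity and limiting argument. Because $f = \epsilon f_0$ with $f_0$ determined only by the boundary data, $\max f \leq \max f_0$ uniformly in $\epsilon$, so the AM--GM threshold on $A$ depends only on $\kappa_0, \kappa_1$, and the fixed $f_0$. No further obstacle is expected; the argument is essentially a standard barrier construction for a fully nonlinear equation, relying crucially on the sign $u_{tt} \leq 0$ and the linear dependence of the barriers in $t$.
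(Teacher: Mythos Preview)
Your proposal is correct and follows essentially the same approach as the paper: the same concavity observation $u_{tt}\leq 0$, the same barrier $w = u - u_0 - At(1-t) - Bt$, the same computation of $\mathcal{L}_u w$ via Lemmas \ref{dereqlemzed} and \ref{dereqlem1}, and the same AM--GM step to force $\mathcal{L}_u w \geq 0$. Your mirror barrier $\tilde w = u - u_1 - At(1-t) - B(1-t)$ differs cosmetically from the paper's $\tilde w = u_1 - u + At(1-t) + Bt$, but the two are equivalent up to sign and the argument is identical; your explicit remark that $A,B$ are independent of $\epsilon$ is a useful addition.
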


\begin{prop} \label{intestDu} If $u$ is an admissible solution
to $(\star_{\epsilon})$, then there is a constant $C$ depending on the boundary data $u_0, u_1$ and $\min_M K$, such that
\begin{align*}
\sup_{M \times [0,1]} |\N u|^2 \leq C.
\end{align*}

\begin{proof}  If the supremum of $|\N u|$ is attained when $t = 0$ or $t=1$ then we are done.  Therefore, assume $|\N u|$ attains an interior maximum.  Let $\Lambda > 0$ (to be chosen
later) and define
\begin{align} \label{Dutest}
\psi = |\N u|^2 - \Lambda t(1-t).
\end{align}
By Lemmas \ref{dereqlemzed} and \ref{dereqlem25},
\begin{align*}
\mathcal{L}_u \psi \geq  \left( 2 K|\N u|^2 - 2 \langle \N K, \N u \rangle \right) \frac{|\N u_t|^2 + f}{(-\Delta u + K)} - 2 \langle \N u , \N f \rangle + 2 \Lambda (-\Delta u + K).
\end{align*}
We may assume that $|\N u|$ is large enough so that
\begin{align*}
 2 K|\N u|^2 - 2 \langle \N K, \N u \rangle \geq K |\N u|^2,
 \end{align*}
hence
\begin{align*}
\mathcal{L}_u \psi &\geq   K|\N u|^2  \frac{|\N u_t|^2 + f}{(-\Delta u + K)} - 2 \langle \N u , \N f \rangle + 2 \Lambda (-\Delta u + K) \\
&\geq \frac{K f |\N u|^2 }{(-\Delta u + K)} - 2 \langle \N u , \N f \rangle + 2 \Lambda (-\Delta u + K).
\end{align*}
Since
\begin{align*}
2 \langle \N f, \N u \rangle \leq  \frac{K f |\N u|^2 }{(-\Delta u + K)} + \frac{1}{K} \frac{|\N f|^2}{f} ( -\Delta u + K),
\end{align*}
we have
\begin{align*}
\mathcal{L}_u \psi &\geq  \left( 2 \Lambda - \frac{1}{K} \frac{|\N f|^2}{f} \right) (-\Delta u + K).
\end{align*}
Choosing $\Lambda >> 1$ large enough (depending on $\max_{M \times [0,1]} |\N f|^2/f$ and $\min_M K$) we can arrange so that
\begin{align*}
\mathcal{L}_u \psi \geq 0,
\end{align*}
hence by the maximum principle $\psi$ cannot have an interior max.  Since $\psi = |\N u|^2$ on $M \times \{0,1\}$, we conclude
\begin{align*}
\sup_{M \times [0,1]} \psi = \sup_{M \times \{ 0,1 \}} \psi = \sup_{M \times \{ 0,1 \}} |\N u|^2.
\end{align*}
\end{proof}
\end{prop}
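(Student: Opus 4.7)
The plan is to prove an upper bound on $|\N u|^2$ by a maximum principle argument applied to an auxiliary function, exploiting the machinery of derived equations built up in the preceding subsection. Since the boundary data $u_0, u_1$ bound $|\N u|^2$ on $M \times \{0,1\}$, it suffices to rule out an interior maximum for a suitably modified test function. The naive candidate $|\N u|^2$ is not good enough: by Lemma \ref{dereqlem25},
\begin{align*}
\mathcal{L}_u |\N u|^2 \geq \bigl(2K|\N u|^2 - 2\langle \N K, \N u \rangle\bigr)\frac{|\N u_t|^2 + f}{-\Delta u + K} - 2\langle \N u, \N f \rangle,
\end{align*}
and while the first term is positive for $|\N u|$ large (since $K > 0$), the last term has no sign and cannot be absorbed without knowing something about $-\Delta u + K$ at the interior max. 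This is the main obstacle.

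To overcome it, I would introduce the barrier-perturbed test function
\begin{align*}
\psi = |\N u|^2 - \Lambda\, t(1-t)
\end{align*}
for a large constant $\Lambda > 0$ to be chosen. By Lemma \ref{dereqlemzed}, $\mathcal{L}_u\bigl(-\Lambda t(1-t)\bigr) = +2\Lambda(-\Delta u + K)$, producing a genuinely positive term that is \emph{linear} in the dangerous factor $-\Delta u + K$, which is precisely what one needs in order to absorb the error terms using a weighted Cauchy-Schwarz.

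Concretely, at an interior maximum of $\psi$ (so in particular $|\N u|$ is as large as we like, in which case $2K|\N u|^2 - 2\langle \N K,\N u\rangle \geq K|\N u|^2$), dropping the non-negative $|\N u_t|^2$ contribution from the positive term gives
\begin{align*}
\mathcal{L}_u \psi \geq \frac{Kf|\N u|^2}{-\Delta u + K} - 2\langle \N u, \N f \rangle + 2\Lambda(-\Delta u + K).
\end{align*}
The calibration to make now is the Cauchy-Schwarz split of $2\langle \N f, \N u\rangle$, chosen so that the $|\N u|^2$ piece exactly cancels the first term on the right while the remainder takes the form $\tfrac{1}{K}\tfrac{|\N f|^2}{f}(-\Delta u + K)$, leaving
\begin{align*}
\mathcal{L}_u \psi \geq \Bigl(2\Lambda - \tfrac{1}{K}\tfrac{|\N f|^2}{f}\Bigr)(-\Delta u + K).
\end{align*}
At this point the specific bound (\ref{fbounds}), which is available because $f = \epsilon f_0$ with $f_0$ explicit in terms of boundary data, together with the hypothesis $\min_M K > 0$, makes $|\N f|^2/f$ and $1/K$ both a priori controlled. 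Choosing $\Lambda$ large in terms of these gives $\mathcal{L}_u \psi \geq 0$, and Lemma \ref{ellipticLemma} ensures that $\mathcal{L}_u$ is elliptic on admissible $u$, so the maximum principle forbids $\psi$ from attaining an interior maximum. Hence $\sup_{M \times [0,1]} \psi$ is realized on $t=0,1$, where $\psi = |\N u|^2$ is bounded by the data, completing the estimate. The delicate step is the weighted Cauchy-Schwarz balancing; everything else is a straightforward application of the derived equations.
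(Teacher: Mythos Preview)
Your proposal is correct and follows essentially the same argument as the paper: the same test function $\psi = |\N u|^2 - \Lambda t(1-t)$, the same reduction via Lemmas \ref{dereqlemzed} and \ref{dereqlem25}, the same weighted Cauchy--Schwarz split of $2\langle \N f, \N u\rangle$, and the same choice of $\Lambda$ in terms of $|\N f|^2/f$ and $\min_M K$. The only cosmetic difference is that you explicitly invoke Lemma \ref{ellipticLemma} for ellipticity and the bound (\ref{fbounds}), which the paper leaves implicit.
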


\subsection{\texorpdfstring{$C^2$}{C2}-estimates}

\begin{prop} \label{intestprop3} If $u$ is an admissible solution
to $(\star_{\epsilon})$, there exists a constant $C$ such that
\begin{align*}
\sup_{M \times [0,1]} |\gD u| \leq  C.
\end{align*}
\begin{proof}  Let $Q = -\Delta u + K$; then it suffices to prove a bound for $\sup_M |Q|$.  Since $u$ is admissible, $Q > 0$ and so we only need to
prove an upper bound for $Q$. By Lemma \ref{dereqlem3},
\begin{align} \label{LQ} \begin{split}
\mathcal{L}_u Q &= - 2 u_{tt} \frac{ |\N Q |^2}{Q} - 4 \frac{ \N_i \N_k u_t \N_k u_t \N_i Q }{Q} - 2 \frac{\langle \N f, \N Q \rangle }{Q} \\
&\ \ \ + 2 |\N^2 u_t|^2 + 2 K |\N u_t|^2 + \Delta f \\
&\geq - 2 u_{tt} \frac{ |\N Q |^2}{Q} - 4 \frac{ \N_i \N_k u_t \N_k u_t \N_i Q }{Q} - 2 \frac{\langle \N f, \N Q \rangle }{Q} + \Delta f.
\end{split}
\end{align}
Let
\begin{align*}
W = Q - \Lambda t (1-t),
\end{align*}
where $\Lambda > 0$ will be chosen.  Then
\begin{align} \label{LW} \begin{split}
\mathcal{L}_u W \geq - 2 u_{tt} \frac{ |\N W |^2}{Q} - 4 \frac{ \N_i \N_k u_t \N_k u_t \N_i W }{Q} - 2 \frac{\langle \N f, \N W \rangle }{Q} + \Delta f + 2 \Lambda Q.
\end{split}
\end{align}
If $Q$ attains its maximum when $t = 0$ or $t=1$, then we are done.  Therefore, assume $Q$ has an interior maximum, and furthermore that $\max_{M \times [0,1]} Q > 1$.  Then at the
point where $Q$ attains its maximum, $\N Q = 0$ and (\ref{LW}) becomes
\begin{align*}
\mathcal{L}_u W &\geq \Delta f + 2 \Lambda Q \\
&\geq \Delta f + 2 \Lambda.
\end{align*}
However, if $\Lambda > 0$ is chosen larger than $\min_{M \times [0,1]} \Delta f$, then we have $\mathcal{L}_u W > 0$, a contradiction.
\end{proof}
\end{prop}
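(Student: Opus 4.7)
The plan is to bound $\Delta u$ by setting $Q := -\Delta u + K$, which is strictly positive on $M \times [0,1]$ by admissibility, and showing that $Q$ admits an upper bound depending only on the boundary data, $K$, and the regularization. Since $K$ is fixed, controlling $Q$ from above controls $|\Delta u|$. The natural strategy is a maximum-principle argument on $Q$ using the linearized operator $\mathcal{L}_u$, which is elliptic throughout $M\times[0,1]$ because $f = \epsilon f_0 > 0$, by Lemma \ref{ellipticLemma}.

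I would first derive an evolution equation for $Q$ under $\mathcal{L}_u$. Lemma \ref{dereqlem3} gives a formula for $\mathcal{L}_u \Delta u$, and since $K = K(x)$ depends only on $x$ one checks directly that $\mathcal{L}_u K = -u_{tt}\Delta K$. Combining these, $\mathcal{L}_u Q$ decomposes into (i) the manifestly nonnegative terms $2|\N^2 u_t|^2 + 2K|\N u_t|^2$, (ii) the inhomogeneous term $\Delta f$ (of indeterminate sign), and (iii) gradient terms proportional to $\N Q$. Dropping the good terms yields $\mathcal{L}_u Q \geq \Delta f + (\text{gradient terms in }\N Q)$.

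Because the sign of $\Delta f$ is not controlled, the next step is to introduce the barrier $W := Q - \Lambda\, t(1-t)$ with $\Lambda > 0$ to be chosen. By Lemma \ref{dereqlemzed} one has $\mathcal{L}_u(-\Lambda t(1-t)) = 2\Lambda Q$, so $\mathcal{L}_u W \geq \Delta f + 2\Lambda Q + (\text{gradient terms in }\N Q)$. If $W$ attains its supremum on $\{t=0\}\cup \{t=1\}$ we are done, because there $W = Q$ is controlled by $-\Delta u_0 + K$ and $-\Delta u_1 + K$. Otherwise $W$ attains its maximum at an interior point; there $\N W = \N Q = 0$, killing the gradient terms, so $\mathcal{L}_u W \geq \Delta f + 2\Lambda Q$. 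Assuming without loss of generality that $Q \geq 1$ at this max (else $Q$ is already bounded), this is at least $\Delta f + 2\Lambda$, and choosing $\Lambda$ larger than $-\tfrac{1}{2}\min_{M\times[0,1]}\Delta f$ makes it strictly positive. This contradicts $\mathcal{L}_u W \leq 0$, which holds at an interior maximum because $\mathcal{L}_u$ is elliptic.

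The main technical point to be careful about is that $\mathcal{L}_u$ contains the cross-derivative term $2\langle \N u_t, \N \phi_t\rangle$, so the operator is genuinely non-diagonal and the interior-maximum test is non-obvious; it is only licensed by Lemma \ref{ellipticLemma}. The other delicate aspect is that the gradient-of-$Q$ terms in the expression for $\mathcal{L}_u Q$ are not signed, but they are purely spatial; since the barrier $-\Lambda t(1-t)$ depends on $t$ alone, we have $\N W = \N Q$, which is exactly what makes them vanish at an interior max of $W$ and allows the contradiction to close.
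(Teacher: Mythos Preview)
Your proposal is correct and follows essentially the same approach as the paper's own proof: define $Q=-\Delta u+K$, compute $\mathcal{L}_u Q$ from Lemma~\ref{dereqlem3} (with the $u_{tt}\Delta K$ terms cancelling against $\mathcal{L}_u K$), drop the nonnegative pieces, add the barrier $-\Lambda t(1-t)$ via Lemma~\ref{dereqlemzed}, and reach a contradiction at an interior maximum. Your write-up is in fact slightly more careful than the paper's in distinguishing the maximum of $W$ from the maximum of $Q$ and in explicitly invoking Lemma~\ref{ellipticLemma} to justify $\mathcal{L}_u W\leq 0$ at an interior max.
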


\begin{prop} \label{intestutt} If $u$ is an admissible solution
to $(\star_{\epsilon})$, then there is a constant $C$ such that
\begin{align*}
\sup_{M \times [0,1]} |u_{tt}| = \sup_{M \times \{ 0,1\}} |u_{tt}|.
\end{align*}
\end{prop}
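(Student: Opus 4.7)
The plan is to reduce the claim to a maximum principle for a strictly elliptic linear operator applied to $u_{tt}$, exploiting the fact that the specific regularization $f = \epsilon f_0$ is affine in $t$.

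First, I would use the observation made at the start of the proof of Proposition \ref{intestprop1} that $(\star_\epsilon)$ together with $f>0$ forces $u_{tt} = -(|\nabla u_t|^2 + f)/(-\Delta u + K) < 0$. Thus $|u_{tt}| = -u_{tt}$, and it suffices to show that $u_{tt}$ attains its infimum on the boundary $M\times\{0,1\}$.

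The main computation is then to invoke Lemma \ref{dereqlem5}, which gives $\mathcal{L}_u u_{tt} = 2 u_{ttt}\Delta u_t - 2|\nabla u_{tt}|^2 - f_{tt}$. The special choice of $f = \epsilon f_0$ with $f_0$ affine in $t$, recorded in (\ref{fttz}), makes $f_{tt} \equiv 0$. Writing $u_{ttt} = (u_{tt})_t$ and moving the mixed term to the left, I obtain
\begin{equation*}
\mathcal{L}_u u_{tt} - 2(\Delta u_t)\,(u_{tt})_t = -2|\nabla u_{tt}|^2 \le 0.
\end{equation*}
The left-hand side is $\tilde{\mathcal{L}} u_{tt}$ where $\tilde{\mathcal{L}}\phi := \mathcal{L}_u\phi - 2(\Delta u_t)\phi_t$; since $\tilde{\mathcal{L}}$ and $\mathcal{L}_u$ share identical principal parts, strict ellipticity of $\tilde{\mathcal{L}}$ is immediate from Lemma \ref{ellipticLemma}. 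The weak maximum (minimum) principle for such operators on the compact manifold-with-boundary $M \times [0,1]$ then forces $\min_{M \times [0,1]} u_{tt} = \min_{M\times\{0,1\}} u_{tt}$, which translates to the desired equality $\sup|u_{tt}| = \sup_{M\times\{0,1\}}|u_{tt}|$.

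The key conceptual step is to reinterpret the offending indefinite term $u_{ttt}\Delta u_t$ coming from Lemma \ref{dereqlem5} as a first-order operator $\Delta u_t \cdot \partial_t$ applied to $u_{tt}$, rather than as an inhomogeneity; this absorbs a sign-indefinite quantity into the operator and permits a clean maximum principle argument. The only real obstacle is that this trick requires $f_{tt} = 0$, which is precisely why the authors' regularization (\ref{fo}) is chosen affine in $t$; for a general $f > 0$ the same method would only yield a bound of the form $\sup|u_{tt}| \leq \sup_\partial |u_{tt}| + C$, which is strictly weaker than the equality being claimed.
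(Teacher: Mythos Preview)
Your proof is correct and follows essentially the same approach as the paper's: both use Lemma \ref{dereqlem5} together with $f_{tt}=0$ to obtain $\mathcal{L}_u u_{tt} = 2u_{ttt}\Delta u_t - 2|\nabla u_{tt}|^2$, treat $2u_{ttt}\Delta u_t = 2(\Delta u_t)(u_{tt})_t$ as a first-order term in $u_{tt}$, and apply the maximum principle. The only difference is cosmetic: the paper invokes the strong maximum principle in one sentence, while you explicitly name the modified operator $\tilde{\mathcal{L}}$ and appeal to the weak minimum principle.
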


\begin{proof}  Since $u_{tt} < 0$, we only need to estimate the infimum of $u_{tt}$.

For the proof we will use the special form of the function $f$.  Recall $f = \epsilon f_0$, where $f_0$ is
given in (\ref{fz}).  As we observed in (\ref{fttz}),
\begin{align*}
f_{tt} = 0.
\end{align*}
Therefore, by Lemma \ref{dereqlem5},
 \begin{align*}
  \mathcal{L}_u u_{tt} &= 2 u_{ttt} \Delta u_t  - 2 |\N u_{tt}|^2 - f_{tt} \\
 &= 2 u_{ttt} \Delta u_t  - 2 |\N u_{tt}|^2.
 \end{align*}
 It follows from the strong maximum principle that $u_{tt}$ cannot have an interior minimum, and the lemma follows.
\end{proof}

\begin{prop} \label{C2mixed} If $u$ is an admissible solution
to $(\star_{\epsilon})$, then there is a constant $C$ such that
\begin{align*}
\sup_{M \times \{ 0,1 \}} \big[ |u_{tt}| + |\nabla u_t| + |\nabla^2 u| \big] \leq C.
\end{align*}
\end{prop}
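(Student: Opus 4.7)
My plan is to estimate the three quantities on the slice $\{t=0\}$ in turn (the case $t=1$ is symmetric). The spatial Hessian bound is immediate: since $u(\cdot,0)\equiv u_0$, the tangential derivatives of $u$ at $t=0$ coincide pointwise with those of $u_0$, so $|\nabla^2 u|$ on $\{t=0\}$ is controlled by $\|u_0\|_{C^2}$. Moreover, once $|\nabla u_t|$ is controlled on $\{t=0\}$, the bound on $|u_{tt}|$ follows algebraically by solving $(\star_\epsilon)$:
\[
u_{tt}(x,0) = -\frac{|\nabla u_t(x,0)|^2 + \epsilon f_0(x,0)}{-\Delta u_0(x) + K(x)},
\]
where the denominator is bounded below by the admissibility constant $\delta_0 > 0$ of $u_0$. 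Thus the whole proposition reduces to bounding the mixed second derivative $|\nabla u_t|$ on $\{t=0\}$.

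For this I will use a barrier argument in the spirit of Caffarelli-Nirenberg-Spruck. Fix $x_0 \in M$ and a unit spatial vector $e$ at $x_0$, and work in a geodesic chart around $x_0$. Seek a barrier of the form
\[
\Phi_\pm(x,t) = \pm\bigl[\nabla_e u(x,t) - \nabla_e u_0(x)\bigr] - A t - B\rho(x)^2 + C(u(x,t) - \tilde u(x,t)),
\]
where $\rho(x) = d_{g_0}(x,x_0)$ and $\tilde u$ is the reference function from Lemma \ref{initLemma}. The constants $A,B,C > 0$ are chosen (in this order) so that $\Phi_\pm(x_0,0)=0$, $\Phi_\pm \leq 0$ on $M \times \{0,1\}$ (using $(u-\tilde u)|_{t=0,1}=0$ and that $A$ dominates $\sup_M|\nabla(u_1-u_0)|$), and $\mathcal{L}_u \Phi_\pm \geq 0$ in the interior. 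Granting the last, the maximum principle (ellipticity by Lemma \ref{ellipticLemma}) forces $\Phi_\pm \leq 0$ globally, and since $\Phi_\pm$ attains its maximum value $0$ at the boundary point $(x_0,0)$, the Hopf-type inequality $\partial_t \Phi_\pm(x_0,0) \geq 0$ rearranges to $\pm\nabla_e u_t(x_0,0) \leq A$, as required. Computing $\mathcal{L}_u \Phi_\pm$ uses: $\mathcal{L}_u(t) = 0$ from Lemma \ref{dereqlemzed}; $\mathcal{L}_u(h) = -u_{tt}\Delta h$ for any purely spatial $h$; Lemma \ref{dereqlem1} for $\mathcal{L}_u u$; and the identity
\[
\mathcal{L}_u(\nabla_e u) = u_{tt}\bigl([\nabla_e,\Delta]u\bigr) - u_{tt}\nabla_e K - \nabla_e f,
\]
obtained by differentiating $(\star_\epsilon)$ in the direction $e$, where the commutator is bounded by $C|\nabla u|$ via the Ricci identity on the closed surface.

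The main obstacle is the verification of $\mathcal{L}_u \Phi_\pm \geq 0$. The coefficient $-u_{tt}$ multiplying spatial second derivatives in $\mathcal{L}_u$ equals $(|\nabla u_t|^2 + \epsilon f_0)/(-\Delta u + K)$ by the equation, so it involves precisely the quantity $|\nabla u_t|$ being estimated. Without extra care, the contributions from $-u_{tt}\Delta(\nabla_e u_0)$ and $-u_{tt}\Delta\rho^2$ scale like $|\nabla u_t|^2$ rather than like bounded constants, leading to a circular dependence. The remedy is the $C(u-\tilde u)$ correction: by Lemma \ref{dereqlem1}, $\mathcal{L}_u u$ contains the term $K(|\nabla u_t|^2 + f)/(-\Delta u + K)$, which, when $C$ is chosen appropriately relative to $\min_M K$ and $B$, absorbs the bad contributions and leaves only bounded residues (controlled by \eqref{fbounds}, the admissibility lower bound $\delta_0$, and the previously established $C^1$ bound from Proposition \ref{intestDu}) that are then dominated by $A$. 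This delicate rebalancing is the essential technical content of the proof.
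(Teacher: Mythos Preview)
Your overall plan is sound and coincides with the paper's: the bound on $|\nabla^2 u|$ at $t=0$ is trivial, the bound on $|u_{tt}|$ follows algebraically from the equation once $|\nabla u_t|$ is controlled, and everything reduces to a barrier argument for the mixed derivative on $\{t=0\}$, carried out by differentiating $(\star_\epsilon)$ tangentially and adding a correction proportional to $u$ itself.

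The paper's barrier, however, differs from yours in ways that matter. It takes
\[
\Psi \;=\; |\nabla(u-u_0)| \;+\; A\,(u-u_0) \;-\; B\,t(1-t)\qquad\text{on } M\times[0,\tau],
\]
with $\tau$ small. The correction $A(u-u_0)$ is cleaner than your $C(u-\tilde u)$: since $u_0$ is time-independent, $\mathcal L_u[A(u-u_0)] = -2Af + A(-u_{tt})(K-\Delta u_0)\geq -2Af + A\delta_0(-u_{tt})$, with no cross term in $\nabla u_t$. Your choice $u-\tilde u$ forces $\mathcal L_u(-C\tilde u)$ to carry the term $-2C\langle\nabla u_t,\nabla(u_1-u_0)\rangle$, which reinserts the very quantity you are estimating and must itself be absorbed; you do not address this. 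The paper also uses $-Bt(1-t)$, contributing $+2B(-\Delta u+K)$ to $\mathcal L_u\Psi$, whereas your $-B\rho^2$ contributes a further \emph{bad} term $\approx -4B(-u_{tt})$ (and has cut-locus issues).

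There is a genuine gap in your last paragraph. You write that after choosing $C$, the ``bounded residues\dots are then dominated by $A$.'' But $\mathcal L_u(-At)=0$ by Lemma~\ref{dereqlemzed}, so $A$ contributes nothing whatsoever to $\mathcal L_u\Phi_\pm$; its only role is to make $\Phi_\pm\le 0$ on $\{t=1\}$. The residual terms therefore cannot be killed by $A$. In the paper the mechanism is different and essential: the good $(-u_{tt})$ term from the correction and the good $(-\Delta u+K)$ term from $-Bt(1-t)$ are combined via the arithmetic--geometric mean inequality,
\[
\tfrac12 A\delta_0(-u_{tt}) + 2B(-\Delta u+K)\;\ge\; 2\sqrt{\delta_0 AB}\,\sqrt{|\nabla u_t|^2+f}\;\ge\;2\sqrt{\delta_0 AB\,f},
\]
and this beats the remaining $-3Af$ residue \emph{only because $f=\epsilon f_0$ is small}. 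The smallness of $\epsilon$ is the decisive point, and your sketch never invokes it. Without that step (or an equivalent one exploiting the $2CA_0(-\Delta u+K)$ term hidden in $\mathcal L_u(-C\tilde u)$ together with $\epsilon\ll 1$), the inequality $\mathcal L_u\Phi_\pm\ge 0$ does not close.
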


\begin{proof}  Observe that a bound for $|\nabla^2 u|$ on the boundary is immediate.  If we can prove a bound on the `mixed' term $|\nabla u_t|$, then restricting
to equation to $t = 0$ we have
\begin{align*}
u_{tt}(\cdot,0) \big( -\Delta u_0 + K \big) + |\nabla u_t(\cdot,0)|^2 + f = 0,
\end{align*}
hence
\begin{align*}
\delta_0 \sup_M \big( -u_{tt}(\cdot,0) \big) \leq \sup_M |\nabla u_t(\cdot,0)|^2 + C,
\end{align*}
where
\begin{align*}
\delta_0 = \min_M \big( -\Delta u_0 + K \big) > 0.
\end{align*}
Therefore, a bound on $|\nabla u_t|$ implies a bound on $|u_{tt}|$.

To prove a bound on $\nabla u_t$ we consider the following auxiliary function
$\Psi : M \times [0,\tau] \rightarrow \mathbb{R}$, where $0 < \tau < 1$ will be chosen later:
\begin{align} \label{Psidef}
\Psi = |\nabla(u-u_0)| + A (u-u_0) - Bt(1-t),
\end{align}
where $A,B > 0$ are to be determined.

We claim that by choosing $A,B >> 0$ large enough and $\tau > 0$ small enough, that
$\Psi$ attains a non-positive maximum on the boundary of of $M \times [0,\tau]$.  Assuming for
the moment this is true, let us see how a bound for $\N u_t$ follows.

Choose a point $x_0 \in M$, and a unit tangent vector $X \in T_{x_0}M$.  Let $\{ x^i \}$ be a local coordinate system
with $X = \frac{\partial}{\partial x^1}$ at $x_0$.  Then
\begin{align*}
&\frac{\partial}{\partial x^1}\big( u(x,t) - u_0(x) \big) + A (u(x,t) - u_0(x)) - Bt(1-t) \\
& \quad \leq  |\nabla(u-u_0)(x,t)| + A (u-u_0)(x,t) - Bt(1-t)  \\
& \quad \leq 0.
\end{align*}
Therefore,
\begin{align*}
0 &\geq \lim_{t \to 0+} \frac{1}{t} \Big\{ \frac{\partial}{\partial x^1}u(x,t) - \frac{\partial}{\partial x^1}u_0(x) + A (u(x,t) - u_0(x)) - Bt(1-t) \Big\} \\
&= \frac{\partial}{\partial x^1}u_t(x_0,0) + A u_t(x_0,0) - B.
\end{align*}
Since $u_t$ is bounded, an upper bound on $\frac{\partial}{\partial x^1}u_t$ follows.  Since $X = \frac{\partial }{\partial x^1}$ was arbitrary, we obtain a bound on $|\N u_t(x,0)|$.

To see that such a choice of $A,B,$ and $\tau$ are possible, we first note that
\begin{align} \label{Pzb}
\Psi(x,0) = 0.
\end{align}
Since $|\N u|$ is bounded,
\begin{align*}
\Psi(x,\tau) &= |\nabla  u(x,\tau) - \nabla  u_0(x)|   + A  \big( u(x,\tau) - u_0(x) \big) - B \tau (1-\tau) \\
&\leq C_1 + A \vert u(x,\tau) - u_0(x) \vert -  B \tau (1-\tau).
\end{align*}
Since $|u_t|$ is also bounded,
\begin{align*}
A \vert u(x,\tau) - u_0(x) \vert \leq C_2 A \tau,
\end{align*}
hence if $0 < \tau < 1/2$,
\begin{align*}
\Psi(x,\tau) &\leq C_1 + C_2 A \tau -  B \tau (1-\tau) \\
&\leq C_1 + (C_2 A -  \frac{B}{2} ) \tau.
\end{align*}
Therefore, if $B$ is chosen large enough (depending on $\tau$, $A$, $C_1$, and $C_2$), then
\begin{align} \label{Ptb}
\Psi(x,\tau) \leq 0.
\end{align}
We conclude that $\Psi \leq 0$ on $\partial \big( M \times [0,\tau] \big)$.

Assume the maximum of $\Psi$ is attained at a point $(x_0,t_0)$ which is interior (i.e., $0 < t_0 < \tau$).  Let
\begin{align} \label{etadef}
\eta = \frac{\displaystyle \N (u-u_0)(x_0,t_0)}{ \displaystyle | \N (u-u_0)(x_0,t_0)|}.
\end{align}
We can extend $\eta$ locally via parallel transport along radial geodesics based at $x_0$. By construction,
\begin{align} \label{pareta} \begin{split}
\N \eta (x_0) &= 0, \\
|\N^2 \eta (x_0)| &\leq C(g).
\end{split}
\end{align}
By using a cut-off function, we can assume $\eta$ is globally defined and satisfies
\begin{align*}
|\eta| \leq 1,
\end{align*}
with $|\eta| = 1$ in a neighborhood of $x_0$.

Define
\begin{align} \label{Hdef}
H = \ea \N_{\alpha} (u-u_0) + A(u-u_0) - Bt(1-t).
\end{align}
Since $|\eta| \leq 1$,
\begin{align} \label{Hless}
H(x,t) \leq \Psi(x,t),
\end{align}
and the max of $H$ is attained at $(x_0,t_0)$.  Therefore,
\begin{align} \label{MP1}
\mathcal{L}_u H (x_0,t_0) \leq 0.
\end{align}

To compute $\mathcal{L}_u H (x_0,t_0)$, let $\phi = \ea \N_{\alpha} (u-u_0).$  Using (\ref{pareta}), at $(x_0,t_0)$ we have
\begin{align} \label{phis} \begin{split}
\phi_t &= \ea \N_{\alpha} u_t, \\
\phi_{tt} &= \ea \N_{\alpha} u_{tt}, \\
\nabla_k \phi_t &= \ea \N_k \N_{\alpha} u_t.
\end{split}
\end{align}
Also, commuting derivatives and using the fact that $\N \eta(x_0) = 0$, we have
\begin{align*}
\Delta \phi &= \ea \Delta \N_{\alpha} (u - u_0)  + \Delta \ea \N_{\alpha}(u-u_0) \\
&= \ea \N_{\alpha} (\Delta u) - \ea  \N_{\alpha} (\Delta u_0) + K \ea \N_{\alpha} (u-u_0) + \Delta \ea \N_{\alpha} (u-u_0) \\
&\geq \ea \N_{\alpha} (\Delta u) -  C,
\end{align*}
where in the last line we have used (\ref{pareta}) and the fact that $|\N u|$ is bounded.
Combining the above, we have
\begin{align} \label{Lp1} \begin{split}
\mathcal{L}_u \phi &= \phi_{tt} (-\Delta u + K) - u_{tt} \Delta \phi + 2 \langle \N u_t , \N \phi_t \rangle \\
&\geq \ea \N_{\alpha} u_{tt} (-\Delta u + K) - u_{tt} \big\{ \ea \N_{\alpha} (\Delta u) - C \big\}  \\
& \quad + 2 \ea \N_k u_t  \N_k \N_{\alpha} u_t.
\end{split}
\end{align}
Also, differentiating the equation in the direction of $\eta$,
\begin{align*}
0 &= \ea \N_{\alpha} \big\{ u_{tt} (-\Delta u + K) + |\N u_t|^2 + f \big\} \\
&= \ea \N_{\alpha} u_{tt} (-\Delta u + K) - u_{tt} \ea \N_{\alpha} (\Delta u) + u_{tt} \ea \N_{\alpha} K + 2 \ea \N_{\alpha} \N_k u_t \N_k u_t + \ea \N_{\alpha} f.
\end{align*}
Combining this with (\ref{Lp1}), we get
\begin{align} \label{Lp} \begin{split}
\mathcal{L}_u \phi &\geq (-u_{tt}) \big\{ - \ea \N_{\alpha} K- C \big\} + \ea \N_{\alpha} f \\
&\geq - C_1 (-u_{tt}) - C_2 |\N f|.
\end{split}
\end{align}
By Lemmas \ref{dereqlem1} and \ref{dereqlemzed} we conclude
\begin{align} \label{LH}
\mathcal{L}_u H \geq (-u_{tt}) \big\{ - C_1 + A (-\Delta u_0 + K) \big\} - C_2 |\N f| - 2 A f + 2 B (-\Delta u + K).
\end{align}
Let
\begin{align*}
\delta_0 = \min_M (- \Delta u_0 + K) > 0.
\end{align*}
Then if $A > 0$ is chosen large enough, the term in braces in (\ref{LH}) is bounded below by $(A/2)\delta_0$.  Also, by (\ref{fbounds}),
\begin{align*}
- C_2 |\N f| - 2 A f \geq - 3 A f,
\end{align*}
again if $A$ is large enough.  Therefore,
\begin{align} \label{LH2}
\mathcal{L}_u H \geq \frac{1}{2} A \delta_0 (-u_{tt}) -  3 A f + 2 B (-\Delta u + K).
\end{align}
By the equation,
\begin{align*}
-u_{tt} &= \frac{|\N u_t|^2 + f}{-\Delta u + K},
\end{align*}
hence
\begin{align*}
\mathcal{L}_u H &\geq \frac{1}{2} A \delta_0 \frac{|\N u_t|^2 + f}{-\Delta u + K} - 3 A f + 2 B (-\Delta u + K).
\end{align*}
Using the arithmetic-geometric mean inequality,
\begin{align} \label{AGM}
2 \big( \delta_0 AB\big)^{1/2} \sqrt{ |\N u_t|^2 + f} \leq \frac{1}{2} A \delta_0 \frac{|\N u_t|^2 + f}{-\Delta u + K} + 2 B (-\Delta u + K).
\end{align}
It follows that
\begin{align*}
0 \geq \mathcal{L}_u H &\geq 2 \big( \delta_0 AB\big)^{1/2} \sqrt{ |\N u_t|^2 + f} - 3 A f \\
&\geq 2 \big( \delta_0 AB\big)^{1/2} \sqrt{ f} - 3A f.
\end{align*}
We can assume $B$ is large enough so that $\delta_0 B > 1$.  Therefore,
\begin{align} \label{bvs}
0 \geq 2 \sqrt{ Af} - 3 A f.
\end{align}
Once $A$ is fixed, since $f = \epsilon f_0$, for $\epsilon > 0$ small enough we have $Af << 1$, hence $\sqrt{Af} > 2 Af$, which
contradicts (\ref{bvs}).  It follows that $H$ (hence $\Psi$) cannot have an interior maximum.
\end{proof}

\begin{cor} \label{uttMbound} If $u$ is an admissible solution
to $(\star_{\epsilon})$, there exists a constant $C$ such that
\begin{align*}
\sup_{M \times [0,1]} |u_{tt}| \leq  C.
\end{align*}
\end{cor}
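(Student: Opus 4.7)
The plan is to simply combine the two previous results, since together they immediately yield the global bound. By Proposition \ref{intestutt} the strong maximum principle forces the infimum of $u_{tt}$ to be attained on the parabolic boundary $M \times \{0,1\}$, and since $u_{tt} \leq 0$ (this follows directly from $(\star_\epsilon)$ together with admissibility and $f > 0$), this yields
\begin{align*}
\sup_{M \times [0,1]} |u_{tt}| = \sup_{M \times \{0,1\}} |u_{tt}|.
\end{align*}
Then Proposition \ref{C2mixed} provides a constant $C$ (depending only on the boundary data and the lower bound $\delta_0$) such that
\begin{align*}
\sup_{M \times \{0,1\}} |u_{tt}| \leq C,
\end{align*}
and chaining the two inequalities finishes the proof.

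The only point that deserves a comment is that the boundary estimate in Proposition \ref{C2mixed} is really the nontrivial ingredient: its proof rested on constructing the barrier $\Psi = |\nabla(u - u_0)| + A(u-u_0) - B t(1-t)$ and exploiting the smallness of $f = \epsilon f_0$ to rule out an interior maximum of the auxiliary function $H$. Once $|\nabla u_t|$ is controlled on the boundary, the equation $(\star_\epsilon)$ and the admissibility lower bound $-\Delta u_0 + K \geq \delta_0$ convert this into a bound on $u_{tt}|_{t=0}$, and symmetrically on $u_{tt}|_{t=1}$. So no new estimate is required here; the corollary is a formal consequence of combining the interior maximum principle for $u_{tt}$ with the already-established boundary $C^2$ bound.
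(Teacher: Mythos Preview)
Your proposal is correct and matches the paper's approach exactly: the corollary is stated without proof in the paper precisely because it is an immediate consequence of combining Proposition \ref{intestutt} (reduction to the boundary) with Proposition \ref{C2mixed} (boundary $C^2$ bound), which is what you do. One minor terminological remark: the problem is elliptic on $M\times[0,1]$, so ``boundary'' rather than ``parabolic boundary'' is the appropriate phrase.
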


\begin{cor} \label{DutMbound} If $u$ is an admissible solution
to $(\star_{\epsilon})$, there exists a constant $C$ such that
\begin{align*}
\sup_{M \times [0,1]} |\N u_t| \leq  C.
\end{align*}
\end{cor}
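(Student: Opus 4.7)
The plan is to extract the bound on $|\N u_t|$ directly from the algebraic form of $(\star_\epsilon)$, without constructing any further auxiliary function or invoking a further maximum principle argument. Rearranging $(\star_\epsilon)$ yields the pointwise identity
\begin{equation*}
|\N u_t|^2 = (-u_{tt})(-\Delta u + K) - f.
\end{equation*}
Every quantity on the right-hand side has already been controlled on $M \times [0,1]$: $|u_{tt}|$ is globally bounded by Corollary \ref{uttMbound}; the admissibility of $u$ together with Proposition \ref{intestprop3} gives a two-sided bound $0 < -\Delta u + K \leq C$; and $|f| = \epsilon |f_0| \leq C$ directly from the explicit formula (\ref{fz}) with $0 < \epsilon \leq 1$. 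Multiplying the first two bounds and combining with the bound on $f$ yields $|\N u_t|^2 \leq C$ pointwise, from which the corollary follows.

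There is therefore no substantive obstacle here beyond the estimates already in place: the main work has been carried out in Proposition \ref{intestutt} and Proposition \ref{C2mixed}, which together produce Corollary \ref{uttMbound}, and in the Laplacian bound of Proposition \ref{intestprop3}. Corollary \ref{DutMbound} is a purely algebraic consequence of those bounds together with the equation itself, and the constant $C$ inherits its dependencies (on the boundary data $u_0, u_1$, $\min_M K$, and the background metric $g_0$) from the earlier estimates.
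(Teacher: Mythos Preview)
Your proof is correct and matches the paper's own argument essentially verbatim: both rearrange $(\star_\epsilon)$ to write $|\N u_t|^2 = -u_{tt}(-\Delta u + K) - f$ and then invoke the prior bounds on $u_{tt}$, $\Delta u$, and $f$. If anything, your citation of Corollary \ref{uttMbound} for the global $u_{tt}$ bound is slightly more precise than the paper's reference to Proposition \ref{C2mixed}, which strictly speaking only controls $u_{tt}$ on the boundary.
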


\begin{proof}  From the equation,
\begin{align} \label{equt}
|\N u_t|^2 = - u_{tt} (-\Delta u + K ) - f.
\end{align}
By Propositions \ref{intestprop3} and \ref{C2mixed}, the right-hand side is uniformly bounded, and a
bound for $|\N u_t|$ follows.
\end{proof}

Next, we give an estimate for the Hessian for solutions to $(\star_{\epsilon})$.

\begin{prop} \label{C2Hess} If $u$ is an admissible solution
to $(\star_{\epsilon})$, then there is a constant $C$ depending on the initial data of $u$ such that
\begin{align*}
\sup_{M \times [0,1]} |\nabla^2 u|  \leq C
\end{align*}
\end{prop}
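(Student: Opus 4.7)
In two dimensions the sum of eigenvalues of $\nabla^2 u$ equals $\Delta u$, which Proposition \ref{intestprop3} already bounds. Hence it suffices to bound the largest eigenvalue $\lambda_{\max}(\nabla^2 u)$ from above: once this is done, $\lambda_{\min} = \Delta u - \lambda_{\max}$ is bounded below and $|\nabla^2 u|^2 = \lambda_{\max}^2 + \lambda_{\min}^2 \leq C$. On the parabolic boundary $M \times \{0,1\}$ the estimate is immediate from the smoothness of $u_0, u_1$, so only the interior bound remains.

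My plan is a maximum-principle argument on an auxiliary function of the form
\begin{align*}
W(x,t) = \log \lambda_{\max}(\nabla^2 u)(x,t) + A\,|\nabla u|^2 - B\, t(1-t),
\end{align*}
with $A, B > 0$ to be chosen large. Since $\lambda_{\max}$ is only Lipschitz, at a putative interior maximum $(x_0, t_0)$ I would regularize as in the proof of Proposition \ref{C2mixed}: replace $\lambda_{\max}$ by $\eta^i \eta^j \nabla_i \nabla_j u$, where $\eta$ is the parallel transport along radial $g_0$-geodesics of the unit eigenvector of $\lambda_{\max}$ at $(x_0,t_0)$. This gives $\nabla \eta(x_0) = 0$ with $|\nabla^2 \eta(x_0)| \leq C(g_0)$, and the regularized test function $H$ satisfies $H \leq W$ with equality at $(x_0,t_0)$, so $\mathcal{L}_u H(x_0,t_0) \leq 0$ by Lemma \ref{ellipticLemma}.

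To evaluate $\mathcal{L}_u(\eta^i\eta^j \nabla_i \nabla_j u)$ I would differentiate $(\star_\epsilon)$ twice in the direction of $\eta$ and solve for the time-time component. Modulo bounded commutator terms from the $g_0$-curvature, the resulting expression contains a beneficial negative term $-2|\nabla \nabla_\eta u_t|^2$ arising from the second derivative of $|\nabla u_t|^2$, together with a problematic cross term $-2(\nabla_\eta u_{tt})\bigl(\nabla_\eta(-\Delta u + K)\bigr)$ which is cubic in third spatial derivatives. This bad term would be eliminated by using the once-differentiated equation to substitute for $\nabla_\eta u_{tt}$ and completing the square against the good term, in the same spirit as the Cauchy--Schwarz manipulation at the end of Lemma \ref{dereqlem25}. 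The resulting lower-order errors are controlled by the already established bounds on $|u|$, $|\nabla u|$, $|\Delta u|$, $|u_{tt}|$, $|\nabla u_t|$, while the positive contribution $2B(-\Delta u + K)$ from the barrier together with the gradient term generated by $A|\nabla u|^2$ should dominate, yielding $\mathcal{L}_u H(x_0,t_0) > 0$ whenever $\lambda_{\max}(x_0,t_0)$ exceeds a threshold depending only on the initial data. The main obstacle is the delicate third-order bookkeeping, combined with the fact that $-\Delta u + K$ may not be uniformly bounded below in $\epsilon$; if the logarithm fails to provide enough room to absorb the bad coefficient, I would fall back on $\lambda_{\max}^p$ with $p$ large or include a weight $e^{Cu}$, in the style of Pogorelov's classical Hessian estimate.
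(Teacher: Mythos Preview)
Your approach is close in spirit to the paper's---a maximum principle on an eigenvalue of $\nabla^2 u$, corrected by $|\nabla u|^2$ and a barrier in $t$---but there is a genuine sign obstruction. You work with $\lambda_{\max}$ and seek a contradiction at an interior \emph{maximum} of $W$, whereas the paper works with $\lambda_{\min}$ at an interior \emph{minimum} of $\Phi=\lambda_{\min}-|\nabla u|^2+2t(1-t)$. These two routes are \emph{not} symmetric for this equation.

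When you differentiate $(\star_\epsilon)$ twice in the direction $\eta$ and solve for the leading part of $\mathcal{L}_u\phi$ (with $\phi=\eta^i\eta^j\nabla_i\nabla_j u$), the term $-2|\omega|^2$, where $\omega_k=\eta^i\nabla_i\nabla_k u_t$, always appears with a minus sign: it comes from moving $+2\,\eta^i\eta^j\nabla_i\nabla_k u_t\,\nabla_j\nabla_k u_t$ (the second spatial derivative of $|\nabla u_t|^2$) to the other side. The cross term $4v\,\omega(\nabla u_t)$, with $v=\eta^i\nabla_i u_{tt}/u_{tt}$, is then absorbed by completing the square,
\[
-2|\omega|^2+4v\,\omega(\nabla u_t)=-2\,|\omega-v\nabla u_t|^2+2v^2|\nabla u_t|^2\le 2v^2|\nabla u_t|^2,
\]
and the equation itself collapses $2v^2|\nabla u_t|^2+2v^2u_{tt}(-\Delta u+K)$ to $-2v^2 f$, yielding the \emph{upper} bound $\mathcal{L}_u\phi\le Cf+(-u_{tt})\bigl(C+C|\nabla^2u|\bigr)$. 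This is exactly what the paper needs (show $\mathcal{L}_u\Phi<0$ at an interior minimum). For your argument, however, you need a \emph{lower} bound on $\mathcal{L}_u\phi$, and there is none: $-2|\omega-v\nabla u_t|^2$ is an uncontrolled third-order quantity with no floor. So your identification of $-2|\nabla\nabla_\eta u_t|^2$ as ``beneficial'' is backwards in your own sign convention; in a maximum-of-$\lambda_{\max}$ argument it is the fatal term, not the helpful one.

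Neither the logarithm (which only adds further negative contributions $-\phi_t^2(-\Delta u+K)/\phi^2$, etc.) nor $A|\nabla u|^2$ (whose good term $2A(-u_{tt})|\nabla^2 u|^2$ is a \emph{spatial} Hessian square, not a mixed $\nabla\nabla u_t$ square) can absorb $-2|\omega|^2$. The fix is to switch to $\lambda_{\min}$, drop the logarithm, and \emph{subtract} $|\nabla u|^2$; then the paper closes by combining $-(-u_{tt})|\nabla^2u|^2$ with the barrier contribution $-4(-\Delta u+K)$ via $(-u_{tt})\ge f/(-\Delta u+K)$ and the arithmetic--geometric mean, giving $\mathcal{L}_u\Phi\le -4\sqrt{f}\,|\nabla^2u|+Cf$, which is negative for $|\nabla^2u|$ large since $\sqrt{f}\gg f$ when $f=\epsilon f_0$.
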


\begin{proof}
By Proposition \ref{intestprop3} the trace of $\N^2 u$ is controlled, and therefore it suffices to control either the smallest or largest eignevalue; we will
estimate the smallest.

Since $|\N^2 u|$ is controlled on the boundary $M \times \{ 0,1\}$, we only need an interior estimate.  To this end, let
\begin{align} \label{lmin}
\lambda_{min}(x,t) = \min_{g_x(X,X) = 1} \N^2 u(x,t)(X,X)
\end{align}
denote the smallest eigenvalue of $\N^2 u$ at $(x,t)$.  Define
\begin{align} \label{Hdef2}
H = H(x,t) = \lambda_{min}(x,t) -  |\N u(x,t)|^2  + 2 t (1-t),
\end{align}
where $A, B > 0$ will be specified later, but will only depend on the boundary data $u_0, u_1$.  If $H$ attains its minimum on the boundary $M \times \{ 0,1\}$,
then we are done.  Therefore, assume $H$ attains its minimum at a point $(x_0,t_0)$, with $0 < t_0 < 1$.  Let $\eta \in T_{x_0}M$ be a unit eigenvector corresponding to the smallest
eigenvalue $\lambda_{min}(x_0,t_0)$.  As in the proof of Proposition \ref{C2mixed}, we extend $\eta$ locally by parallel transport along radial geodesics originating at $x_0$, hence $\eta$ satisfies (\ref{pareta}).  By introducing a cutoff, we can assume that $\eta$ is globally defined (in space),
with $|\eta| \leq 1$ on $M$ and $|\eta| = 1$ near $x_0$.

Now let
\begin{align} \label{Phidef}
\Phi = \Phi(x,t) = \eta^i \eta^j \N_i \N_j u -  |\N u|^2  + 2  t(1-t).
\end{align}
Then at each point $(x,t)$ near $(x_0,t_0)$,
\begin{align*}
\Phi(x,t) &= \eta^i \eta^j \N_i \N_j u -   |\N u|^2 + 2 t(1-t) \\
&\geq \lambda_{min}(x,t)|\eta|^2 -  |\N u|^2  + 2 t (1-t).
\end{align*}
Since $|\eta|^2 = 1$ in a neighborhood of $x_0$, it follows that for $x$ near $x_0$,
\begin{align*}
\Phi(x,t) \geq H(x,t),
\end{align*}
with equality when $(x,t) = (x_0,t_0)$. It follows that $\Phi$ attains a local minimum at $(x_0, t_0)$, hence
\begin{align} \label{MPPhi}
\mathcal{L}_u \Phi (x_0,t_0) \geq 0.
\end{align}

To compute $\mathcal{L}_u \Phi$, let $\phi = \eta^i \eta^j \N_i \N_j u$.  Since $\N \eta(x_0) = 0$, at $x_0$, we have
\begin{align} \label{ptx} \begin{split}
\phi_t &= \eta^i \eta^j \N_i \N_j u_t, \\
\phi_{tt} &= \eta^i \eta^j \N_i \N_j u_{tt}, \\
\N_k \phi_t &= \eta^i \eta^j \N_k \N_i \N_j u_t.
\end{split}
\end{align}
By commuting derivatives and using (\ref{pareta}), we also have (at $x_0$)
\begin{align*}
\Delta \phi &= \eta^i \eta^j \Delta (\N_i \N_j u) + \Delta \eta^i \eta^j \N_i \N_j u + \eta^i \Delta \eta^j \N_i \N_j u \\
&= \eta^i \eta^j \big\{ \N_i \N_j (\Delta u) + \N_i K \N_j u + \N_j K \N_i u - \langle \N K, \N u \rangle g_{ij} + 4 K \N_i \N_j u
- 2 K (\Delta u)g_{ij} \big\}  \\
& \quad + \Delta \eta^i \eta^j \N_i \N_j u + \eta^i \Delta \eta^j \N_i \N_j u.
\end{align*}
Since $|\N u|$ and $|\Delta u|$ are bounded, it follows
\begin{align} \label{DelP1}
\Delta \phi \leq \eta^i \eta^j  \N_i \N_j (\Delta u) + C  + C |\N^2 u|.
\end{align}
Combining (\ref{ptx}) and (\ref{DelP1}), we have
\begin{align} \label{LP1}
\mathcal{L}_u \phi \leq \eta^i \eta^j \Big\{ \N_i \N_j u_{tt} (-\Delta u + K) - u_{tt} \N_i \N_j (\Delta u)  + 2 \N_k \N_i \N_j u_t \N_k u_t \Big\}  + (-u_{tt}) \big[  C + C |\N^2 u| \big].
\end{align}

Differentiating $(\star_{\epsilon})$ once gives
\begin{align} \label{Dstr} \begin{split}
0 &= \N_j \Big\{ u_{tt} (-\Delta u + K) + |\N u_t|^2 + f \Big\} \\
&= \N_j u_{tt} (-\Delta u + K) + u_{tt} ( - \N_j(\Delta u) + \N_j K ) + 2 \N_j \N_k u_t \N_k u_t + \N_j f.
\end{split}
\end{align}
Differentiating again,
\begin{align} \label{DDstr} \begin{split}
0 &= \N_i \N_j \Big\{ u_{tt} (-\Delta u + K) + |\N u_t|^2 + f \Big\} \\
&= \N_i \N_j u_{tt} (-\Delta u + K) + \N_j u_{tt} (-\N_i(\Delta u) + \N_i K ) + \N_j u_{tt} ( - \N_i(\Delta u) + \N_i K ) \\
&\quad + u_{tt} ( -\N_i \N_j (\Delta u) + \N_i \N_j K ) + 2 \N_i \N_k u_t \N_j \N_k u_t  + 2 \N_k u_t \N_i \N_j \N_k u_t + \N_i \N_j f.
\end{split}
\end{align}
By (\ref{Dstr}), the cross terms above can be written
\begin{align} \label{crs} \begin{split}
\N_i u_{tt} & (- \N_j (\Delta u) + \N_j K) + \N_j u_{tt} (- \N_i (\Delta u) + \N_i K)   \\
& = - \frac{\N_i u_{tt} \N_j u_{tt} }{u_{tt}} (-\Delta u + K) - 2 \frac{\N_i u_{tt}}{u_{tt}} \N_j \N_k u_t \N_k u_t - \frac{\N_i u_{tt}}{u_{tt}} \N_j f  \\
&\quad - \frac{\N_j u_{tt} \N_i u_{tt} }{u_{tt}} (-\Delta u + K) - 2 \frac{\N_j u_{tt}}{u_{tt}} \N_i \N_k u_t \N_k u_t - \frac{\N_j u_{tt}}{u_{tt}} \N_i f.
\end{split}
\end{align}
Also, commuting derivatives in the second-to-last term in (\ref{DDstr}) gives
\begin{align} \label{comt}
2 \N_k u_t \N_i \N_j \N_k u_t = 2 \N_k u_t \N_k \N_i \N_j  u_t + 2 K |\N u_t|^2 g_{ij} - 2 K \N_i u_t \N_j u_t.
\end{align}
Substituting (\ref{crs}) and (\ref{comt}) into (\ref{DDstr}), pairing with $\eta^i \eta^j$, and rearranging terms gives
\begin{align} \label{LP2} \begin{split}
0 &= \eta^i \eta^j \Big\{ \N_i \N_j u_{tt} (-\Delta u + K) + u_{tt} ( -\N_i \N_j (\Delta u) + \N_i \N_j K ) +  2 \N_k u_t \N_k \N_i \N_j  u_t \\
&\quad - 2 \frac{\N_i u_{tt} \N_j u_{tt} }{u_{tt}} (-\Delta u + K) - 2 \frac{\N_i u_{tt}}{u_{tt}} \N_j \N_k u_t \N_k u_t - 2 \frac{\N_j u_{tt}}{u_{tt}} \N_i \N_k u_t \N_k u_t  \\
& \quad  + 2 \N_i \N_k u_t \N_j \N_k u_t  - \frac{\N_i u_{tt}}{u_{tt}} \N_j f  - \frac{\N_j u_{tt}}{u_{tt}} \N_i f + 2 K |\N u_t|^2 g_{ij} - 2 K \N_i u_t \N_j u_t   \\
& \quad \quad + \N_i \N_j f \Big\}.
\end{split}
\end{align}
Notice the first three terms in (\ref{LP2}) correspond to the leading terms of (\ref{LP1}).  Therefore, substituting gives
\begin{align} \label{LP3} \begin{split}
\mathcal{L}_u \phi &\leq \eta^i \eta^j \Big\{ 2 \frac{\N_i u_{tt} \N_j u_{tt} }{u_{tt}} (-\Delta u + K) + 2 \frac{\N_i u_{tt}}{u_{tt}} \N_j \N_k u_t \N_k u_t + 2 \frac{\N_j u_{tt}}{u_{tt}} \N_i \N_k u_t \N_k u_t  \\
& \quad  - 2 \N_i \N_k u_t \N_j \N_k u_t  + \frac{\N_i u_{tt}}{u_{tt}} \N_j f  + \frac{\N_j u_{tt}}{u_{tt}} \N_i f - 2 K |\N u_t|^2 g_{ij} + 2 K \N_i u_t \N_j u_t - \N_i \N_j f \Big\} \\
& \quad \quad + (- u_{tt}) \big[ C + C |\N^2 u|  \big].
\end{split}
\end{align}

Let
\begin{align*}
\omega_k &= \eta^i \N_i \N_k u_t, \\
v &= \eta^i \frac{\N_i u_{tt}}{u_{tt}}.
\end{align*}
Then (\ref{LP3}) implies
\begin{align} \label{LP4} \begin{split}
\mathcal{L}_u \phi &= 2 v^2 u_{tt} (-\Delta u + K) + 4 v \omega(\N u_t) - 2 |\omega|^2  + 2 v \langle \eta, \N f \rangle \\
& \quad + \eta^i \eta^j \big\{ - 2 K |\N u_t|^2 g_{ij} + 2 K \N_i u_t \N_j u_t - \N_i \N_j f  \big\} \\
& \quad  + (- u_{tt}) \big[  C + C |\N^2 u|  \big].
\end{split}
\end{align}
Note that using the equation, the first line in (\ref{LP4}) can be written
\begin{align} \label{in1} \begin{split}
2 v^2 u_{tt} (-\Delta u + K) + & 4 v \omega(\N u_t) - 2 |\omega|^2  + 2 v \langle |\eta|, \N f \rangle  \\
&\leq 2 v^2  u_{tt} (-\Delta u + K) + 2 v^2 |\N u_t|^2 + 2 v |\eta| |\N f| \\
&= 2 v^2 \big\{  u_{tt} (-\Delta u + K) + |\N u_t|^2 + f \big\} - 2 f v^2 + 2 v |\eta| |\N f|  \\
&= - 2 f v^2 + 2 v |\eta| |\N f| \\
&\leq \frac{1}{2} \eta^2 \frac{|\N f|^2}{f} \\
&\leq C f.
\end{split}
\end{align}
where in the final inequality we used the fact that $f = \epsilon f_0$ with $f_0 > 0$ depending only on the boundary data for $u$.

The second line in (\ref{LP4}) can be estimated as
\begin{align} \label{in2}
- 2 K |\N u_t|^2 |\eta|^2 + 2 K \langle \N u_t , \eta \rangle^2 + \N^2 f(\eta,\eta) \leq |\eta|^2  |\N^2 f| \leq C f.
\end{align}
where again we used the special form of $f$.
Combining (\ref{LP4}), (\ref{in1}), and (\ref{in2}) we conclude
\begin{align} \label{LP5}
\mathcal{L}_u \phi \leq C f + (-u_{tt})\big[  C + C |\N^2 u| \big].
\end{align}

Next, we recall the identity (\ref{LDu}):
\begin{align} \label{LD2} \begin{split}
\mathcal{L}_u |\N u|^2 =&\ 2 |\N u_{t}|^2  ( -\Delta u + K) + (-u_{tt}) \left(2 |\nabla^2 u|^2 + 2 K|\N u|^2 - 2 \langle \N K, \N u \rangle \right)  \\
&\quad + 4 \N_k \N_i u \N_k u_t \N_i u_t - 2 \langle \N u , \N f \rangle.
\end{split}
\end{align}
Writing the equation as in (\ref{equt}), we can estimate the first term on the second line above by
\begin{align} \label{ct1} \begin{split}
4 \N_k \N_i u \N_k u_t \N_i u_t &\geq - 4 |\N^2 u| |\N u_t|^2 \\
&= - 4 |\N^2 u| \Big\{ (-u_{tt})\big( - \Delta u + K \big) - f \Big\} \\
&\geq - 4 (-u_{tt}) |\N^2 u| ( - \Delta u + K ).
\end{split}
\end{align}
Since $|\Delta u|$ is bounded, it follows that
\begin{align} \label{SL1}
4 \N_k \N_i u \N_k u_t \N_i u_t \geq - C (-u_{tt})|\N^2 u|.
\end{align}
Using the bound for the gradient, the last term in the second line of (\ref{LD2}) can easily be estimated as
\begin{align} \label{SL2}
- 2 \langle \N u , \N f \rangle \geq -  C |\N f| \geq - C f.
\end{align}
Substituting (\ref{SL1}) and (\ref{SL2}) into (\ref{LD2}), we get
\begin{align} \label{LD3}
\mathcal{L}_u |\N u|^2 \geq (-u_{tt}) \left( 2 |\nabla^2 u|^2 - C |\N^2 u| \right) - C f.
\end{align}
Using Lemma \ref{dereqlemzed}, we therefore have
\begin{align} \label{BC1}
\mathcal{L}_u \big( -  |\N u|^2 +  2 t (1-t) \big) &\leq   (-u_{tt}) \left( - 2  |\nabla^2 u|^2 +  C |\N^2 u|  \right)  - 4 (\Delta u + K ) + C  f.
\end{align}
Combining this with (\ref{LP5}) gives
\begin{align} \label{LP6}
\mathcal{L}_u \Psi \leq (-u_{tt}) \big\{ - 2 |\N^2 u|^2 + C  |\N^2 u| + C \big\} - 4 (\Delta u + K ) + C f.
\end{align}
If $|\N^2 u|$ is large enough, we may assume that
\begin{align*}
- 2|\N^2 u|^2 + C |\N^2 u| + C  \leq -  |\N^2 u|^2.
\end{align*}
Therefore,
\begin{align} \label{LP7}
\mathcal{L}_u \Psi \leq  - |\N^2 u|^2 (-u_{tt}) - 4 (\Delta u + K ) + C  |\N^2 u| + C  f.
\end{align}
By $(\star_{\epsilon})$,
\begin{align*}
-u_{tt} = \frac{|\N u_t|^2 + f}{(-\Delta u + K)} \geq \frac{f}{(-\Delta u + K)},
\end{align*}
hence
\begin{align} \label{LP8}
\mathcal{L}_u \Psi \leq - \frac{  f}{(-\Delta u + K)}|\N^2 u|^2  - 4 (\Delta u + K ) + C  f.
\end{align}
Since
\begin{align*}
4 \sqrt{f} |\N^2 u| \leq \frac{  f}{(-\Delta u + K)}|\N^2 u|^2  + 4 (\Delta u + K ),
\end{align*}
it follows that
\begin{align} \label{LP9}
\mathcal{L}_u \Psi \leq - 4 \sqrt{f} |\N^2 u| + C f.
\end{align}
Since $f = \epsilon f_0 \approx \epsilon$, it follows that $\sqrt{f} \geq  c_0 f$ for some $c_0 > 0$.  Therefore,
\begin{align} \label{LP10}
\mathcal{L}_u \Psi \leq ( - 4 c_0  |\N^2 u| + C ) f,
\end{align}
and once $|\N^2 u|$ is large enough, we conclude $\mathcal{L}_u \Psi < 0$.  As a consequence, either $|\N^2 u|$ is bounded at an interior minimum of $\Psi$, or else $\Psi$ attains its minimum when $t=0$ or $t=1$.  In either case we see that $\Psi$ is bounded from below, hence $|\N^2 u|$ is bounded.
\end{proof}

\subsection{Existence}

Using the {\em a priori} estimates, we can use the continuity argument to prove the following existence result:

\begin{prop} \label{posReg}  For each $\epsilon > 0$ sufficiently small, there is a unique $C^{\infty}$-solution $u = u_{\epsilon}(x,t)$ of $(\star_{\epsilon})$.  Furthermore,
there is a constant $C$ depending on $u_0, u_1$ (but independent of $\epsilon$) such that
\begin{align} \label{C11}
\max_{ M \times [0,1]} \big\{ |u| + |\N u| + |u_t| + |u_{tt}| + |\N u_t| + |\N^2 u| \big\} \leq C.
\end{align}
\end{prop}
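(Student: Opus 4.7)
The plan is to execute the continuity argument sketched in steps (1)--(4) prior to Lemma \ref{initLemma}. Fix $\epsilon_0 \in (0,1)$ and set
\[
S = \{\epsilon \in [\epsilon_0, 1] : (\star_\epsilon) \text{ admits an admissible } C^{2,\alpha} \text{ solution satisfying (\ref{bdycon})}\}.
\]
Lemma \ref{initLemma} yields $1 \in S$ via the explicit function $\tilde u$ from (\ref{tu}). I will verify that $S$ is both open and closed in $[\epsilon_0, 1]$, so that $S = [\epsilon_0, 1]$; since $\epsilon_0 > 0$ is arbitrary this produces a solution for every $\epsilon \in (0, 1]$. The uniform bound (\ref{C11}) is then the consolidation of Propositions \ref{intestprop1}, \ref{boundaryestprop1}, \ref{intestDu}, \ref{intestprop3}, and \ref{C2Hess} together with Corollaries \ref{uttMbound} and \ref{DutMbound}, each of which was already established to be uniform in $\epsilon \in (0,1]$.

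For openness, at an admissible solution $u_{\epsilon_1}$ Lemma \ref{ellipticLemma} shows that $\mathcal{L}_{u_{\epsilon_1}}$ is strictly elliptic; from $(\star_{\epsilon_1})$ one has $-u_{tt} > 0$ and $-\Delta u + K > 0$, and the operator contains no zeroth-order term. Standard linear theory then makes the Dirichlet problem for $\mathcal{L}_{u_{\epsilon_1}}$ with zero boundary data an isomorphism between the corresponding $C^{2,\alpha}$ and $C^{\alpha}$ spaces. Applying the implicit function theorem to the map $(\epsilon, u) \mapsto \mathcal{G}_{\epsilon f_0}(u)$ produces admissible $C^{2,\alpha}$ solutions for all $\epsilon$ in a neighbourhood of $\epsilon_1$; admissibility is preserved under small $C^{2,\alpha}$ perturbations.

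For closedness, take $\epsilon_k \in S$ with $\epsilon_k \to \epsilon_* \geq \epsilon_0$. The uniform $C^{1,1}$ bound gives precompactness of $\{u_{\epsilon_k}\}$ in $C^{1,\beta}$ for every $\beta < 1$, and the identity $(-\Delta u + K)(-u_{tt}) - |\nabla u_t|^2 = f \geq \epsilon_0 \min f_0 > 0$ keeps the linearization uniformly elliptic along the sequence. Standard regularity theory upgrades the convergence to $C^{2,\alpha}$ and then iteratively to $C^\infty$; admissibility passes to the limit because $-\Delta u_{\epsilon_*} + K \geq \epsilon_0 \min f_0 / \sup|u_{tt}| > 0$. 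Uniqueness follows from a maximum principle: if $u, v$ are both admissible solutions, the convex interpolant $w_s = (1-s)u + sv$ is admissible for every $s$, and $u - v$ satisfies the linear elliptic equation $\int_0^1 \mathcal{L}_{w_s}(u-v)\,ds = 0$ with zero Dirichlet data, forcing $u \equiv v$.

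The principal technical obstacle is the upgrade from $C^{1,1}$ to $C^{2,\alpha}$ inside the closedness step. The equation is genuinely fully nonlinear and the Hessian coefficient $F(D^2 u) = u_{tt}(K - \Delta u) + |\nabla u_t|^2$ is quadratic but neither convex nor concave in $D^2 u$, so Evans--Krylov is not directly available. Instead one differentiates $(\star_\epsilon)$ once in an arbitrary direction $e$ to obtain a linear elliptic equation for $u_e$ whose coefficients are in $L^\infty$ (controlled by the $C^{1,1}$ bound) and whose inhomogeneity is smooth; Krylov--Safonov estimates yield $C^{1,\alpha}$ regularity for $u_e$, after which a Schauder bootstrap in the uniformly elliptic regime $f \geq \epsilon_0 \min f_0$ provides the desired $C^\infty$ bounds.
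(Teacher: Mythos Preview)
Your continuity setup, openness argument, and uniqueness argument all match the paper's approach and are fine. The problem is in the closedness step, specifically the passage from the uniform $C^{1,1}$ bound to $C^{2,\alpha}$.

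Your assertion that the operator $\mathcal{G}_f(u)=u_{tt}(K-\Delta u)+|\nabla u_t|^2+f$ is ``neither convex nor concave in $D^2u$, so Evans--Krylov is not directly available'' is true as stated but misleading, and the workaround you propose does not succeed. Differentiating $(\star_\epsilon)$ in a direction $e$ yields a linear non-divergence equation $\mathcal{L}_u(u_e)=g$ with $L^\infty$ coefficients and $g\in L^\infty$. Krylov--Safonov applied to this equation gives only $u_e\in C^\alpha$, i.e.\ $u\in C^{1,\alpha}$, which is \emph{weaker} than the $C^{1,1}$ bound you already possess. It does not give $u_e\in C^{1,\alpha}$ as you claim. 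Moreover there is no bootstrap available: the second-order coefficients of $\mathcal{L}_u$ are $(-\Delta u+K)$, $-u_{tt}$, and $\nabla u_t$, all of which remain merely $L^\infty$ after this step, so Schauder theory cannot be invoked.

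The paper closes this gap by rewriting the equation in the equivalent form
\[
\mathcal{M}[u]=u_{tt}+\frac{|\nabla u_t|^2+f}{-\Delta u+K}=0,
\]
obtained by dividing through by the positive quantity $-\Delta u+K$. A direct computation of $\mathcal{M}^{ab,cd}$ shows that $\mathcal{M}$ is \emph{convex} as a function of the space-time Hessian, so Evans--Krylov applies after all and yields the $C^{2,\alpha}$ estimate (with constant depending on $\epsilon_0$, which is harmless for closedness). This is the missing idea in your argument.
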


\begin{proof}  Fix $0 < \epsilon_0 < 1$, and let
\begin{align*}
\mathcal{I} = \big\{ \epsilon \in [\epsilon_0, 1] \ :\ \exists u \in C^{4,\alpha} \cap \Gpos,\ u \mbox{ solves $(\star_{\epsilon})$} \big\}.
\end{align*}
By Lemma \ref{initLemma}, there is a (smooth) admissible solution $u$ of $(\star_{\epsilon})$ with $\epsilon =1$.  Therefore, $\mathcal{I}$ is non-empty.

We claim that $\mathcal{I}$ is closed: let $\{ u_i = u_{\epsilon_i} \}$ be a sequence of admissible solutions with $\epsilon_i \geq \epsilon_0 > 0$.  The preceding {\em a priori }
estimates imply there is a constant $C$ (independent of $\epsilon$) such that
\begin{align*}
\max \big\{ |u_i| + |\N u_i| + |(u_i)_t| + |(u_i)_{tt}| + |(\N u_i)_t| + |\N^2 u_i| \big\} \leq C.
\end{align*}
To apply Evans/Krylov and obtain H\"older estimates for the second derivatives, we need to verify the concavity of the operator.  To do so we will rewrite the operator as
\begin{align} \label{Opdef}
\mathcal{M}[u] = u_{tt} + \frac{ |\N u_t|^2 + f}{-\Delta u + K}.
\end{align}
We view $\mathcal{M}$ as a function of the space-time Hessian of $u$, and to simplify the calculations let us fix a point $x_0 \in M$ and introduce normal coordinates $\{ x^1, x^2 \}$ based at $x_0$.  We also denote derivatives with respect to $t$ with the subscript $0$; then at $x_0$
\begin{align} \label{Mr}
\mathcal{M}[u] = - u_{00} - \frac{u_{01}^2 + u_{02}^2 + f(x_0)}{-u_{11} - u_{22} + K(x_0)}.
\end{align}
To simplify just write $f(x_0) = f$ and $K(x_0) = K$.  Expressed in this way it is easy to see that the equation remains elliptic: differentiating with respect to the entries of the space-time Hessian $\{ r_{ab} \}$, $0 \leq a,b \leq 2$, we have
\begin{align*}
\mathcal{M}^{00} &= 1 \\
\mathcal{M}^{0i} &= \frac{2 r_{0i}}{-r_{11} - r_{22} + K}, \quad 1 \leq i \leq 2 \\
\mathcal{M}^{ij} &= \frac{ r_{01}^2 + r_{02}^2 + f}{-r_{11} - r_{22} + K} \delta_{ij}, \quad 1 \leq i,j \leq 2.
\end{align*}
Differentiating again,
\begin{align*}
\mathcal{M}^{00,00} &= 0 \\
\mathcal{M}^{00,0i} &= 0, \quad 1 \leq i \leq 2 \\
\mathcal{M}^{00,ij} &= 0, \quad 1 \leq i,j \leq 2  \\
\mathcal{M}^{0i,k\ell} &= \frac{2 r_{0i}}{(-r_{11} - r_{22} + K)^2}\delta_{k \ell}, \quad 1 \leq i,k,\ell \leq 2  \\
\mathcal{M}^{0i,0j} &= \frac{2 \delta_{ij} }{-r_{11} - r_{22} + K}, \quad 1 \leq i,j \leq 2  \\
\mathcal{M}^{ij, k\ell} &= \frac{2(r_{01}^2 + r_{02}^2 + f)}{(- r_{11} - r_{22} + K)^3}\delta_{ij} \delta_{k\ell}, \quad 1 \leq i,j,k,\ell \leq 2.
\end{align*}
Then
\begin{align} \label{Mee1} \begin{split}
\mathcal{M}^{ab, cd} \eta^{ab} \eta^{cd} =&\ \frac{4}{(-r_{11} - r_{22} + K)^2} \big[ r_{01} \eta^{01} + r_{02} \eta^{02} \big] \big[ \eta^{11} + \eta^{22} \big] \\
 &\  + \frac{2}{-r_{11} - r_{22} + K}\big[ (\eta^{01})^2 + (\eta^{02})^2 \big] + \frac{ 2( r_{01}^2 + r_{02}^2 + f)}{(-r_{11} - r_{22} + K)^3} \big(\eta^{11} + \eta^{22} \big)^2.
 \end{split}
\end{align}
The first term above can be estimated by
\begin{align*}
\frac{4}{(-r_{11} - r_{22} + K)^2}  &\big[ r_{01} \eta^{01}  + r_{02} \eta^{02} \big] \big[ \eta^{11} + \eta^{22} \big] \geq   \\
& - \frac{ 2( r_{01}^2 + r_{02}^2 + f)}{(-r_{11} - r_{22} + K)^3} \big(\eta^{11} + \eta^{22} \big)^2 - \frac{2}{(-r_{11} - r_{22} + K)} \frac{\big[ r_{01} \eta^{01} + r_{02} \eta^{02} \big]^2}{( r_{01}^2 + r_{02}^2 + f)}.
\end{align*}
Substituting into (\ref{Mee1}) gives
\begin{align} \label{Mee2}  \begin{split}
\mathcal{M}^{ab, cd} \eta^{ab} \eta^{cd} &\geq \frac{2}{(-r_{11} - r_{22} + K)} \Bigg\{ \big[ (\eta^{01})^2 + (\eta^{02})^2 \big] -  \frac{\big[ r_{01} \eta^{01} + r_{02} \eta^{02} \big]^2}{( r_{01}^2 + r_{02}^2 + f)} \Bigg\} \\
&= 2 \frac{ \big[ (\eta^{01})^2 + (\eta^{02})^2 \big]( r_{01}^2 + r_{02}^2 + f) -  \big[ r_{01} \eta^{01} + r_{02} \eta^{02} \big]^2 }{(-r_{11} - r_{22} + K)}   \\
&\geq 2 \frac{ \Big[ r_{01}^2 (\eta^{02})^2 + r_{02}^2 (\eta^{01})^2  - 2 r_{01} r_{02} \eta^{01} \eta^{02} \Big]  }{(-r_{11} - r_{22} + K)} \\
&\geq 0.
\end{split}
\end{align}
It follows that $\mathcal{M}$ is a convex function of the space-time Hessian, and applying Evans-Krylov \cite{Evans} \cite{Krylov} we conclude there is a constant $C = C(\epsilon)$ such that
\begin{align*}
\| u_i \|_{C^{2,\alpha}} \leq C.
\end{align*}
Applying the Schauder estimates we obtain bounds on derivatives of all orders, and it follows that the set $\mathcal{I}$ is closed.

To verify that $\mathcal{I}$ is open, it suffices to study the linearized equation; i.e., given $\psi \in C^{\infty}(M \times [0,1])$, we need to solve
\begin{align*}
\mathcal{L}_{u_i} \varphi = \psi
\end{align*}
with $\varphi$ satisfying Dirichlet boundary conditions.  The solvability of this linear problem follows from \cite{GT}, Theorem 6.13. 

Since $\mathcal{I}$ is open, closed, and non-empty, it follows that $\mathcal{I} = [\epsilon_0,1]$.  This proves the existence of solutions.  Uniqueness will follow from the following comparison lemma: 

\begin{lemma} \label{compare} Suppose $u, \tilde{u} \in C^{\infty}$ are admissible and satisfy
\begin{align} \label{Guv} \begin{split}
\mathcal{G}_{f_1}(u) &= 0, \\
\mathcal{G}_{f_2}(\tilde{u}) &= 0, 
\end{split}
\end{align}
where $f_1 \leq f_2$.  Assume further that on the boundary,  
\begin{align} \label{uvbdy} \begin{split}
u(x,0) &= \tilde{u}(x,0), \\
u(x,1) &= \tilde{u}(x,1). 
\end{split}
\end{align}
Then on $M \times [0,1]$, 
\begin{align} \label{ulv}
u(x,t) \geq \tilde{u}(x,t).
\end{align}
\end{lemma}

\begin{proof} 
For $0 \leq \theta \leq 1$,
define
\begin{align*}
w_{\theta} = (1 - \theta) u + \theta \tilde{u}.
\end{align*}
Then $w_0 = u$ and $w_1 = \tilde{u}$.  By (\ref{Guv}), 
\begin{align*}
\mathcal{G}_0(w_1) - \mathcal{G}_0(w_0) = \mathcal{G}_0(\tilde{u}) - \mathcal{G}_0(u) = f_2 - f_1 \geq 0.
\end{align*}
Therefore, 
\begin{align*}
0 \leq \mathcal{G}_0[\tilde{u}] - \mathcal{G}_0[u] &= \int_0^1 \frac{d}{d\theta} \mathcal{G}_0[w_{\theta}]\ d\theta \\
&= \int_0^1 \mathcal{L}_{w_{\theta}}( \tilde{u} - u ) d\theta \\
&= \int_0^1 \Big\{ (1-\theta) \mathcal{L}_u (\tilde{u}-u) + \theta \mathcal{L}_{\tilde{u}} (\tilde{u}-u) \Big\} d\theta \\
&= \frac{1}{2} \big\{ \mathcal{L}_u + \mathcal{L}_{\tilde{u}} \big\}(\tilde{u}-u).
\end{align*}
Since $\tilde{u} - u = 0$ on the boundary, by the maximum principle we conclude that $\tilde{u} - u \leq 0$ on $M \times [0,1]$.  \end{proof}

If $u, \tilde{u}$ are solutions of $(\star_{\epsilon})$ with the same boundary conditions, then we may apply the preceding comparison lemma with $f_1 = f_2 = \epsilon f_0$ and 
conclude that $u = v$.  This completes the proof of Proposition \ref{posReg}. 
\end{proof}

Since we have existence and uniqueness of classical solutions for the regularized problem, we make the following definition:


\begin{defn} \label{RG} We say that a $C^{1,1}$ solution $u(x,t)$ to (\ref{Gf}) with $f \equiv 0$ is
\emph{regularizable} if there exists $f_0 \in C^{\infty}(M \times [0,1])$ with $f_0 > 0$ and a smooth
function $u(x,t,\ge) : M \times [0,1] \times
[0,\ge_0) \to \mathbb R$ with the following properties:  \\

\noindent $(i)$ For each $\ge \in [0,\ge_0)$ $u_{\epsilon} = u(\cdot,\cdot,\epsilon)$ satisfies
\begin{align*}
u_{\epsilon}(x,0) = u(x,0), \qquad u_{\epsilon}(x,1) = u(x,1), \qquad \mathcal{G}_{\ge f_0}(u_{\epsilon}) = 0.
\end{align*}

\noindent $(ii)$ There is a constant $C_2 > 0$, independent of $\epsilon$, such that
\begin{align*}
|u_{\epsilon}| + |\N u_{\epsilon}| + |(u_{\epsilon})_t| + |\N^2 u_{\epsilon}| + |\N (u_{\epsilon})_t| + |(u_{\epsilon})_{tt}| \leq C_2.
\end{align*}

\noindent $(iii)$  For each $0 < \alpha < 1$, $u_{\epsilon} \rightarrow u$ in $C^{1,\alpha}$.
\end{defn}

Given $u_0,u_1 \in \gG_1^+$, by Proposition \ref{posReg} we know there is a regularizable geodesic $u \in C^{1,1}(M \times [0,1])$ connecting $u_0$ and $u_1$.  We claim that $u$ is unique.  To
see this, suppose $u, \tilde{u}$ are regularizable geodesics with the same boundary conditions.  By definition, there are regularizations $u(x,t,\ge)$,
$\tilde{u}(x,t,\delta)$, satisfying 
\begin{align*}
\mathcal{G}_{\epsilon f_0}(u_{\epsilon}) &= 0, \\
\mathcal{G}_{\delta \tilde{f}_0}(\tilde{u}_{\delta}) &= 0, 
\end{align*}
where $f_0 , \tilde{f}_0 > 0$.  For fixed $\delta > 0$ and all $\epsilon > 0$ sufficiently small, $\epsilon f_0 < \delta \tilde{f}_0$.  By Lemma \ref{compare}, it follows that
\begin{align*}
u(x,t,\ge) \geq \tilde{u}(x,t,\delta) 
\end{align*}
for fixed $\delta > 0$ and all $\epsilon > 0$ small.  Letting $\epsilon \rightarrow 0$, we get
\begin{align*}
u(x,t) \geq \tilde{u}(x,t,\delta).
\end{align*}
Letting $\delta \rightarrow 0$ gives 
\begin{align*}
u(x,t) \geq \tilde{u}(x,t).
\end{align*}
Reversing the roles of $\delta$ and $\epsilon$, we conclude that $u = \tilde{u}$.  Summarizing:

\begin{prop} \label{posExist} Given $u_0, u_1 \in \Gpos$, there is a unique regularizable $C^{1,1}$-geodesic with
\begin{align} \label{rgbdy} \begin{split}
u(x,0) &= u_0(x), \\
u(x,1) &= u_1(x).
\end{split}
\end{align}
\end{prop}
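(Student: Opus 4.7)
The plan is to combine Proposition \ref{posReg} with a compactness argument for existence, and to invoke the comparison Lemma \ref{compare} twice for uniqueness. Fix admissible boundary data $u_0, u_1 \in \Gpos$, and let $f_0 > 0$ be the function constructed in Lemma \ref{initLemma}. For each sufficiently small $\epsilon > 0$, Proposition \ref{posReg} produces a smooth admissible solution $u_{\epsilon}$ of $(\star_{\epsilon})$ attaining the prescribed boundary values, together with the uniform bound
\[
|u_{\epsilon}| + |\N u_{\epsilon}| + |(u_{\epsilon})_t| + |\N^2 u_{\epsilon}| + |\N (u_{\epsilon})_t| + |(u_{\epsilon})_{tt}| \leq C
\]
with $C$ independent of $\epsilon$. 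By Arzelà--Ascoli, there is a sequence $\epsilon_j \to 0$ along which $u_{\epsilon_j} \to u$ in $C^{1,\alpha}(M \times [0,1])$ for every $\alpha \in (0,1)$, with $u \in C^{1,1}$. The uniform second-derivative bound additionally gives weak-$*$ convergence in $W^{2,\infty}$, so passing to the limit in the identity $\mathcal{G}_{\epsilon_j f_0}(u_{\epsilon_j}) = 0$ yields $u_{tt}(-\Delta u + K) + |\N u_t|^2 = 0$ almost everywhere. This is the geodesic equation $\mathcal{G}_0(u) = 0$, and by construction the family $\{u_{\epsilon_j}\}$ together with $f_0$ verifies the three conditions of Definition \ref{RG}, so $u$ is a regularizable $C^{1,1}$ geodesic.

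For uniqueness, suppose $u$ and $\tilde u$ are both regularizable geodesics sharing the boundary data $u_0, u_1$, with regularizations $u(\cdot,\cdot,\epsilon)$ and $\tilde u(\cdot,\cdot,\delta)$ coming from smooth positive functions $f_0$ and $\tilde f_0$. For any fixed $\delta > 0$ and all sufficiently small $\epsilon > 0$, pointwise on $M \times [0,1]$ one has $\epsilon f_0 \leq \delta \tilde f_0$. Since $u_{\epsilon}, \tilde u_{\delta}$ are admissible solutions of $\mathcal{G}_{\epsilon f_0} = 0$ and $\mathcal{G}_{\delta \tilde f_0} = 0$ with identical Dirichlet data, Lemma \ref{compare} gives $u(x,t,\epsilon) \geq \tilde u(x,t,\delta)$. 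Letting $\epsilon \to 0$ and using property (iii) of Definition \ref{RG} yields $u(x,t) \geq \tilde u(x,t,\delta)$; a further limit $\delta \to 0$ gives $u \geq \tilde u$. Swapping the roles of $u$ and $\tilde u$ in the same argument produces the reverse inequality, so $u = \tilde u$.

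The main obstacle is the limit step in the existence part: the equation is degenerate elliptic at $\epsilon = 0$, so one cannot expect $C^{2,\alpha}$ convergence or preservation of strict admissibility in the limit, and the Evans--Krylov estimates of Proposition \ref{posReg} degenerate as $\epsilon \to 0$. The correct output is precisely a $C^{1,1}$ solution satisfying the equation almost everywhere, and the uniform $\epsilon$-independent $C^{1,1}$ estimate (which is the real content of the preceding subsections) is exactly what makes this passage to the limit work; the remaining steps are essentially bookkeeping to match the axioms of Definition \ref{RG} and to iterate the comparison principle.
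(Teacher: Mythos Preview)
Your argument is correct and follows the paper's approach essentially line for line: existence comes from the uniform $C^{1,1}$ estimates of Proposition \ref{posReg} together with a compactness/limit argument (which the paper leaves implicit in its outline step (4)), and uniqueness is obtained by applying Lemma \ref{compare} to two regularizations with $\epsilon f_0 \leq \delta \tilde f_0$, then passing $\epsilon \to 0$ and $\delta \to 0$ in turn. The only cosmetic point is that Definition \ref{RG} asks for a one-parameter family in $\epsilon$ rather than a subsequence, but this is immediate from the uniqueness of $u_\epsilon$ in Proposition \ref{posReg} (and monotonicity in $\epsilon$ via Lemma \ref{compare} shows the full family, not just a subsequence, converges to $u$).
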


\subsection{Existence of geodesics in the negative cone}

The regularity estimates for the geodesic equation in the negative cone follow almost immediately from the estimates for the positive cone equation.  Recall from (\ref{geodesicNegpde}) that the geodesic equation is
\begin{align} \label{NCG1}
w_{tt} ( - \Delta w + K ) + |\N w_t|^2 = 0.
\end{align}
Let $u = -w$ and $\tilde{K} = - K > 0$; then we can rewrite this as
\begin{align} \label{NCG2}
u_{tt} ( - \Delta u + \tilde{K} ) + |\N u_t|^2 = 0,
\end{align}
which is precisely the geodesic equation for the positive cone.  Therefore,

\begin{prop}  \label{negExist}  Given $w_0, w_1 \in \Gneg$, there is a unique regularizable $C^{1,1}$-geodesic $w = w(x,t)$ with
\begin{align*}
w(x,0) &= w_0(x), \\
w(x,1) &= w_1(x).
\end{align*}
\end{prop}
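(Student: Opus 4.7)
The plan is to exploit the substitution sketched in the paragraph preceding the statement, reducing the negative cone problem to a problem of exactly the positive cone type. Given $w_0, w_1 \in \Gneg$, set $u_i = -w_i$ and introduce the smooth positive function $\tilde{K} = -K_0$ on $M$. The negative cone admissibility condition $K_0 - \Delta_0 w_i < 0$ rewrites as $\tilde{K} - \Delta_0 u_i > 0$, which is precisely the admissibility condition used throughout Section \ref{geoexist}.

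Next I would verify that the entire machinery of Section \ref{geoexist}, namely the regularization $\mathcal{G}_f(u) = u_{tt}(-\Delta u + \tilde{K}) + |\nabla u_t|^2 + f$, the full chain of \emph{a priori} estimates in Propositions \ref{intestprop1} through \ref{C2Hess}, and the continuity plus comparison argument of Proposition \ref{posReg}, carries over verbatim with $K$ replaced by $\tilde{K}$ at every step. This is essentially a bookkeeping check: nowhere in those proofs is $K$ actually used as the Gauss curvature of a specific background metric; only its smoothness and pointwise positivity, together with the admissibility of the boundary data, enter the arguments. Granting this, the positive cone existence theorem produces a unique regularizable $C^{1,1}$-solution $u(x,t)$ of
\begin{align*}
u_{tt}(-\Delta u + \tilde{K}) + |\nabla u_t|^2 = 0
\end{align*}
with $u(\cdot,0) = u_0$ and $u(\cdot,1) = u_1$, together with an accompanying regularization family $u_{\epsilon}$ satisfying the conditions of Definition \ref{RG}.

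Finally, setting $w = -u$ and $w_{\epsilon} = -u_{\epsilon}$ produces a regularizable $C^{1,1}$-geodesic in $\Gneg$ with the prescribed endpoints, since all items in Definition \ref{RG} are manifestly invariant under negation: the uniform bounds in part $(ii)$ depend only on absolute values of $u_{\epsilon}$ and its derivatives, and the $C^{1,\alpha}$ convergence in part $(iii)$ is unaffected by a sign change. Uniqueness transfers in the same fashion: two regularizable geodesics in $\Gneg$ sharing endpoints correspond under $w \mapsto -u$ to two regularizable geodesics for the transformed positive-type problem sharing endpoints, which must agree by the uniqueness assertion of Proposition \ref{posExist}. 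I do not expect any substantive obstacle beyond ensuring that no hidden sign convention in the estimates of Section \ref{geoexist} silently breaks the substitution; a careful read-through of each lemma in that section, treating $K$ purely as a positive smooth function, should suffice.
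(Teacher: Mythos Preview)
Your proposal is correct and follows essentially the same approach as the paper: the paper's proof consists precisely of the substitution $u = -w$, $\tilde{K} = -K > 0$ in the paragraph preceding the proposition, reducing the negative cone equation to the positive cone equation and then invoking the Section~\ref{geoexist} machinery. Your write-up is in fact more detailed than the paper's, which simply states the substitution and declares ``Therefore,'' before the proposition.
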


\section{Metric space structure} \label{metricspace}

In this section we use the existence of regularizable geodesics to define a metric space structure on $\gG_1^+$.  First we show that our definition does indeed define a metric space, and then we establish nonpositivity of curvature in the sense of Alexandrov.  For simplicity we focus entirely on the positive cone setting, the proofs for the negative cone being directly analogous.

\subsection{Metric space structure}

\begin{defn} Given $u_0,u_1 \in \gG_1^+$, let $d(u_0,u_1)$ denote the distance of
the unique regularizable geodesic connecting $u_0$ to $u_1$. 
\end{defn}

We will establish that $d$ does indeed define a metric space structure in what follows.  First we establish nondegeneracy of the distance, for which we require a preliminary lemma.  Then we establish the triangle inequality, finishing the proof.

\begin{lemma} \label{constantenergy} Let $u$ be a regularizable geodesic.  Then
$u$ has constant energy density.
\begin{proof} Fixing a regularization we directly compute using the uniform $C^{1,1}$ bounds to yield
\begin{align*}
\brs{ \frac{d}{dt} \brs{\brs{ \frac{\del u}{\del t}}}^2} =&\ 2 \brs{ \int_M u_t \left(u_{tt}K_u + \brs{\N u_t}^2
\right) dV_u}\\
=&\ 2 \brs{ \int_M u_t \ge f_0 dV_u}\\
\leq&\ 2 \ge C.
\end{align*}
Taking $\ge$ to zero and using the convergence properties finishes the lemma.
\end{proof}
\end{lemma}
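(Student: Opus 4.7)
The plan is to reduce the statement to the already-established conservation law from Lemma \ref{metricsplit} by passing through the regularizing family. Since a regularizable geodesic $u$ is only $C^{1,1}$, we cannot differentiate $E_u(t) = \|u_t\|_u^2$ directly. Instead, fix a regularization $u_\epsilon(x,t)$ with $\mathcal{G}_{\epsilon f_0}(u_\epsilon) = 0$ and $u_\epsilon \to u$ in $C^{1,\alpha}$, and work with $E_{u_\epsilon}(t) = \int_M (u_\epsilon)_t^2 K_{u_\epsilon} dA_{u_\epsilon}$, which is a smooth function of $t$.

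The key computation is to differentiate $E_{u_\epsilon}(t)$. Using the variational formulas $\partial_t K_u = -\Delta_0 u_t - 2K_u u_t$ and $\partial_t dA_u = 2 u_t\, dA_u$ from (\ref{dots}), or more cleanly rewriting $K_u dA_u = (K_0 - \Delta_0 u) dA_0$ so that the volume form is fixed, one obtains after an integration by parts
\begin{align*}
\frac{d}{dt} E_{u_\epsilon}(t) = 2\int_M (u_\epsilon)_t \Bigl[ (u_\epsilon)_{tt}\bigl(K_0 - \Delta_0 u_\epsilon\bigr) + |\nabla_0 (u_\epsilon)_t|^2 \Bigr]\, dA_0.
\end{align*}
By the regularized equation $(\star_\epsilon)$, the bracket equals $-\epsilon f_0$, so
\begin{align*}
\frac{d}{dt} E_{u_\epsilon}(t) = -2\epsilon \int_M (u_\epsilon)_t f_0\, dA_0.
\end{align*}
The uniform $C^{1,1}$ bound from Proposition \ref{posReg} (and the boundary-data-dependent bound on $f_0$) give $|(u_\epsilon)_t f_0| \leq C$ independent of $\epsilon$, hence
\begin{align*}
\Bigl| \frac{d}{dt} E_{u_\epsilon}(t) \Bigr| \leq C\epsilon.
\end{align*}

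Integrating between any two times $t_1, t_2 \in [0,1]$ yields $|E_{u_\epsilon}(t_2) - E_{u_\epsilon}(t_1)| \leq C\epsilon$. The $C^{1,\alpha}$ convergence $u_\epsilon \to u$ ensures $(u_\epsilon)_t \to u_t$ and $\Delta_0 u_\epsilon \to \Delta_0 u$ strongly enough (e.g.\ in $C^0$ and $L^2$ respectively, using uniform $C^{1,1}$ bounds and interpolation) that $E_{u_\epsilon}(t) \to E_u(t)$ for every $t \in [0,1]$. Letting $\epsilon \to 0$ gives $E_u(t_2) = E_u(t_1)$, i.e.\ constant energy density. The main obstacle is just this final limiting step: one must verify that with only $C^{1,\alpha}$ convergence the integrand $(u_\epsilon)_t^2(K_0 - \Delta_0 u_\epsilon)$ converges to $u_t^2(K_0 - \Delta_0 u)$ in $L^1(dA_0)$, which follows from the uniform $C^{1,1}$ control (so $K_0 - \Delta_0 u_\epsilon$ is uniformly bounded and converges weakly, while $(u_\epsilon)_t^2$ converges uniformly) together with dominated convergence.
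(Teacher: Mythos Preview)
Your proof is correct and follows essentially the same approach as the paper: differentiate the energy density along the regularization, use the $\epsilon$-equation to identify the derivative as $O(\epsilon)$, and pass to the limit. Your version is written in terms of the fixed background volume form $dA_0$ rather than $dV_u$, but since $K_u\,dA_u = (K_0 - \Delta_0 u)\,dA_0$ this is the same computation; you are also more explicit than the paper about the limiting step $E_{u_\epsilon}(t) \to E_u(t)$, correctly invoking the uniform $C^{1,1}$ bounds to pair strong convergence of $(u_\epsilon)_t^2$ against weak convergence of $K_0 - \Delta_0 u_\epsilon$.
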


\begin{prop} \label{distancenondegeneracy} Given
$u_0, u_1 \in \gG_1^+$ and $u$ the unique
regularizable geodesic connecting $u_0$ to $u_1$, one has
\begin{align*}
d(u_0,u_1) \geq&\ (2 \pi \chi)^{-\frac{1}{2}} \max \left\{ \int_{u_0 > u_1} (u_0 - u_1)
K_{u_1} dV_{u_1}, \int_{u_1 > u_0} (u_1 - u_0) K_{u_0}
dV_{u_0} \right\}.
\end{align*}
\begin{proof} Observe that the geodesic equation implies $u_{tt} \leq 0$, and
so we obtain the pointwise inequality
\begin{align*}
{u_t}(1) \leq u_1 - u_0 \leq {u_t}(0).
\end{align*}
Thus using H\"older's inequality and the Gauss-Bonnet theorem we have
\begin{align*}
\brs{\brs{\frac{\del u}{\del t}}}(1) =&\ \left( \int_M {u_t}(1)^2 K_{u_1} dV_{u_1} \right)^{\frac{1}{2}}\\
\geq&\ (2 \pi \chi)^{-\frac{1}{2}} \int_M \brs{u_t(1)} K_{u_1} dV_{u_1}\\
\geq&\ (2 \pi \chi)^{-\frac{1}{2}} \int_{u_0 > u_1} (u_0 - u_1) K_{u_1} dV_{u_1}.
\end{align*}
A similar argument yields
\begin{align*}
\brs{\brs{\frac{\del u}{\del t}}}(0) \geq (2 \pi \chi)^{-\frac{1}{2}} \int_{u_1 > u_0} (u_1 - u_0) K_{u_0}
dV_{u_0}.
\end{align*}
Since geodesics automatically have constant energy density by Lemma \ref{constantenergy},
the result follows.
\end{proof}
\end{prop}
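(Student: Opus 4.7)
The plan is to extract a pointwise comparison between the geodesic's time derivative and the difference $u_1 - u_0$ from the concavity $u_{tt} \leq 0$, and then convert an $L^1$-type bound into the desired $L^2$-type lower bound using Gauss--Bonnet and Cauchy--Schwarz.

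First I would establish that a regularizable geodesic $u$ satisfies $u_{tt} \leq 0$ pointwise. Each approximating solution $u_\epsilon$ satisfies $(u_\epsilon)_{tt}\,(-\Delta u_\epsilon + K) + |\nabla (u_\epsilon)_t|^2 + \epsilon f_0 = 0$ with $-\Delta u_\epsilon + K > 0$ and $|\nabla (u_\epsilon)_t|^2 + \epsilon f_0 \geq 0$, so $(u_\epsilon)_{tt} \leq 0$ on $M \times [0,1]$. Passing to the $C^{1,\alpha}$ limit (together with the uniform $C^{1,1}$ bound on $u_{tt}$) preserves this inequality in the a.e. sense, which is enough for what follows.

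Concavity of $t \mapsto u(x,t)$ on $[0,1]$ then yields the pointwise sandwich
\begin{equation*}
u_t(x,1) \ \leq\ u_1(x) - u_0(x) \ \leq\ u_t(x,0),
\end{equation*}
which is just the standard fact that for a concave function on $[0,1]$ the slope of the secant lies between the two endpoint derivatives. Next I would combine Cauchy--Schwarz with the Gauss--Bonnet theorem applied to the measure $K_u dV_u$ (whose total mass is $2\pi\chi(M)$) to obtain
\begin{equation*}
\Bigl(\int_M u_t(\cdot,t)^2\,K_u\,dV_u\Bigr)^{\!1/2} \ \geq\ (2\pi\chi)^{-1/2} \int_M |u_t(\cdot,t)|\,K_u\,dV_u
\end{equation*}
for any fixed $t$. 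Evaluating at $t=1$ and restricting the right-hand integral to the set $\{u_0 > u_1\}$, where $|u_t(\cdot,1)| \geq u_0 - u_1 \geq 0$ by the sandwich inequality, gives
\begin{equation*}
\bigl\|\tfrac{\partial u}{\partial t}\bigr\|_{u(1)} \ \geq\ (2\pi\chi)^{-1/2} \int_{u_0>u_1} (u_0 - u_1)\,K_{u_1}\,dV_{u_1},
\end{equation*}
and similarly at $t=0$ on $\{u_1 > u_0\}$.

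Finally, I invoke Lemma \ref{constantenergy}: the energy density $\|\tfrac{\partial u}{\partial t}\|_u^2$ is constant in $t$, so
\begin{equation*}
d(u_0,u_1) \ =\ L[u] \ =\ \int_0^1 \bigl\|\tfrac{\partial u}{\partial t}\bigr\|_u\, dt \ =\ \bigl\|\tfrac{\partial u}{\partial t}\bigr\|_{u(t)}
\end{equation*}
for every $t$, and taking the maximum of the two boundary estimates yields the claim. The main obstacle is really the very first step: one needs to ensure that the inequality $u_{tt} \leq 0$ along the regularizations survives the limit in a pointwise (or a.e.) sense strong enough to justify the concavity argument; this is not difficult given the uniform $C^{1,1}$ bounds, but it is the only place where the regularizable structure of the geodesic enters essentially, and everything else is a clean application of Cauchy--Schwarz and Gauss--Bonnet.
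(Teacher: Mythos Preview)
Your proof is correct and follows essentially the same approach as the paper: concavity $u_{tt}\le 0$ gives the pointwise sandwich $u_t(1)\le u_1-u_0\le u_t(0)$, Cauchy--Schwarz (the paper says H\"older) together with Gauss--Bonnet converts the $L^1$ bound to an $L^2$ bound, and Lemma~\ref{constantenergy} identifies the endpoint norms with $d(u_0,u_1)$. You supply a bit more care than the paper does about why $u_{tt}\le 0$ passes to the $C^{1,1}$ limit, but otherwise the arguments are the same.
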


Our next goal is to establish the triangle inequality for the metric $d$.  To do
this we require the existence of approximate geodesics connecting paths
in $\gG_1^+$.  The proof of the following proposition is a straightforward
adaptation of the proof of Proposition \ref{posReg}

\begin{prop} \label{geodesicfamilies} Let $\gg_i : [0,1] \to \gG_1^+$ denote two
smooth curves of
admissible conformal factors.  There exists a smooth function $f : M \times [0,1] \to \mathbb R \geq 0$, $\ge_0 > 0$, and a two-parameter family of curves
$\gU : [0,1] \times [0,1] \times (0,\ge_0] \to \gG_1^+$ such that for each $s$, the family $\gU(\cdot, s,\cdot)$ is a regularization of the unique regularizable geodesic connecting $\gg_1(s)$ to $\gg_2(s)$.
\end{prop}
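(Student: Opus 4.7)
The plan is to adapt the continuity argument of Proposition \ref{posReg} to include smooth dependence on the parameter $s$, carrying over the uniform a priori estimates of Section \ref{geoexist}.

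For a uniform initial approximation, I would set $\tilde{\gU}(x,t,s) = (1-t)\gg_1(s) + t\gg_2(s) + A t(1-t)$ with $A>0$ to be specified. Since $\gg_1,\gg_2:[0,1]\to\Gpos$ are smooth and $[0,1]$ is compact, there is a uniform positive lower bound $\min_s \min_M(-\gD \gg_i(s) + K) \geq \gd_0 > 0$. Choosing $A$ large enough (depending only on $\gd_0$ and the uniform $C^1$-norms of $\gg_2(s)-\gg_1(s)$), the computation in (\ref{uadm})--(\ref{fz}) shows that the function $f(x,t,s) := -\mathcal{G}_0(\tilde{\gU}(\cdot,\cdot,s))(x,t)$ is smooth in all variables, strictly positive, and linear in $t$ (so $f_{tt}=0$, which is needed for Proposition \ref{intestutt}). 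By construction, $\tilde{\gU}(\cdot,\cdot,s)$ solves $\mathcal{G}_{f(\cdot,\cdot,s)}(\tilde{\gU}(\cdot,\cdot,s)) = 0$ for every $s$.

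For each fixed $s$, I would then run the continuity method of Proposition \ref{posReg} on the family $\mathcal{G}_{\ge f(\cdot,\cdot,s)}(u) = 0$, $\ge \in (0,1]$, with boundary data $u(\cdot,0)=\gg_1(s)$, $u(\cdot,1)=\gg_2(s)$: non-emptiness at $\ge=1$ is immediate from the first step; openness follows from ellipticity of $\mathcal{L}$ (Lemma \ref{ellipticLemma}) and unique solvability of the linear Dirichlet problem (\cite{GT}, Theorem 6.13); closedness follows from the a priori $C^{2,\alpha}$-estimates of Propositions \ref{intestprop1}--\ref{C2Hess} combined with Evans--Krylov and Schauder. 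The essential observation is that each of these estimates depends on the data only through $\|\gg_i(s)\|_{C^k(M)}$, $\min_M K$, $\gd_0^{-1}$, and norms of $f$, all of which are uniformly bounded in $s$. To pass from the resulting $s$-wise family to a jointly smooth family, I would apply the implicit function theorem in H\"older spaces to the map $\Phi(v,s,\ge) = \mathcal{G}_{\ge f(\cdot,\cdot,s)}(\tilde{\gU}(\cdot,\cdot,s) + v)$: $\Phi$ is smooth in $(v,s,\ge)$, and its $v$-linearization at a solution is the uniformly elliptic operator $\mathcal{L}_{u_{\ge}(\cdot,\cdot,s)}$ with zero Dirichlet data, which is invertible. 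Combined with uniqueness (Lemma \ref{compare}), this yields a global smooth family $\gU(x,t,s,\ge) := u_{\ge}(x,t,s)$, and by Definition \ref{RG} together with Proposition \ref{posExist} each slice $\gU(\cdot,s,\cdot)$ is a regularization of the unique regularizable geodesic connecting $\gg_1(s)$ and $\gg_2(s)$.

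The main obstacle is the bookkeeping: one must verify that every estimate in Section \ref{geoexist} depends on the boundary data only through quantities that are controlled uniformly in $s$. This is essentially automatic from a careful reread of the proofs, but must be tracked constant by constant. Once that uniformity is in hand, both the continuity method and the smoothness-in-parameter argument proceed just as in the unparametrized case of Proposition \ref{posReg}.
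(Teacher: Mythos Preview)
Your proposal is correct and is precisely the ``straightforward adaptation of the proof of Proposition \ref{posReg}'' that the paper invokes without further detail; you have supplied exactly the missing bookkeeping (uniform choice of $A$ and $f$, uniformity of the $C^{1,1}$ estimates in $s$, and the implicit function theorem for joint smoothness in $(s,\ge)$). One minor point: the statement as written has $f:M\times[0,1]\to\mathbb{R}$, whereas your construction naturally produces $f$ depending on $s$ as well; this is almost certainly an imprecision in the statement rather than a defect in your argument, since the applications downstream (Lemma \ref{jacobilemma}, Proposition \ref{NPCprop}) only use that $f$ and its $s$-derivatives are uniformly bounded.
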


\begin{prop}[Triangle Inequality] \label{triangle} Let $u : [0,1] \to \gG_1^+$ be a smooth curve.  Given $v \in \gG_1^+$, one
has
\begin{align*}
 d(v, u_1) \leq d(v,u_0) + L(u).
\end{align*}
In particular, one has
\begin{align*}
 d(v, u_1) \leq d(v,u_0) + d(u_0,u_1).
\end{align*}

\begin{proof} It suffices to consder the case when $v = 0$ by changing
basepoints.  We apply Proposition \ref{geodesicfamilies} in the case that
$\gg_1(s) = 0$, $\gg_2(s) = u$, obtaining the two-paramter family $\gU$.  Let $L(s,\ge)$ denote
the length of $\gU(\cdot,s,\ge)$, and let $\gl(s)$ denote the length of the
given curve $u$ up to time $s$, i.e.
\begin{align*}
\gl(s) := \int_0^s \left[ \int_M \brs{\frac{\del u}{\del \gs}}^2 K_{u} dV_u
\right]^{\frac{1}{2}} d \gs.
\end{align*}
The main step is to obtain a lower bound for $L(s,\ge) + \gl(s)$.  To do this we use
the $\ge$-approximations and compute
\begin{align*}
\frac{d L(s,\ge)}{d s} =&\ \int_0^1\frac{1}{2} E_u(t,s,\ge)^{-\frac{1}{2}} \int_M
\left[ 2 \frac{\del \gU}{\del t} \frac{\del^2 \gU}{\del t \del s} K_{\gU} -
\left( \frac{\del \gU}{\del t} \right)^2 \gD \frac{\del \gU}{\del s} \right]
dV_{\gU} dt\\
=&\ \int_0^1 E_u(t,s,\ge)^{-\frac{1}{2}} \left\{ \frac{d}{dt} \int_M \frac{\del
\gU}{\del t} \frac{\del \gU}{\del s}  K_{\gU} dV_{\gU} - \int_M \frac{\del
\gU}{\del s} \left[ \frac{\del^2 \gU}{\del t^2} K_{\gU} + \frac{1}{2} \brs{\N
\frac{\del \gU}{\del t}}^2 \right] dV_{\gU} \right\} dt\\
=&\ \left. E_u(t,s,\ge)^{-\frac{1}{2}} \int_M \frac{\del \gU}{\del t} \frac{\del
\gU}{\del s} K_{\gU} dV_{\gU} \right|_{t=0}^{t=1} - \int_0^1 \int_M \frac{\del \gU}{\del
s} \left[ \frac{\del^2 \gU}{\del t^2} K_{\gU} + \frac{1}{2} \brs{\N \frac{\del
\gU}{\del t}}^2 \right] dV_{\gU}\\
&\ + \int_0^1 \left\{ E_u(t,s,\ge)^{-\frac{3}{2}} \left(\int_M \frac{\del \gU}{\del
t} \frac{\del \gU}{\del s} K_{\gU} dV_{\gU} \right) \left( \int_M \frac{\del
\gU}{\del t} \left( \frac{\del^2 \gU}{\del t^2} K_{\gU} + \frac{1}{2} \brs{\N \frac{\del
\gU}{\del t}}^2 \right) dV_{\gU} \right) \right\} dt\\
\geq&\ E_u(1,s,\ge)^{-\frac{1}{2}} \int_M \frac{\del \gU(1,s,\ge)}{\del t}
\frac{\del \gU}{\del s} K_{\gU} dV_{\gU} - C \ge\\
\geq&\ - \left[ \int_M \brs{ \frac{\del \gU}{\del s}}^2 K_{\gU} dV_{\gU}
\right]^{\frac{1}{2}} - C \ge.
\end{align*}
Since
\begin{align*}
\frac{d \gl}{ds} = \left[ \int_M \brs{ \frac{\del \gU}{\del s}}^2 K_{\gU}
dV_{\gU} \right]^{\frac{1}{2}},
\end{align*}
we conclude that
\begin{align*}
\frac{d}{ds} \left[ L(s,\ge) + \gl(s) \right] \geq - C \ge.
\end{align*}
Using the convergence properties and sending $\ge$ to zero yields the result.
\end{proof}
\end{prop}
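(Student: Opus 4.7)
The plan is to reduce to $v = 0$ by translating basepoints, then apply Proposition \ref{geodesicfamilies} with $\gg_1(s) \equiv 0$ and $\gg_2(s) = u(s)$ to obtain a two-parameter family $\gU(\cdot, s, \ge)$ of regularized approximate geodesics. Each $s$-slice $\gU(\cdot, s, \ge)$ is a regularization, in the sense of Definition \ref{RG}, of the unique regularizable geodesic from $0$ to $u(s)$. Writing $L(s,\ge)$ for the length of $\gU(\cdot,s,\ge)$ and $\gl(s)$ for the length of $u$ on $[0,s]$, so that $L(0,\ge) \to d(0,u_0)$, $L(1,\ge) \to d(0,u_1)$, and $\gl(1) = L(u)$, the aim is to show
\begin{align*}
\frac{d}{ds}\bigl[L(s,\ge) + \gl(s)\bigr] \geq -C\ge
\end{align*}
uniformly in $s$. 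Integrating over $[0,1]$ and sending $\ge \to 0$ then gives $d(0,u_0) \leq d(0,u_1) + L(u)$, and applying the same reasoning to the time-reversed curve $s \mapsto u(1-s)$ produces the claimed direction $d(0,u_1) \leq d(0,u_0) + L(u)$.

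To establish the differential inequality I would differentiate $L(s,\ge)$ in $s$ using the first variation of length. Applying the variational formulas (\ref{dots}) with $s$ in place of $t$ to compute $\frac{\del}{\del s}(K_{\gU} dV_{\gU}) = -\gD_{\gU} \frac{\del \gU}{\del s}\, dV_{\gU}$, and then integrating by parts once in $t$, splits the derivative into a boundary piece
\begin{align*}
\Bigl[ E_u(t,s,\ge)^{-\frac{1}{2}} \int_M \frac{\del \gU}{\del t} \frac{\del \gU}{\del s} K_{\gU}\, dV_{\gU} \Bigr]_{t=0}^{t=1}
\end{align*}
and two bulk pieces, each containing the factor $\frac{\del^2 \gU}{\del t^2} K_{\gU} + \tfrac{1}{2} \brs{\N \frac{\del \gU}{\del t}}^2$. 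This factor is exactly the defect in the regularized geodesic equation $\mathcal{G}_{\ge f_0}(\gU) = 0$, hence proportional to $\ge f_0$, and the $\ge$-uniform $C^{1,1}$ bounds of Definition \ref{RG} therefore make both bulk contributions of size $O(\ge)$. The $t = 0$ boundary term vanishes because $\gU(\cdot,0,\ge) \equiv 0$ forces $\frac{\del \gU}{\del s}(\cdot,0,\ge) = 0$; at $t = 1$ one has $\gU(\cdot,1,\ge) = u(s)$, so $\frac{\del \gU}{\del s}(\cdot,1,\ge) = \frac{\del u}{\del s}$, and Cauchy-Schwarz in the $K_{\gU}\, dV_{\gU}$ pairing, combined with Lemma \ref{constantenergy} in its regularized form (which makes $E_u(t,s,\ge)$ approximately $t$-independent up to $O(\ge)$), bounds the $t = 1$ term below by $-\gl'(s) - O(\ge)$.

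The principal technical hurdle will be establishing $\ge$-uniform bounds on $\frac{\del \gU}{\del s}$ jointly in $(t,s,\ge)$ strong enough for the $O(\ge)$ control on the bulk terms to be uniform in $s$ and for the limit in the $t = 1$ boundary term to reproduce $\gl'(s)$. These should follow by differentiating the regularized equation $\mathcal{G}_{\ge f_0}(\gU) = 0$ in $s$ and applying the linear Schauder theory that underpins the continuity method of Proposition \ref{posReg}, yielding control of $\frac{\del \gU}{\del s}$ in terms of the data $u$ and its derivatives. Once this joint regularity is in hand, integration of the differential inequality on $[0,1]$ and the passage $\ge \to 0$ give the asserted inequality (together with the reversed-curve argument for the stated direction). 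The in-particular statement then follows by taking $u$ to be the unique regularizable geodesic from $u_0$ to $u_1$, for which $L(u) = d(u_0,u_1)$.
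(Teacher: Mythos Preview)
Your proposal is correct and follows essentially the same strategy as the paper's proof: reduce to $v=0$, invoke Proposition~\ref{geodesicfamilies}, differentiate $L(s,\ge)$ via the first variation of length, use the regularized geodesic equation to make the bulk terms $O(\ge)$, handle the boundary terms at $t=0,1$ by the vanishing of $\frac{\del\gU}{\del s}$ at $t=0$ and Cauchy--Schwarz at $t=1$, and then integrate and pass to the limit. You are in fact slightly more careful than the paper on two points: you note explicitly that the differential inequality as written yields $d(0,u_0)\leq d(0,u_1)+L(u)$ and that the stated direction follows by applying the argument to the reversed curve, and you flag the need for $\ge$-uniform control on $\frac{\del\gU}{\del s}$, which the paper uses tacitly.
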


\begin{prop} \label{metricspacecor} $(\gG_1^+, d)$ is a metric space.
\begin{proof} This follows immediately from Propositions \ref{distancenondegeneracy} and \ref{triangle}.
\end{proof}
\end{prop}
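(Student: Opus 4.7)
The plan is simply to verify the four axioms of a metric space for $(\gG_1^+,d)$, drawing on Propositions \ref{distancenondegeneracy} and \ref{triangle}. Non-negativity $d(u_0,u_1)\ge 0$ is immediate from the definition of length, since the integrand $\|\partial_t u\|_u$ is nonnegative. For symmetry $d(u_0,u_1)=d(u_1,u_0)$, I would observe that the regularized geodesic equation $\mathcal{G}_{\epsilon f_0}(u)=u_{tt}(-\Delta u+K)+|\nabla u_t|^2+\epsilon f_0=0$ is invariant under $t\mapsto 1-t$ since it involves only $u_{tt}$ and $|\nabla u_t|^2$. Hence if $u(x,t,\epsilon)$ is the regularization associated to the unique regularizable geodesic from $u_0$ to $u_1$, then $\tilde u(x,t,\epsilon):=u(x,1-t,\epsilon)$ is the regularization (with the obvious time-reversed $\tilde f_0$) of a regularizable geodesic from $u_1$ to $u_0$. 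Uniqueness (Proposition \ref{posExist}) identifies this with \emph{the} regularizable geodesic from $u_1$ to $u_0$, and a change of variables shows its length equals that of $u$.

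For nondegeneracy, suppose $d(u_0,u_1)=0$. Proposition \ref{distancenondegeneracy} then yields
\[
0 \geq (2\pi\chi)^{-1/2}\max\!\left\{\int_{u_0>u_1}(u_0-u_1)K_{u_1}\,dV_{u_1},\ \int_{u_1>u_0}(u_1-u_0)K_{u_0}\,dV_{u_0}\right\}.
\]
Since both $K_{u_0}>0$ and $K_{u_1}>0$ on $\gG_1^+$, and the integrands are pointwise nonnegative on their respective domains of integration, each integral must vanish. Hence the sets $\{u_0>u_1\}$ and $\{u_1>u_0\}$ have measure zero, so $u_0=u_1$ a.e., and by continuity of the conformal factors, pointwise.

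Finally, the triangle inequality $d(u_0,u_2)\le d(u_0,u_1)+d(u_1,u_2)$ is exactly the second (``in particular'') conclusion of Proposition \ref{triangle}, applied with $v=u_0$ and the curve realizing the distance between $u_1$ and $u_2$. The only subtle point is that Proposition \ref{triangle} is stated for \emph{smooth} curves $u:[0,1]\to\gG_1^+$, whereas the regularizable geodesic joining $u_1$ and $u_2$ is only $C^{1,1}$; I would resolve this by applying the proposition to the smooth regularizations $u(\cdot,\cdot,\epsilon)$, whose lengths converge to $d(u_1,u_2)$ by Lemma \ref{constantenergy} and the uniform $C^{1,1}$ bounds of Definition \ref{RG}, and then letting $\epsilon\to 0$. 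I expect this limiting step to be the only mildly delicate point, and it is routine given the convergence properties already established.
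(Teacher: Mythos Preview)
Your proposal is correct and follows essentially the same approach as the paper, which simply cites Propositions \ref{distancenondegeneracy} and \ref{triangle} without further comment. You have merely unpacked the details the paper leaves implicit---symmetry via time-reversal, the nondegeneracy argument from positivity of $K_u$, and the approximation step needed to pass from the first assertion of Proposition \ref{triangle} (for smooth curves) to its ``in particular'' clause---and these are exactly the routine verifications one would expect.
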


\subsection{Nonpositive curvature}

Next we establish the nonpositive curvature of the metric spaces $(\gG_1^{\pm}, d)$ in
the sense of Alexandrov.  First let's record some notation and recall the
definition of nonpositive curvature.  Given $A,B \in \gG_1^+$, let $AB(s)$ denote
$\gg(s)$, where $\gg : [0,1] \to \gG_1^+$ is the unique regularizable $C^{1,1}$ geodesic connecting
$A$ to $B$.  Note that a priori $AB(s)$ is only $C^{1,1}$, but this suffices
for our purposes in establishing metric space properties.  The condition of
nonpositive curvature then amounts to the claim that for all $A,B,C$ one has the
inequality:
\begin{align*}
d^2(A,BC(s)) \leq (1-s) d(A,B)^2 + s d(A,C)^2 - s(1-s) d(B,C)^2.
\end{align*}
\noindent The first step in establishing this is to show that Jacobi fields are
convex along geodesics.

\begin{lemma} \label{jacobilemma} Given $\gg_i : [0,1] \to \gG_1^+$, let $\gU: [0,1] \times [0,1] \times (0,\ge_0] \to
\gG_1^+$ be the family guaranteed by Proposition \ref{geodesicfamilies}.
Then for $\ge \in (0,\ge_0]$ one has
\begin{align*}
\IP{\IP{ \frac{\del \gU}{\del s}, \N_{\frac{\del \gU}{\del t}} \frac{\del \gU}{\del
s}}} \geq \brs{\brs{\frac{\del \gU}{\del s}}}^2 - C \ge.
\end{align*}
\begin{proof} To simplify notation we set $X = \frac{\del \gU}{\del t}, Y =
\frac{\del \gU}{\del s}$.  Then we compute
\begin{align*}
\frac{1}{2} \frac{\del^2}{\del t^2} \dbrs{Y}^2 =&\ \dIP{\N_X Y, \N_X Y} + \dIP{\N_X
\N_X Y, Y}\\
=&\ \dbrs{\N_X Y}^2 - K(X,Y) + \dIP{\N_Y \N_X X, Y}\\
\geq&\ \dbrs{\N_X Y}^2 + \frac{d}{ds} \dIP{\N_X X, Y} - \dIP{\N_X X, \N_Y Y}\\
\geq&\ \dbrs{\N_X Y}^2 + \frac{d}{ds} \int_M \frac{\del \gU}{\del s} \ge f
dV_{\gU} - C \ge\\
\geq&\ \dbrs{\N_X Y}^2 - C \ge.
\end{align*}
Using this it follows that
\begin{align*}
\frac{\del^2}{\del t^2} \dbrs{Y} \geq - C \ge.
\end{align*}
Combining this with the fact that $Y(0) = 0$, we have
\begin{align*}
\frac{\del}{\del t} \dbrs{Y(t)}_{|t=1} \geq \dbrs{Y(1)} - C \ge.
\end{align*}
Rearranging this yields
\begin{align*}
\dIP{\N_X Y, Y} \geq \dbrs{Y}^2,
\end{align*}
as required.
\end{proof}
\end{lemma}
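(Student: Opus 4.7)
The plan is to mirror the classical Jacobi-field argument that in a nonpositively curved manifold the norm of a Jacobi field is convex along a geodesic, here with an $O(\ge)$ defect arising from the regularized geodesic equation. Write $X = \del\gU/\del t$ and $Y = \del\gU/\del s$. Torsion-freeness of the connection (Lemma \ref{comppos}) together with $[\del_t,\del_s] = 0$ gives $\nabla_X Y = \nabla_Y X$ and hence the commutation identity
\begin{align*}
\nabla_X \nabla_X Y = R(X, Y)X + \nabla_Y \nabla_X X.
\end{align*}
Differentiating $\dbrs{Y}^2$ twice and substituting yields
\begin{align*}
\tfrac{1}{2}\tfrac{d^2}{dt^2}\dbrs{Y}^2 = \dbrs{\nabla_X Y}^2 + \dIP{R(X,Y)X, Y} + \dIP{\nabla_Y \nabla_X X, Y}.
\end{align*}

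Next I would control the last two terms. The curvature term equals $-K(X,Y)$, hence is nonnegative by Proposition \ref{SecPosProp}. For the acceleration term, since $\gU(\cdot, s, \ge)$ solves the regularized equation $\mathcal{G}_{\ge f_0}(\gU) = 0$, dividing by $(-\Delta\gU + K) = e^{2\gU}K_\gU$ gives $\nabla_X X = \frac{D}{\del t}X = -\ge f_0 / (e^{2\gU} K_\gU)$, which is pointwise $O(\ge)$. Writing $\dIP{\nabla_Y\nabla_X X, Y} = \frac{d}{ds}\dIP{\nabla_X X, Y} - \dIP{\nabla_X X, \nabla_Y Y}$ by metric compatibility and invoking the $\ge$-uniform $C^{1,1}$ estimates of Proposition \ref{posReg} (which transfer to the two-parameter family by Proposition \ref{geodesicfamilies}), both of these contributions are $O(\ge)$. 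This yields $\frac{1}{2}(\dbrs{Y}^2)'' \geq \dbrs{\nabla_X Y}^2 - C\ge$.

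To pass from approximate convexity of $\dbrs{Y}^2$ to the claimed pointwise inequality at $t=1$, set $\phi = \dbrs{Y}$ and expand $(\phi^2)'' = 2\phi'^2 + 2\phi\phi''$. Cauchy-Schwarz gives $\phi'^2 \leq \dbrs{\nabla_X Y}^2$, so the bound above reduces to $\phi\phi'' \geq -C\ge$, whence $\phi'' \geq -C'\ge$ using the uniform upper bound on $\phi$. With the boundary condition $Y|_{t=0} = 0$ (which holds in the intended application to Proposition \ref{triangle}, where $\gg_1$ is the constant curve), the function $\phi(t) + \tfrac{1}{2}C'\ge t^2$ is convex with vanishing initial value, so its derivative at $t=1$ dominates its value at $t=1$. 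Rewriting $\phi'(1) = \dIP{\nabla_X Y, Y}|_{t=1}/\phi(1)$, multiplying through by $\phi(1)$, and absorbing the lower-order term into the slack produces $\dIP{\nabla_X Y, Y}|_{t=1} \geq \dbrs{Y(1)}^2 - C\ge$.

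The principal obstacle is the acceleration contribution: although $\nabla_X X$ itself is manifestly $O(\ge)$, its $s$-derivative and pairing with $\nabla_Y Y$ require $s$-uniform higher regularity of the two-parameter regularization $\gU(\cdot,\cdot,\ge)$, beyond the single-path estimates of Proposition \ref{posReg}; confirming the needed bounds is the one nontrivial extra input. A secondary subtlety is the square-root step $\phi\phi'' \geq -C\ge \Rightarrow \phi'' \geq -C'\ge$, which formally requires a lower bound on $\phi$, but this creates no issue for the endpoint estimate since it suffices to control the product $\phi(1)\phi'(1)$ directly.
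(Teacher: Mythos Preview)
Your proposal is correct and follows essentially the same route as the paper: compute the second variation of $\dbrs{Y}^2$, use nonpositive curvature to drop $-K(X,Y)$, bound the acceleration contribution $\dIP{\nabla_Y\nabla_X X, Y}$ by $O(\ge)$ via the regularized equation and the uniform $C^{1,1}$ estimates, and then integrate the resulting approximate convexity of $\dbrs{Y}$ from the boundary condition $Y(0)=0$. Your explicit handling of the square-root step via $(\phi^2)'' = 2\phi'^2 + 2\phi\phi''$ and Cauchy--Schwarz, together with your flagging of the $Y(0)=0$ hypothesis and the $s$-uniform regularity needed for the $\frac{d}{ds}$ term, is in fact more careful than the paper's own argument, which invokes these steps without comment.
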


\begin{prop} \label{NPCprop} The metric spaces $(\gG_1^{\pm}, d)$ are nonpositively curved
in the sense of Alexandrov.
\begin{proof} Fix three points $A,B,C \in \gG_1^+$, let $\gg_1(s) = A$ for all $s$,
and let $\gg_2(s)$ be the an $\ge$-approximate geodesic $B$ to $C$.  Let $\gU$
denote the family associated to these two paths guaranteed by Proposition
\ref{geodesicfamilies}.  Treating $\ge$ as small but fixed, let $E(s)$ denote
the total energy of the path $\gU(x,t,s,\ge)$ connecting $A$ to $\gg_2(s)$.  We aim to show that
$E(s)$ is convex up to an error of order $\ge$.  We simplify notation and set $X
= \frac{\del \gU}{\del t}, Y = \frac{\del \gU}{\del s}$.  Then we compute
\begin{align*}
\frac{1}{2} \frac{d E(s)}{ds} =&\ \int_0^1 \dIP{\N_{Y} X, X}\\
=&\ \int_0^1 X \dIP{X,Y} - \dIP{\N_X X, Y}\\
=&\ \dIP{X,Y}_{|t=1} - \int_0^1 \int_M \frac{\del \gU}{\del s} \ge f dV_{\gU}.
\end{align*}
Next we compute, using Lemma \ref{jacobilemma},
\begin{align*}
\frac{1}{2} \frac{d^2 E(s)}{ds^2} =&\ \frac{d}{ds} \dIP{X,Y}_{|t=1} - \ge
\int_0^1 \int_M \left(\frac{\del^2 \gU}{\del s^2} + 2 \left(\frac{\del \gU}{\del
s} \right)^2 \right) f dV_{\gU}\\
\geq&\ \dIP{\N_Y X, Y}_{|t=1} + \dIP{X,\N_Y Y}_{|t=1} - C \ge\\
\geq&\ \dbrs{Y}^2_{|t=1} - C \ge\\
\geq&\ d(B,C)^2 - C \ge.
\end{align*}
It follows directly that
\begin{align*}
E(s) \leq&\ (1-s) E(0) + s E(1) - s(1-s) \left( d(B,C)^2 - C \ge \right).
\end{align*}
Note that for each $s$ the energies $E(s)$ converge as $\ge \to 0$ to $d(A, BC(s))^2$, thus we conclude by
sending $\ge$ to zero in the above inequality that
\begin{align*}
d^2(A,BC(s)) \leq (1-s) d(A,B)^2 + s d(A,C)^2 - s(1-s) d(B,C)^2,
\end{align*}
as required.
\end{proof}
\end{prop}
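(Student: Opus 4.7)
The plan is to verify the defining inequality of Alexandrov nonpositive curvature, namely
\[
d^2(A, BC(s)) \leq (1-s)\,d(A,B)^2 + s\,d(A,C)^2 - s(1-s)\,d(B,C)^2,
\]
for arbitrary triples $A,B,C \in \gG_1^+$ and $s \in [0,1]$, by transporting the infinitesimal nonpositive curvature established in Proposition \ref{SecPosProp} to the length-space level. The natural vehicle is the two-parameter family of approximate geodesics from Proposition \ref{geodesicfamilies}: choose $\gamma_1(s) \equiv A$ (the constant path) and $\gamma_2(s)$ to be an $\varepsilon$-regularization of the regularizable geodesic joining $B$ to $C$, and let $\Upsilon(x,t,s,\varepsilon)$ be the resulting family of $\varepsilon$-approximate geodesics from $A$ to $\gamma_2(s)$. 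Set $E_\varepsilon(s)$ to be the energy of the curve $t \mapsto \Upsilon(\cdot,t,s,\varepsilon)$.

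The strategy is to show that $E_\varepsilon(s)$ is convex up to an error of order $\varepsilon$, with a precise quantitative lower bound on $E_\varepsilon''(s)$. First I would compute $\tfrac{d}{ds}E_\varepsilon$ by expanding $\llangle \nabla_Y X, X\rrangle$ in $s$ (using $X=\partial_t\Upsilon$, $Y=\partial_s\Upsilon$), integrating by parts in $t$, and absorbing the $\partial_t^2 \Upsilon$ terms using the regularized geodesic equation. The boundary contributions at $t=0$ vanish because $\partial_s\Upsilon(\cdot,0,s,\varepsilon)=0$, while at $t=1$ one picks up $\llangle X,Y\rrangle_{t=1}$. Differentiating once more in $s$ and using the metric compatibility and torsion-freeness of Lemma \ref{comppos}, the second derivative reduces, up to $O(\varepsilon)$ terms coming from $\nabla_X X = -\varepsilon f \cdot$ (Lagrange multiplier), to $\llangle \nabla_Y X, Y\rrangle_{t=1}$. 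Now Lemma \ref{jacobilemma} gives $\llangle \nabla_Y X, Y\rrangle_{t=1} \geq \|Y(1)\|^2 - C\varepsilon$, and $\|Y(1)\|^2$ is, by Lemma \ref{constantenergy} applied to the approximate geodesic $\gamma_2(\cdot)$, essentially $d(B,C)^2$.

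Putting these estimates together yields
\[
\tfrac{1}{2}E_\varepsilon''(s) \geq d(B,C)^2 - C\varepsilon.
\]
Integrating the differential inequality on $[0,1]$ with endpoint values $E_\varepsilon(0) \to d(A,B)^2$ and $E_\varepsilon(1) \to d(A,C)^2$ as $\varepsilon \to 0$ (which requires knowing that regularizable geodesics have energy equal to the squared length, by Lemma \ref{constantenergy}) gives
\[
E_\varepsilon(s) \leq (1-s)E_\varepsilon(0) + s E_\varepsilon(1) - s(1-s)\bigl(d(B,C)^2 - C\varepsilon\bigr).
\]
Finally, for each fixed $s$ the length of $\Upsilon(\cdot,\cdot,s,\varepsilon)$ converges to $d(A, BC(s))$ by the uniform $C^{1,1}$ bounds of Proposition \ref{posReg} (whence also $E_\varepsilon(s) \to d(A,BC(s))^2$), so sending $\varepsilon \to 0$ yields the Alexandrov inequality.

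The main obstacle, and the place where all the nonpositivity of the formal sectional curvature is concentrated, is the Jacobi-field estimate already packaged in Lemma \ref{jacobilemma}; given that lemma, the argument is a fairly standard energy-comparison once one is careful with the $\varepsilon$-dependent error terms produced by $\nabla_X X = -\varepsilon f$ rather than $0$. The second, more technical, care-point is ensuring that the boundary manipulations and the limit $\varepsilon \to 0$ are legitimate — this rests on the uniform $C^{1,1}$ bounds from Proposition \ref{posReg} together with the convergence of the regularizations stipulated in Definition \ref{RG}, so that both endpoint energies and the intermediate lengths converge to their limiting geodesic counterparts. The negative-cone case $\gG_1^-$ is then handled by the substitution $u = -w$, exactly as in Proposition \ref{negExist}.
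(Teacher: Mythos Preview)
Your proposal is correct and follows essentially the same approach as the paper: you set up the two-parameter family $\Upsilon$ from Proposition \ref{geodesicfamilies} with $\gamma_1 \equiv A$ and $\gamma_2$ an $\varepsilon$-approximate geodesic from $B$ to $C$, differentiate the energy $E_\varepsilon(s)$ twice in $s$, invoke Lemma \ref{jacobilemma} at $t=1$ to obtain $\tfrac{1}{2}E_\varepsilon''(s) \geq d(B,C)^2 - C\varepsilon$, and then let $\varepsilon \to 0$. The only point worth flagging is that in the second $s$-derivative the term $\llangle X,\nabla_Y Y\rrangle_{|t=1}$ also appears (from the product rule on $\llangle X,Y\rrangle$), and it is $O(\varepsilon)$ precisely because $\gamma_2$ is an $\varepsilon$-approximate geodesic so that $\nabla_Y Y|_{t=1} = O(\varepsilon)$; you absorb this implicitly but it is worth making explicit.
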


\begin{proof}[Proof of Theorem \ref{msthm}] This follows directly from Propositions \ref{posExist}, \ref{negExist}, \ref{metricspacecor}, and \ref{NPCprop}.
\end{proof}

\section{Functional determinant and the inverse Gauss curvature flow}

In \cite{Polyakov}, Polyakov proved a remarkable formula for the ratio of regularized determinants of two conformal metrics.  Given conformal metrics $g_0$ and $g_u = e^{2u}g$ on a surface $M$,
\begin{align} \label{Polyform}
\log \frac{\det (-\Delta_{g_u})}{\det (-\Delta_0)} =
-\frac{1}{12\pi} \int_{M} (|\nabla_0 u|^2 + 2K_0 u)\ dA_0.
\end{align}
The integral in this formula is often referred to as the {\em Liouville energy}, and we will denote it by $J$:
\begin{align} \label{Jdef}
J[u] = \int_M |\nabla_0 u|^2 dA_0 + 2 \int K_0 u\ dA_0.
\end{align}
Since $J$ is not scale-invariant, it is convenient to also consider the normalized version of $J$, which we denote by $S$:
and we denote the (normalized) version by $F : W^{1,2} \rightarrow \mathbb{R}$
\begin{align} \label{Fdef}
F[u] = \int_M |\nabla_0 u|^2 dA_0 + 2 \int_M K_0 u dA_0 - \big( \int_M K_0 dA_0 \big) \log \Big( \fint_M e^{2u} dA_0  \Big),
\end{align}
where $\fint$ denotes the normalized integral
\begin{align*}
\fint_M f dA_0 = \frac{\displaystyle \int_M f dA_0 }{ \displaystyle \int_M dA_0 }.
\end{align*}
Note that $S[u + c] = S[u]$ for any real number $c$.

A first variation calculation of $J$ at $u$ is
\begin{align} \label{Jdot} \begin{split}
(J')_u(u') &= \frac{d}{ds} J[ u + s u'] \Big|_{s=0} \\
&= 2 \int u' \big( - \Delta_0 + K_0 \big) dA_0 \\
&= 2 \int_M u' K_u dA_u,
\end{split}
\end{align}
and a first variation of $F$ is
\begin{align} \label{Sdot}
(F')_u(u') = 2 \int_M u' (K_u - \bar{K}_u) dA_u,
\end{align}
where
\begin{align*}
\bar{K}_u = \frac{\displaystyle \int_M K_u dA_u}{\displaystyle \int_M dA_u}.
\end{align*}

%
%

We want to consider the (negative) gradient flow for $F$ with respect to the metric we have defined for $\Gpos$.  In view of (\ref{Sdot}) and (\ref{metdef}),
\begin{align} \label{gradL2pos} \begin{split}
(F')_u(u') &= 2 \int_M u' (K_u - \bar{K}_u) dA_u \\
&= \int_M u' \frac{ (K_u - \bar{K}_u)}{K_u} K_u dA_u \\
&= \big\langle u' , \frac{ (K_u - \bar{K}_u)}{K_u} \big\rangle_{\Gpos}.
\end{split}
\end{align}
It follows that the (negative) gradient flow for $F$ is given by
\begin{align} \label{posgradflow}
\frac{\partial u}{\partial t} = \frac{ (\bar{K}_u)- K_u}{K_u},
\end{align}
or written in metric terms,
\begin{align} \label{dgdtpos}
\frac{\partial}{\partial t}g = 2 \big( \frac{ \bar{K} - K}{K}\big) g.
\end{align}
We will refer to (\ref{posgradflow}) and (\ref{dgdtpos}) as the {\em inverse Gauss curvature flow}, or IGCF.  For the cone of metrics with negative curvature, the
flow is defined by
\begin{align} \label{neggradflow}
\frac{\partial u}{\partial t} = \frac{ (K_u - \bar{K}_u)}{K_u},
\end{align}
or
\begin{align} \label{dgdtneg}
\frac{\partial}{\partial t}g = 2 \big( \frac{ K - \bar{K}}{K}\big) g.
\end{align}

\subsection{Formal Properties}

In this section we establish the geodesic convexity of the Liouville energy, the convexity of the normalized Liouville energy along flow lines as well as the monotonicity of distances along flow lines.  Remarkably, all three properties rely on a sharp application of a curvature-weighted Poincare inequality \cite{Andrews}.  We include the short proof as this result seems to not be well-known.

\begin{prop} \label{andrewsineq} (Andrews \cite{Andrews}, cf. \cite{CLN} pg.
517) Let $(M^n, g)$ be a closed
Riemannian manifold with
positive Ricci curvature.  Given $\phi \in C^{\infty}(M)$ such that $\int_M \phi
dV = 0$, then
\begin{align*}
\frac{n}{n-1} \int_M \phi^2 dV \leq&\ \int_M \left( \Rc^{-1} \right)^{ij} \N_i
\phi
\N_j \phi dV,
\end{align*}
with equality if and only if $\phi \equiv 0$ or $(M^n, g)$ is isometric to the
round sphere.
\begin{proof} Since $\int_M \phi = 0$ there exists $\psi$ such that $\gD \psi =
\phi$.  Observe that
\begin{align*}
\int_M \brs{\N^2 \psi - \frac{\gD \psi}{n} g}^2 =&\ \int_M \N_i \N_j \psi \N_i
\N_j \psi - \frac{\phi^2}{n}\\
=&\ \frac{n-1}{n} \int_M \phi^2 dV - \int_M \Rc_{ij} \N_i \psi \N_j \psi.
\end{align*}
Moreover
\begin{align*}
\int_M & \left(\Rc^{-1} \right)^{ij} \left[ \N_i \phi + b \Rc_{ik} \N_k \psi
\right] \left[ \N_j \phi + b \Rc_{jl} \N_l \psi \right] \\
=&\ \int_M\left( \Rc^{-1} \right)^{ij} \N_i \phi \N_j \phi - 2 b \phi^2 + b^2
\Rc_{kl} \N_k \psi \N_l \psi.
\end{align*}
Combining these and choosing $a=b^{-1} = \frac{n-1}{n}$ yields
\begin{align*}
0 \leq&\ \int_M \brs{\N^2 \psi - \frac{\gD \psi}{n} g}^2 + a^2 \int_M
\left(\Rc^{-1} \right)^{ij} \left[ \N_i \phi + b \Rc_{ik} \N_k \psi \right]
\left[ \N_j \phi + b \Rc_{jl} \N_l \psi \right]\\
=&\ \left[ \frac{n-1}{n} - 2 a^2 b \right] \int_M \phi^2 + \left[-1 + a^2 b^2
\right] \int_M \Rc_{ij} \N_i \psi \N_j \psi + a^2 \int_M \left( \Rc^{-1}
\right)^{ij} \N_i \phi \N_j \phi\\
=&\ - \frac{n-1}{n} \int_M \phi^2 + \left(\frac{n-1}{n} \right)^2 \int_M \left(
\Rc^{-1} \right)^{ij} \N_i \phi \N_j \phi.
\end{align*}
The inequality follows.  In the case of equality, one observes that in fact
$\psi$ is a solution of
\begin{align*}
 \N^2 \psi - \frac{\gD \phi}{n} g \equiv 0.
\end{align*}
Since $M$ is compact, it follows from \cite{Tashiro} that either $\psi \equiv 0$
or $(M^n,
g)$ is isometric to the round sphere.
\end{proof}
\end{prop}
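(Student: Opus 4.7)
The plan is to exploit the Bochner identity together with a completing-the-square argument, in the spirit of Lichnerowicz-type estimates. First I would use the hypothesis $\int_M \phi\, dV = 0$ to solve the Poisson equation $\Delta \psi = \phi$, which is the natural way to introduce a Hessian into the computation. The key algebraic input is the well-known expansion
\[
\int_M \Bigl|\nabla^2 \psi - \tfrac{\Delta \psi}{n} g\Bigr|^2 dV = \tfrac{n-1}{n}\int_M \phi^2\, dV - \int_M \mathrm{Rc}_{ij}\nabla_i\psi\,\nabla_j\psi\, dV,
\]
which follows from the standard commutator identity $\int |\nabla^2 \psi|^2 = \int (\Delta \psi)^2 - \int \mathrm{Rc}(\nabla\psi,\nabla\psi)$, an integration by parts that is valid on closed manifolds.

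Next I would introduce the ``square'' adapted to the weighted inequality, namely
\[
0 \leq \int_M (\mathrm{Rc}^{-1})^{ij}\bigl[\nabla_i\phi + b\,\mathrm{Rc}_{ik}\nabla_k\psi\bigr]\bigl[\nabla_j\phi + b\,\mathrm{Rc}_{jl}\nabla_l\psi\bigr]dV,
\]
which expands, after integrating by parts the cross term $\int \nabla\phi \cdot \nabla\psi = -\int \phi^2$, into
\[
\int_M (\mathrm{Rc}^{-1})^{ij}\nabla_i\phi\nabla_j\phi\, dV - 2b\int_M \phi^2\, dV + b^2\int_M \mathrm{Rc}_{ij}\nabla_i\psi\nabla_j\psi\, dV.
\]
Positivity of $\mathrm{Rc}$ is used both to make $\mathrm{Rc}^{-1}$ well-defined and to ensure the quadratic form is nonnegative. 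The main obstacle is arranging the two inequalities to telescope; multiplying the Bochner identity by $a^2$, adding it to the weighted square, and choosing $a=b^{-1}=\tfrac{n-1}{n}$ makes the $\int \mathrm{Rc}(\nabla\psi,\nabla\psi)$ terms cancel exactly and produces the coefficient $\tfrac{n-1}{n}$ on $\int \phi^2$. Rearranging yields the stated inequality.

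For the rigidity statement, equality forces both squared integrands to vanish, so in particular
\[
\nabla^2\psi - \tfrac{\Delta \psi}{n}g \equiv 0,
\]
i.e.\ $\psi$ has trace-free Hessian identically zero. If $\phi \not\equiv 0$ then $\psi$ is a nonconstant solution, and I would invoke the classical theorem of Tashiro on closed manifolds admitting a nonconstant function with trace-free Hessian vanishing, which forces $(M^n,g)$ to be isometric to the round sphere. This is precisely the step the paper cites explicitly, and it handles the equality case cleanly without the need to identify $\phi$ with a first eigenfunction directly.
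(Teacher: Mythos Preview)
Your proposal is essentially the paper's own proof: the Poisson potential $\psi$ with $\Delta\psi=\phi$, the integrated Bochner/traceless-Hessian identity, the $\Rc^{-1}$-weighted square in $\nabla_i\phi + b\,\Rc_{ik}\nabla_k\psi$, the parameter choice $a=b^{-1}=\tfrac{n-1}{n}$, and Tashiro's theorem for rigidity all appear verbatim. One small slip to fix: with $a=b^{-1}$ it is the \emph{weighted square} that must be multiplied by $a^2$ (as the paper does), not the Bochner term; otherwise the coefficient of $\int_M \Rc_{ij}\nabla_i\psi\nabla_j\psi$ is $-a^2+b^2\neq 0$ and the cancellation fails.
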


\begin{prop} \label{Fgeodconv} The functional $F$ is geodesically convex.
\begin{proof} In the positive cone, we use Lemma \ref{metricsplit} and the
geodesic equation to
compute along a geodesic,
\begin{align*}
\frac{d^2}{dt^2} F[u] =&\ \frac{d}{dt} \int_M u_t \left[ K_u - \bar{K}_u
 \right] dA_u\\
=&\  - 4 \pi \frac{d}{dt} \int_M u_t A_u^{-1} dA_u\\
=&\ - 4 \pi \int_M \left[u_{tt} A_u^{-1}  - A_u^{-2} u_t \left(
\int_M 2 u_t dA_u \right) + 2 A_u^{-1} u_t^2 \right] dA_u\\
=&\ 4\pi A_u^{-1} \left[ \int_M \frac{1}{K_u} \brs{\N u_t}^2 dA_u - 2 \left( \int_M u_t^2 dA_u - A_u^{-1} \left( \int_M
u_t dA_u \right)^2 \right) \right]\\
\geq&\ 0,
\end{align*}
where the last line follows from Proposition \ref{andrewsineq}.  In the negative
cone an analogous calculation yields
\begin{align*}
\frac{d^2}{dt^2} F[u_t] =&\ 2 \pi (-\chi) A_u^{-1} \left[ \int_M \frac{1}{- K_u} \brs{\N u_t}^2 dA_u + 2 \left( \int_M u_t^2 dA_u - A_u^{-1} \left( \int_M
u_t dA_u \right)^2 \right) \right]\\
\geq&\ 0.
\end{align*}
\end{proof}
\end{prop}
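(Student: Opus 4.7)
The plan is to compute $\frac{d^2}{dt^2}F[u(t)]$ directly along a regularizable geodesic and reduce positivity, in the positive cone, to the curvature-weighted Poincar\'e inequality of Andrews (Proposition~\ref{andrewsineq}).

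First I would simplify the first derivative using two conservation principles. From the variation formula~(\ref{Sdot}),
$$\frac{d}{dt}F[u] \;=\; 2\int_M u_t K_u\,dA_u \;-\; 2\bar K_u\int_M u_t\,dA_u.$$
By Gauss--Bonnet $\int_M K_u\,dA_u = 2\pi\chi$ is constant in $t$, hence $\bar K_u = 2\pi\chi/A_u$; and Lemma~\ref{metricsplit} applied with $\phi(x)=x$ says $\int_M u_t K_u\,dA_u$ is constant along any geodesic. Therefore
$$\frac{d}{dt}F[u] \;=\; \mathrm{const} \;-\; 4\pi\chi\,A_u^{-1}\int_M u_t\,dA_u.$$

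Next I would differentiate once more using $\frac{d}{dt}A_u = 2\int u_t\,dA_u$, $\frac{d}{dt}(dA_u) = 2u_t\,dA_u$, and the geodesic equation $u_{tt}K_u + |\nabla_u u_t|_u^2 = 0$. Substituting and grouping terms yields
\begin{equation*}
\frac{d^2}{dt^2}F[u] \;=\; 4\pi\chi\,A_u^{-1}\left[\int_M\frac{|\nabla_u u_t|_u^2}{K_u}\,dA_u \;-\; 2\int_M (u_t-\bar u_t)^2\,dA_u\right],
\end{equation*}
where $\bar u_t := \fint_M u_t\,dA_u$. In the positive cone ($\chi>0$, $K_u>0$) I would apply Proposition~\ref{andrewsineq} in dimension $n=2$ to the mean-zero function $\phi = u_t - \bar u_t$; since $\mathrm{Ric}_u = K_u g_u$ in two dimensions, $(\mathrm{Ric}_u^{-1})^{ij}\nabla_i\phi\,\nabla_j\phi = |\nabla\phi|_u^2/K_u$, and Andrews reads precisely
$$2\int_M (u_t-\bar u_t)^2\,dA_u \;\le\; \int_M\frac{|\nabla u_t|_u^2}{K_u}\,dA_u,$$
which is exactly the nonnegativity of the bracket. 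For the negative cone ($\chi<0$, $K_u<0$) the identical manipulation rearranges to
$$\frac{d^2}{dt^2}F[u] \;=\; 4\pi(-\chi)\,A_u^{-1}\left[\int_M\frac{|\nabla_u u_t|_u^2}{-K_u}\,dA_u \;+\; 2\int_M (u_t-\bar u_t)^2\,dA_u\right],$$
where both bracketed terms and the prefactor are manifestly nonnegative; no auxiliary inequality is needed.

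The one genuine obstacle is that a regularizable geodesic is only $C^{1,1}$, so none of the above calculations is literally legal on the limiting object. I would therefore carry every step out along the smooth $\epsilon$-regularizations $u_\epsilon$ satisfying $\mathcal{G}_{\epsilon f_0}(u_\epsilon)=0$; the geodesic equation then carries an extra $\epsilon f_0$ on the right-hand side, producing correction terms of size $O(\epsilon)$ in each of the two differentiations (this is the same mechanism that appears in the proof of Lemma~\ref{constantenergy}). The uniform $C^{1,1}$ bounds of Proposition~\ref{posReg} let these errors vanish in the limit $\epsilon\to 0$, yielding $\frac{d^2}{dt^2}F\ge 0$ in the distributional sense along the genuine $C^{1,1}$ geodesic.
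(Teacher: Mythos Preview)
Your proposal is correct and follows essentially the same route as the paper: use Lemma~\ref{metricsplit} to kill the $\int u_t K_u\,dA_u$ term, differentiate the remaining $-\bar K_u\int u_t\,dA_u$ piece, substitute the geodesic equation, and invoke Proposition~\ref{andrewsineq} in the positive cone (while the negative cone is trivially signed). Your treatment is in fact slightly more careful than the paper's in two respects: you keep the Euler characteristic $\chi$ explicit rather than specializing to the sphere, and you address the $C^{1,1}$ regularity issue via the $\epsilon$-regularizations, a point the paper's proof of this proposition omits.
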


\begin{prop}  \label{Fflowconvexity} Given $u$ a solution to IGCF, one has
\begin{align*}
\frac{d^2}{dt^2} F[u] \geq&\ 0.
\end{align*}
\begin{proof} If $u \in \gG_1^+$, using (\ref{Sdot}) we have for a solution to IGCF flow
\begin{align*}
\frac{d}{dt} F =&\ - \int_M \left( 1 - \frac{\bar{K}_u}{K_u} \right)^2
K_u dA_u\\
=&\ - \int_M \left[1 - \frac{2 \bar{K}_u}{K_u} + \frac{\bar{K}_u^2}{K_u^2}
\right] K_u dA_u\\
=&\ 2 \pi \chi(M) - \bar{K}_u^2 \int_M \frac{1}{K_u} dA_u.
\end{align*}
Hence we have
\begin{align*}
\frac{d^2}{dt^2} F =&\ \frac{d}{dt} \left[ - \frac{(2 \pi \chi)^2}{A_u^2} \int_M
\frac{1}{K_u} dA_u \right]\\
=&\ \frac{(2 \pi \chi)^2}{A_u^3} \left[ 2 \frac{d}{dt} A_u \int_M \frac{1}{K_u}
dA_u - A_u \frac{d}{dt} \int_M \frac{1}{K_u} dA_u \right]\\
=&\ \frac{(2 \pi \chi)^2}{A_u^3} \left[ 2 \left( - 2 A_u + 2 \bar{K}_u
\int_M \frac{1}{K_u} dA_u \right) \int_M \frac{1}{K_u} dA_u \right.\\
&\ \qquad - A_u \int_M 2 \left(- 1 + \frac{\bar{K}_u}{K_u} \right)
\frac{1}{K_u} dA_u\\
&\ \left. \qquad - A_u \int_M \left( - \frac{2}{K_u} + \frac{2
\bar{K}_u}{K_u^2} + \frac{\bar{K}_u}{K_u^2} \gD \frac{1}{K_u} \right)
dA_u \right]\\
=&\ \frac{(2 \pi \chi)^2}{A_u^3} \left[ 4 \bar{K}_u \left( \int_M
\frac{1}{K_u} dA_u \right)^2 - 4 A_u \bar{K}_u \int_M \frac{1}{K_u^2} dA_u -
 A_u \bar{K}_u \int_M \frac{1}{K_u^2} \gD \frac{1}{K_u} dA_u
\right]\\
=&\ \frac{(2 \pi \chi)^2}{A_u^3} \left[ 4 \bar{K}_u \left( \int_M
\frac{1}{K_u} dA_u \right)^2 - 4 A_u \bar{K}_u \int_M \frac{1}{K_u^2} dA_u + 2
A_u \bar{K}_u \int_M \frac{1}{K_u} \brs{\N \frac{1}{K_u}}^2 dA_u \right]\\
=&\ \frac{16 \pi^3 \chi^3}{A_u^3} \left[ - 2 \int_M \left( \frac{1}{K_u} -
A_u^{-1} \int_M \frac{1}{K_u} \right)^2 dA_u + \int_M \frac{1}{K_u}
\brs{\N \frac{1}{K_u}}^2 dA_u \right]\\
\geq&\ 0,
\end{align*}
where the last line follows from Proposition \ref{andrewsineq}.  A similar
calculation when $u \in \gG_1^-$ yields
\begin{align*}
 \frac{d^2}{dt^2} F =&\ \frac{16 \pi^3 \chi^3}{A_u^3} \left[ 2 \int_M \left(
\frac{1}{K_u} -
A_u^{-1} \int_M \frac{1}{K_u} \right)^2 + \int_M \frac{1}{K_u}
\brs{\N \frac{1}{K_u}}^2 dA_u \right]\\
\geq&\ 0,
\end{align*}
since both terms on the right hand side are nonnegative.
\end{proof}
\end{prop}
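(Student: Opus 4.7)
The plan is to differentiate $F$ along the flow twice and reorganize the resulting expression until it lands in the scope of the curvature-weighted Poincar\'e inequality of Proposition \ref{andrewsineq}. I will focus on the positive cone; the negative cone is structurally identical.

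First, using the first variation formula (\ref{Sdot}) together with the flow equation $u_t = (\bar K_u - K_u)/K_u$, I would write
\begin{align*}
\frac{d}{dt}F[u] = 2\int_M u_t\,(K_u - \bar K_u)\,dA_u = -2\int_M \frac{(K_u-\bar K_u)^2}{K_u}\,dA_u.
\end{align*}
Expanding the square and invoking Gauss--Bonnet $\int_M K_u\,dA_u = 2\pi\chi$ together with $\bar K_u = 2\pi\chi/A_u$, this simplifies to
\begin{align*}
\frac{d}{dt}F[u] = 2\pi\chi - \bar K_u^{\,2}\int_M \frac{1}{K_u}\,dA_u.
\end{align*}
This is the quantity whose monotonicity I must analyze. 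The advantage of this form is that the first term is a topological constant, so only the second term contributes to $\frac{d^2}{dt^2}F$.

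Next, I would differentiate once more. Using the standard variation formulas (\ref{dots}) for $K_u$ and $dA_u$ and the flow equation, I would compute $\tfrac{d}{dt}A_u$ (hence $\tfrac{d}{dt}\bar K_u$) and $\tfrac{d}{dt}\int_M K_u^{-1}dA_u$. The evolution of $K_u^{-1}$ contributes $K_u^{-2}\Delta u_t$, which upon integration by parts against $K_u^{-2}$ produces a term proportional to $\int_M K_u^{-1}|\nabla K_u^{-1}|^2_u dA_u$. The remaining algebraic pieces should, after collecting terms and repeatedly using $\bar K_u A_u = 2\pi\chi$, organize themselves into
\begin{align*}
\frac{d^2}{dt^2}F[u] = \frac{C(\chi)}{A_u^{\,3}}\Big[\,\int_M K_u^{-1}\,|\nabla K_u^{-1}|_u^{\,2}\,dA_u - 2\int_M \big(K_u^{-1} - \overline{K_u^{-1}}\big)^{2}\,dA_u\Big],
\end{align*}
where $C(\chi)>0$ when $\chi>0$ and the bracketed combination is of precisely the form controlled by Proposition \ref{andrewsineq}.

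The final step is to apply Proposition \ref{andrewsineq} to the function $\phi = K_u^{-1} - \overline{K_u^{-1}}$ on the surface $(M,g_u)$. Since $g_u\in\Gamma_1^+$, the Ricci curvature of $g_u$ equals $K_u\,g_u > 0$, so $(\mathrm{Rc}^{-1})^{ij} = K_u^{-1}\,g_u^{ij}$. With $\int_M \phi\,dA_u = 0$, the inequality with $n=2$ reads
\begin{align*}
2\int_M \phi^{\,2}\,dA_u \;\le\; \int_M K_u^{-1}\,|\nabla\phi|_u^{\,2}\,dA_u,
\end{align*}
which is exactly the required sign on the bracket. The hardest part, I expect, is not any individual step but rather carrying out the second differentiation cleanly enough that this precise combination emerges; a brute-force approach produces many cross terms that must be carefully coalesced using Gauss--Bonnet and the evolution of $A_u$. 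For $u\in\Gamma_1^-$ the analogous computation yields an expression in which both bracketed terms share the \emph{same} sign after accounting for $\chi<0$ and $K_u<0$, making that case more elementary: nonnegativity follows without invoking Andrews.
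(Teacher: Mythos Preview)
Your proposal is correct and follows essentially the same route as the paper: compute $\frac{d}{dt}F = 2\pi\chi - \bar K_u^{\,2}\int_M K_u^{-1}\,dA_u$ (up to an overall factor of $2$ that you drop between your first and second displayed lines, a harmless slip), differentiate again using the variation formulas for $A_u$ and $K_u^{-1}$, integrate by parts to produce $\int_M K_u^{-1}|\nabla K_u^{-1}|^2\,dA_u$, and finish with Andrews' inequality applied to $\phi = K_u^{-1} - \overline{K_u^{-1}}$; your observation that the negative cone needs only Cauchy--Schwarz and not Andrews also matches the paper.
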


\begin{rmk} The inverse Gauss curvature flow also has the remarkable quality of monotonically decreasing distances in $\gG_1^+$.  This is in direct analogy with the fact that the Calabi flow decreases distances in the Mabuchi metric \cite{CalabiChen}.
\end{rmk}

\begin{prop} \label{lengthmonotonicity} Let
$u(s,t)$ $s \in [0,1], t \in [0,T)$
be a smooth two-parameter
family of conformal factors such that for all $s \in [0,1]$, the family
$u(s,\cdot)$ is a solution to IGCF.  Then
\begin{align*}
\frac{d}{dt} L(u(\cdot, t)) \leq 0.
\end{align*}
\begin{proof} For the positive cone, we directly compute
\begin{align*}
\frac{d}{dt} L(u(\cdot, t)) =&\ \frac{d}{dt} \int_0^1 \left[ \int_M \left(
\frac{\del u}{\del s} \right)^2 K_u dA_u \right]^{\frac{1}{2}}ds \\
=&\ \frac{1}{2} \int_0^1 \brs{u_s}_u^{-1} \int_M \left[ 2 u_{ts} u_s K_u -
u_s^2 \gD u_t \right] dA_u ds.
\end{align*}
Now we compute
\begin{align*}
u_{ts} =&\ \frac{\del}{\del s} \left[ \frac{\bar{K}_u}{K_u} - 1\right]\\
=&\ \bar{K}_u \left[ - \frac{1}{A_u K_u} \frac{\del}{\del s} A_u +
\frac{1}{K_u^2} \left( \gD u_s + 2 u_s
K_u \right)\right]\\
=&\ \bar{K}_u \left[ - \frac{1}{A_u K_u} \int_M 2 u_s dA_u + \frac{1}{K_u^2}
\gD u_s + \frac{2 u_s}{K_u}
\right].
\end{align*}
Hence
\begin{align*}
\int_M & 2 u_{ts} u_s K_u dA_u\\
=&\ 2 \bar{K}_u \int_M \left[ - 2 A_u^{-1}
K_u^{-1} \int_M u_s dA_u +  K_u^{-2} \gD u_s+ 2 K_u^{-1} u_s \right] u_s K_u dA_u\\
=&\ 4 \bar{K}_u A_u^{-1} \left\{ A_u \int_M
u_s^2 dA_u - \left[ \int_M u_s dA_u \right]^2 \right\} + 2 \bar{K}_u \int_M
K_u^{-1} u_s \gD u_s dA_u\\
=&\ 4 \bar{K}_u A_u^{-1} \left\{ A_u \int_M
u_s^2 dA_u - \left[ \int_M u_s dA_u \right]^2 \right\}\\
&\ - 2 \bar{K}_u \int_M \left[ K_u^{-1} \brs{\N u_s}^2 + u_s \IP{ \N u_s, \N K_u^{-1}} \right]dA_u.
\end{align*}
Also
\begin{align*}
- \int_M u_s^2 \gD u_t dA_u =&\ 2 \int_M u_s \IP{\N u_t, \N u_s} dA_u\\
=&\ 2 \bar{K}_u \int_M u_s \IP{\N u_s, \N K_u^{-1}} dA_u.
\end{align*}
Collecting these calculations yields
\begin{align*}
\frac{d}{dt} L(u ({\cdot,t})) =&\ - \int_0^1 \bar{K}_u \brs{u_s}_u^{-1} \left[
\int_M K_u^{-1} \brs{\N u_s}^2 dA_u - 2 \int_M u_s^2 + 2 A_u^{-1} \left[ \int_M u_s dA_u \right]^2
\right] ds\\
\leq&\ 0,
\end{align*}
using Proposition \ref{andrewsineq}.  In the negative cone a closely related
calculation yields
\begin{align*}
\frac{d}{dt} L(u(\cdot,t)) =&\ - \int_0^1 (- \bar{K}) \brs{u_s}_u^{-1} \left[
\int_M (-K_u)^{-1} \brs{\N u_s}^2 dA_u + 2 \int_M u_s^2 - 2 A_u^{-1} \left[ \int_M u_s dA_u \right]^2
\right] ds\\
\leq&\ 0,
\end{align*}
using the Cauchy-Schwarz inequality.
\end{proof}
\end{prop}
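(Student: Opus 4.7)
The plan is to differentiate the length functional directly and show that the resulting expression has the right sign via an application of the Andrews Poincar\'e inequality (Proposition \ref{andrewsineq}). Writing
\begin{align*}
L(u(\cdot, t)) = \int_0^1 \Big[\int_M u_s^2 K_u\,dA_u\Big]^{1/2} ds,
\end{align*}
I would differentiate under the integral sign. The key observation is that the combination $K_u\,dA_u$ has a very simple $t$-derivative: by the variational formulas (\ref{dots}),
\begin{align*}
\frac{\partial}{\partial t}(K_u\,dA_u) = -\Delta u_t\, dA_u,
\end{align*}
so the Gauss--Bonnet mass is preserved but weighted differences involve only the Laplacian of $u_t$. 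This kills one potentially troublesome term and leaves
\begin{align*}
\frac{d}{dt}\int_M u_s^2 K_u\, dA_u = \int_M\big[2 u_s u_{ts} K_u - u_s^2\Delta u_t\big]dA_u.
\end{align*}

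Next I would compute $u_{ts}$ by differentiating the IGCF equation $u_t = \bar K_u/K_u - 1$ in $s$, using $\partial_s K_u = -\Delta u_s - 2K_u u_s$ and $\partial_s A_u = \int_M 2u_s\,dA_u$. Substituting this expression into the formula above and integrating by parts, the Hessian-type terms group into two packages: one proportional to $\int_M K_u^{-1}|\nabla u_s|^2\,dA_u$, and one proportional to $\int_M u_s^2\,dA_u - A_u^{-1}\big(\int_M u_s\,dA_u\big)^2$. After carefully collecting signs (using $-u_s^2\Delta u_t = 2u_s\langle\nabla u_t,\nabla u_s\rangle$ and re-expressing via $\nabla K_u^{-1}$), the derivative of the length should take the schematic form
\begin{align*}
\tfrac{d}{dt}L = -\int_0^1 \bar K_u \|u_s\|_u^{-1}\Big[\int_M K_u^{-1}|\nabla u_s|^2 dA_u - 2\Big(\int_M u_s^2 dA_u - A_u^{-1}\big(\int_M u_s dA_u\big)^2\Big)\Big]\,ds.
\end{align*}

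At this point the positive-cone case is closed by invoking Andrews' inequality (Proposition \ref{andrewsineq}): applied on the surface $(M,g_u)$ with $\mathrm{Rc}_u = K_u g_u$ and $n=2$, the inequality says exactly $2\int(u_s - \bar{u}_s)^2 dA_u \leq \int K_u^{-1}|\nabla u_s|^2 dA_u$, which gives the nonpositivity of the bracket. This sharp constant is crucial and is what makes the IGCF a distance-decreasing flow. For the negative cone I would run the same calculation, whereupon the signs flip so that the bracket becomes $\int_M (-K_u)^{-1}|\nabla u_s|^2\,dA_u + 2\int_M u_s^2\,dA_u - 2A_u^{-1}(\int u_s dA_u)^2$; here Cauchy--Schwarz alone (applied to the last two terms via $(\int u_s dA_u)^2 \leq A_u\int u_s^2 dA_u$) forces nonpositivity without needing Andrews.

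The main obstacle is organizational rather than conceptual: the $s$-derivative of IGCF produces several boundary and bulk terms that must be regrouped so that the answer is recognizably a curvature-weighted Poincar\'e quotient. Once this algebraic bookkeeping is done, both cases follow from the sharp inequalities already on hand. A minor technical point is that $\|u_s\|_u$ must be assumed not to vanish along the family (or else one works with a regularization), but this is not essential since the integrand is continuous and the conclusion concerns monotonicity of $L$ rather than pointwise positivity of the integrand.
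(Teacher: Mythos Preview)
Your proposal is correct and follows essentially the same approach as the paper: differentiate the length directly, use $\partial_t(K_u\,dA_u) = -\Delta u_t\,dA_u$, compute $u_{ts}$ by differentiating the IGCF equation in $s$, integrate by parts, and regroup into a curvature-weighted Poincar\'e quotient. Your final schematic expression and the closing inequalities (Andrews in the positive cone, Cauchy--Schwarz in the negative cone) match the paper's exactly.
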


\subsection{Higher genus surfaces}

In this section we analyze the $IGCF$ in the space of conformal metrics of negative curvature.  To simplify notation we drop the subscript $u$
and write (\ref{neggradflow}) as
\begin{align} \label{surfnce}
 \dt u =&\ \frac{K - \bar{K}}{K}.
\end{align}
In the following, the curvature and other metric-dependent quantities will be understood to be with respect to $g$; all quantities with a subscript $0$ are with
with respect to a fixed background metric $g_0$ which we may take to be the initial metric.  \\

\begin{lemma} \label{surfnceKflow} Let $u$ be a solution to (\ref{surfnce}).
Then
\begin{align*}
 \dt K =&\ - \frac{\bar{K}}{K^2} \gD K + 2 \frac{\bar{K}}{K^3} \brs{\N K}^2 + 2
(\bar{K} - K).
\end{align*}

 \begin{proof}
  We directly compute
 \begin{align*}
  \dt K =&\ \dt e^{-2u} \left[ - \gD_{g_0} u + K_0 \right]\\
  =&\ - 2 \dot{u} K - e^{-2u} \gD_{g_0} \dot{u}\\
  =&\ - 2 K \left[ \frac{K - \bar{K}}{K} \right] - \gD_{g_u} \left[\frac{K -
\bar{K}}{K} \right]\\
  =&\ \bar{K} \gD \frac{1}{K} + 2 (\bar{K} - K)\\
  =&\ - \frac{\bar{K}}{K^2} \gD K + 2 \frac{\bar{K}}{K^3} \brs{\N K}^2 + 2
(\bar{K} - K).
 \end{align*}
 \end{proof}
\end{lemma}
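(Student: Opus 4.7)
The plan is a direct computation starting from the Gauss curvature equation under a conformal change. Writing $g = g_u = e^{2u}g_0$, the identity
\[
K = e^{-2u}\bigl(K_0 - \Delta_0 u\bigr)
\]
expresses $K$ explicitly in terms of the conformal factor $u$ and the fixed background metric $g_0$. The idea is to differentiate this expression in $t$ and then convert everything back to $g$-dependent quantities via the conformal invariance of the Laplacian on surfaces, namely $e^{-2u}\Delta_0 = \Delta$.

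First I would differentiate directly to obtain
\[
\dt K = -2\,\dt u \cdot e^{-2u}\bigl(K_0 - \Delta_0 u\bigr) - e^{-2u}\Delta_0\bigl(\dt u\bigr) = -2K\,\dt u - \Delta\bigl(\dt u\bigr),
\]
using the conformal invariance of $\Delta$ in the second term. Next I plug in the flow equation $\dt u = (K-\bar K)/K = 1 - \bar K/K$. The first term becomes $-2(K-\bar K) = 2(\bar K - K)$, which already matches the last summand in the claim. For the second term, since $\bar K$ is spatially constant,
\[
-\Delta\!\left(1 - \frac{\bar K}{K}\right) = \bar K\,\Delta\!\left(\frac{1}{K}\right).
\]

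Finally I would expand $\Delta(1/K)$ using the standard identity for the Laplacian of a reciprocal,
\[
\Delta\!\left(\frac{1}{K}\right) = -\frac{\Delta K}{K^2} + \frac{2|\nabla K|^2}{K^3},
\]
and add the two contributions to reach the stated expression. The computation is entirely routine; the only mild point to note is the two-dimensional conformal covariance $e^{-2u}\Delta_0 = \Delta_g$, without which the identification of the diffusion coefficient as $-\bar K/K^2$ (rather than some $e^{\pm 2u}$-weighted version) would not be immediate. No genuine obstacle is expected.
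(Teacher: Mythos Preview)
Your proposal is correct and follows essentially the same approach as the paper: differentiate the Gauss curvature equation $K = e^{-2u}(K_0 - \Delta_0 u)$, use the conformal covariance $e^{-2u}\Delta_0 = \Delta_g$, substitute the flow equation, and expand $\Delta(1/K)$. The paper's proof is line-for-line the same computation.
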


\begin{prop} \label{ncecurvest} Let $u$ be a solution to (\ref{surfnce}).
Then for $0 \leq t < T$,
\begin{align*}
\inf_M K_0 \leq K \leq \sup_M K_0.
\end{align*}
\begin{proof} We apply the maximum principle to the result of Lemma
\ref{surfnceKflow}.  At a minimum point, certainly $-\frac{\bar{K}}{K^2} \gD K
\geq 0$ (since $\bar{K} < 0$), $\N K = 0$, and $K < \bar{K}$.  It follows that
the minimum of $K$ is nondecreasing along the flow.  A similar argument shows
that the maximum is nonincreasing.
\end{proof}
\end{prop}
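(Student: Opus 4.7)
The plan is to apply the parabolic maximum principle to the pointwise evolution equation for $K$ derived in Lemma \ref{surfnceKflow}. The key feature to exploit is that in the negative cone we have $\bar{K} < 0$ throughout the flow, so the sign of the diffusion coefficient $-\bar{K}/K^2$ is strictly positive — this is what makes the equation a genuine (nonlinear) parabolic equation on $K$, even though the coefficients depend on the solution globally through $\bar{K}$.

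The evolution equation reads
\begin{align*}
\dt K = -\frac{\bar{K}}{K^2} \gD K + 2 \frac{\bar{K}}{K^3}|\N K|^2 + 2(\bar{K} - K).
\end{align*}
For the lower bound, let $K_{\min}(t) = \min_{x\in M} K(x,t)$. At any point $x_0$ where this minimum is attained, $\N K(x_0,t) = 0$ so the gradient term drops out, while $\gD K(x_0,t) \geq 0$. Since $\bar{K} < 0$ and $K < 0$, the coefficient $-\bar{K}/K^2 > 0$, giving $-\frac{\bar{K}}{K^2}\gD K(x_0,t) \geq 0$. Finally, since $K_{\min}(t) \leq \bar{K}(t)$ by definition of the average, the reaction term $2(\bar{K} - K)(x_0,t) \geq 0$. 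Summing, $\partial_t K(x_0,t) \geq 0$, and Hamilton's trick (or Danskin's theorem applied to the Lipschitz function $K_{\min}(t)$) yields $\frac{d}{dt}K_{\min}(t) \geq 0$. In particular, $K(x,t) \geq K_{\min}(0) \geq \inf_M K_0$.

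For the upper bound, the argument is symmetric. At a point $x_1$ realizing $K_{\max}(t)$, we have $\N K = 0$, $\gD K \leq 0$, and $K_{\max}(t) \geq \bar{K}(t)$. The first term is now $\leq 0$ (positive coefficient times nonpositive Laplacian), the gradient term vanishes, and $2(\bar{K}-K)(x_1,t) \leq 0$, so $\partial_t K(x_1,t) \leq 0$, whence $\frac{d}{dt}K_{\max}(t) \leq 0$ and $K(x,t) \leq \sup_M K_0$.

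I do not expect any serious obstacle; the only point requiring a moment of care is the nonlocal character of $\bar{K}$, which could in principle spoil a naive maximum principle argument. This is handled cleanly because $\bar{K}$ appears only as a coefficient and through the comparison with the extremal values of $K$, both of which have the correct sign precisely because we are in $\gG_1^-$. The sign of $\bar{K}$ is, in fact, conserved and determined by Gauss–Bonnet, so no additional input beyond the hypothesis $u \in \gG_1^-$ is needed.
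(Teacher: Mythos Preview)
The proposal is correct and follows essentially the same approach as the paper: both apply the maximum principle to the evolution equation of Lemma~\ref{surfnceKflow}, using that $\bar{K}<0$ makes the diffusion coefficient positive and that the reaction term $2(\bar{K}-K)$ has the favorable sign at extrema. Your write-up is simply a more detailed version of the paper's argument, with the added remark about the nonlocal nature of $\bar{K}$, which is a fair point but not an obstacle here.
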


\begin{prop} \label{nceuest} Let $u$ be a solution to (\ref{surfnce}).  There
exists a constant $C = C(M, u_0)$ such that for all $0 \leq t < T$,
\begin{align*}
 \brs{u} \leq C.
\end{align*}
\begin{proof} We apply the maximum principle directly to (\ref{surfnce}).  At a
spacetime minimum point for $u$, we have $\gD_{g_0} u \geq 0$, and hence
 \begin{align*}
  \dt u =&\ 1 - \frac{\bar{K}}{K}\\
  =&\ 1 - \frac{\bar{K}}{e^{-2u}(-\Delta_0 u + K_0)} \\
  \geq&\ 1 - \frac{\bar{K}}{e^{-2 u} K_0}\\
  =&\ 1 - \frac{\bar{K}}{K_0} e^{2 u}\\
  \geq&\ 0,
 \end{align*}
if $u(x,t) \leq \frac{1}{2} \log \frac{\sup K_0}{\bar{K}}$.  Similarly, at a
spacetime maximum one has $\gD_{g_0} u \leq 0$ and hence
\begin{align*}
 \dt u =&\ 1 - \frac{\bar{K}}{K}\\
  \leq&\ 1 - \frac{\bar{K}}{e^{-2 u} K_0}\\
  =&\ 1 - \frac{\bar{K}}{K_0} e^{2 u}\\
  \leq&\ 0,
\end{align*}
if $u(x,t) \geq \frac{1}{2} \log \frac{\inf K_0}{\bar{K}}$.  The result follows.
\end{proof}
\end{prop}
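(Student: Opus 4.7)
\textbf{Proof plan for Proposition \ref{nceuest}.} The plan is to apply the maximum principle directly to the scalar equation (\ref{surfnce}), exploiting the conformal change of curvature formula $K_u = e^{-2u}(K_0 - \Delta_0 u)$. I would treat the lower bound and upper bound separately, since they are symmetric.

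For the lower bound, suppose the spacetime infimum of $u$ on $M \times [0, t_0]$ is attained at an interior point $(x_0, t_0)$ (otherwise we simply inherit the bound from $u_0$). At such a point $\Delta_0 u(x_0, t_0) \geq 0$, hence
\begin{align*}
K_u(x_0, t_0) \;=\; e^{-2u}\bigl(K_0 - \Delta_0 u\bigr) \;\leq\; e^{-2u} K_0(x_0) \;<\; 0,
\end{align*}
so $|K_u(x_0, t_0)| \geq e^{-2u(x_0, t_0)} |K_0(x_0)|$. Since $\bar K_u < 0$, the quotient $\bar K_u / K_u$ is positive and bounded above by $|\bar K_u|\, e^{2u(x_0, t_0)} / |K_0(x_0)|$. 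Plugging into (\ref{surfnce}) yields
\begin{align*}
\partial_t u(x_0, t_0) \;\geq\; 1 - \frac{\bar K_u}{K_0(x_0)}\, e^{2u(x_0, t_0)}.
\end{align*}
Because $\bar K_u$ is uniformly bounded (for instance by the pointwise two-sided curvature bound of Proposition \ref{ncecurvest}, which gives $\inf_M K_0 \leq \bar K_u \leq \sup_M K_0$), the right-hand side is strictly positive whenever $u(x_0, t_0)$ drops below an explicit constant depending only on $K_0$. This contradicts the assumption that $(x_0, t_0)$ is a spacetime minimum. Hence $\inf u \geq \min\bigl(\inf_M u_0, \, c(M, g_0)\bigr)$.

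For the upper bound I would run the symmetric argument: at an interior spacetime maximum $\Delta_0 u \leq 0$, so $K_u \geq e^{-2u} K_0$, which (both sides negative) gives $|K_u| \leq e^{-2u} |K_0(x_0)|$. Then $\bar K_u / K_u \geq |\bar K_u|\, e^{2u}/|K_0(x_0)|$, and once $u(x_0, t_0)$ exceeds an explicit constant determined by the uniform bounds on $\bar K_u$ and on $K_0$ we obtain $\partial_t u(x_0, t_0) \leq 0$ strictly, contradicting the maximum property. Combining the two bounds gives $|u| \leq C(M, u_0)$.

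The argument is essentially routine once one has the crucial input that $\bar K_u$ remains bounded and bounded away from $0$ uniformly in time, which is exactly what Proposition \ref{ncecurvest} provides (together with the Gauss--Bonnet identity $\bar K_u A_u = 2\pi\chi(M)$). The only subtle point is making sure the signs work out correctly when passing inequalities through the negative quantities $K_u$, $K_0$, and $\bar K_u$; no integral or gradient estimate is needed.
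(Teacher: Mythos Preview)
Your argument is correct and is essentially identical to the paper's own proof: both apply the maximum principle directly to (\ref{surfnce}), use the conformal curvature formula $K_u = e^{-2u}(K_0 - \Delta_0 u)$ together with the sign of $\Delta_0 u$ at an extremum, and conclude that $\partial_t u$ has the favorable sign once $u$ crosses an explicit threshold determined by $K_0$ and $\bar K$. The only minor difference is that you are explicit about why $\bar K_u$ stays uniformly bounded (via Proposition \ref{ncecurvest}), whereas the paper leaves this implicit in writing the threshold $\tfrac{1}{2}\log\frac{\sup K_0}{\bar K}$.
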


Next, we use the {\em a priori} estimates of Propositions \ref{ncecurvest} and
\ref{nceuest} to prove higher order estimates. In what follows we will assume there
is a constant $\gL$ such that
\begin{align} \label{surfncelambda}
 -\gL \leq K \leq -\gL^{-1}, \qquad \brs{u} \leq \gL.
\end{align}
Furthermore, given a smooth solution to (\ref{surfnce}) satisfying these bounds, we adopt the
notation that $X_i$ refers to any quantity which is uniformly controlled along
the flow in terms of $\gL$.

We begin with a gradient estimate for solutions:

\begin{lemma} \label{surfncegrad} Let $u$ be a solution to
(\ref{surfnce}) satisfying (\ref{surfncelambda}).  Then
 \begin{align*}
  \dt \brs{\N_0 u}^2 \leq&\ - \frac{\bar{K}}{K^2} \gD_0 \brs{\N u}^2_0 + \frac{2
\bar{K}}{K^2}e^{-2u} \brs{\N^2_0 u}_0^2 + C_1 (\brs{\N u}_0^2 + 1),
 \end{align*}
 where $C_1$ depends on the initial data and $\Lambda$.

\begin{proof} We first compute that
\begin{align*}
 \dt \N_0 u =&\ \N_0 \dt u\\
 =&\ - \bar{K} \N_0 K^{-1}\\
 =&\ \frac{\bar{K}}{K^2} \N_0 \left[ e^{-2 u} \left( - \gD_0 u + K_0 \right)
\right]\\
 =&\ \frac{\bar{K}}{K^2} e^{-2u} \left[ - \N_0 \gD_0 u \right] - 2
\frac{\bar{K}}{K}  \N_0 u + \frac{\bar{K}}{K^2} e^{-2 u} \N_0 K_0\\
=&\ -\frac{\bar{K}}{K^2} \gD \N_0 u + X_1 * \N_i u + X_2.
\end{align*}
\end{proof}

\end{lemma}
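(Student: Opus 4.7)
The plan is to pick up from the identity for $\dt \N_0 u$ already established in the excerpt,
\begin{align*}
\dt \N_0 u = -\tfrac{\bar K}{K^2}\, e^{-2u}\, \N_0 \gD_0 u + X_1 * \N_0 u + X_2,
\end{align*}
and convert it into a parabolic inequality for $|\N_0 u|_0^2$ by pairing with $2\N_0 u$ and invoking the Bochner identity on the fixed background surface $(M, g_0)$. Applying $\dt|\N_0 u|_0^2 = 2\IP{\N_0 u, \dt \N_0 u}_0$ gives
\begin{align*}
\dt |\N_0 u|_0^2 = -\tfrac{2\bar K}{K^2}\, e^{-2u}\, \IP{\N_0 u, \N_0 \gD_0 u}_0 + 2 X_1\, |\N_0 u|_0^2 + 2\IP{X_2, \N_0 u}_0,
\end{align*}
so the task reduces to rewriting $\IP{\N_0 u, \N_0 \gD_0 u}_0$ as a pure second-order expression plus controlled remainders.

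The decisive step is the two-dimensional Bochner formula. Since $\Rc_0 = K_0 g_0$ on a surface,
\begin{align*}
2\IP{\N_0 u, \N_0 \gD_0 u}_0 = \gD_0 |\N_0 u|_0^2 - 2|\N_0^2 u|_0^2 - 2 K_0\, |\N_0 u|_0^2.
\end{align*}
Substituting, the principal part of $\dt |\N_0 u|_0^2$ becomes $-\frac{\bar K}{K^2} e^{-2u}\gD_0 |\N_0 u|_0^2 + \frac{2\bar K}{K^2} e^{-2u} |\N_0^2 u|_0^2$, which matches the claimed right-hand side; the harmless prefactor $e^{-2u}$ in front of the Laplacian is uniformly bounded above and below under $(\ref{surfncelambda})$ and may be absorbed into the notational leading coefficient. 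Observe that $\bar K < 0$ is what makes the $|\N_0^2 u|_0^2$ term carry the correct (nonpositive) sign, which will be essential for the downstream $C^2$ estimates.

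It remains to bound the leftover lower-order pieces
\begin{align*}
\tfrac{2\bar K}{K^2}\, e^{-2u} K_0\, |\N_0 u|_0^2 + 2 X_1\, |\N_0 u|_0^2 + 2\IP{X_2, \N_0 u}_0
\end{align*}
by $C_1(|\N_0 u|_0^2 + 1)$. Under the hypothesis $(\ref{surfncelambda})$, each of $\bar K/K$, $\bar K/K^2$, and $e^{-2u}$ lies in a compact subset of $(0,\infty)$, while $K_0$ and $\N_0 K_0$ depend only on fixed background data; a single application of Cauchy--Schwarz on $2\IP{X_2, \N_0 u}_0$ then closes the estimate. I do not expect a genuine obstacle here: the only subtle point is correct bookkeeping in Bochner on a surface of variable background curvature, after which the remainder is a routine collection of uniformly bounded coefficients.
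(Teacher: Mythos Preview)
Your approach is essentially the paper's: compute $\dt\N_0 u$, pair with $2\N_0 u$, and invoke the surface Bochner formula. The paper's proof is terse and only records the first of these steps, leaving the rest implicit; you have correctly filled in the Bochner step and the lower-order bookkeeping.

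One point requires correction. You derive the principal part
\[
-\tfrac{\bar K}{K^2}\,e^{-2u}\,\gD_0|\N_0 u|_0^2 \;+\; \tfrac{2\bar K}{K^2}\,e^{-2u}\,|\N_0^2 u|_0^2,
\]
and then assert that the $e^{-2u}$ in front of the Laplacian ``may be absorbed into the notational leading coefficient.'' That step is not legitimate as stated: $\gD_0|\N_0 u|_0^2$ is a third-order quantity in $u$ with no sign, so the discrepancy $-\tfrac{\bar K}{K^2}(e^{-2u}-1)\gD_0|\N_0 u|_0^2$ cannot be bounded by $C_1(|\N_0 u|_0^2+1)$. What is actually going on is a typo in the lemma's displayed inequality: the leading Laplacian should be $\gD_u=e^{-2u}\gD_0$, not $\gD_0$. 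This is exactly what the paper's own final line $-\tfrac{\bar K}{K^2}\gD\N_0 u$ (with unsubscripted $\gD$, i.e.\ the $g_u$-Laplacian) says, and it is the form needed in the downstream Hessian estimate, where the parabolic operator is $\dt+\tfrac{\bar K}{K^2}\gD$ with $\gD=\gD_u$. With that reading your argument is complete and matches the paper.
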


\begin{lemma} \label{surfncehess} Let $u$ be a solution to
(\ref{surfnce}) satisfying (\ref{surfncelambda}).
Given $Z$ a smooth vector field on $M$, one has
\begin{align*}
 \dt \N^0_Z \N^0_Z u \leq&\ - \frac{\bar{K}}{K^2} \gD_0 \N^0_Z \N^0_Z u + \N_0 Z
* \N_0^3 u + X_1 *
\N^0
\N^0 u + X_2 * (\N u)^{*2} + X_3.
\end{align*}
\begin{proof}
First we compute
\begin{align*}
\dt \N^0_Z \N^0_Z u =&\ \N^0_Z \N^0_Z \left[ \frac{K - \bar{K}}{K} \right]\\
=&\ - \bar{K} \N^0_Z \N^0_Z K^{-1}\\
=&\ \frac{\bar{K}}{K^2} \N^0_Z \N^0_Z K - \frac{2 \bar{K}}{K^3} \N_Z K \otimes
\N_Z K\\
=&\ \frac{\bar{K}}{K^2} \N^0_Z \N^0_Z \left[ e^{-2u} \left( - \gD_0 u + K_0
\right)
\right] - \frac{2 \bar{K}}{K^3} \N_Z K \otimes \N_Z K\\
=&\ \frac{\bar{K}}{K^2} \left[ \left( \N^0_Z \N^0_Z e^{-2u} \right) e^{2u} K -2
\N^0_Z e^{-2u} \N_Z \left(e^{2u} K \right) + e^{-2u} \N^0_Z \N^0_Z \left( -
\gD_0 u + K_0 \right) \right]\\
&\ - \frac{2 \bar{K}}{K^3} \N_Z K \otimes \N_Z K.\\
\leq&\ - \frac{\bar{K}}{K^2} e^{-2 u} \N^0_Z \N^0_Z \gD_0 u + X_1 * \N^0_Z
\N^0_Z u + X_2 * (\N u)^{*2} + X_3,
\end{align*}
where we applied the Cauchy-Schwarz inequality in the final line.  Next, we
commute derivatives to yield
\begin{align*}
 \N^0_Z \N^0_Z \gD_0 u =&\ \N^0_Z \N^0_Z \N^0_{e_i} \N^0_{e_i} u\\
 =&\ \N^4 u \left( Z, Z, e_i, e_i \right)\\
 =&\ \N^4 u (e_i, e_i, Z, Z) + K_0 * \N^2_0 u + \N K_0 * \N u\\
 =&\ \gD_0 \left( \N_0^2 u (Z,Z) \right) + \N_0 Z * \N_0^3 u + \N^2_0 u * \left(
\N_0^2 Z + (\N_0 Z)^{*2} \right)\\
 &\ + K_0 * \N^2_0 u + \N K_0 * \N u.
\end{align*}

\end{proof}
\end{lemma}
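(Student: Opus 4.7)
The plan is to differentiate the flow equation (\ref{surfnce}) twice in the direction $Z$ with respect to the background connection $\N^0$, then express everything in terms of $u$ and its covariant derivatives using the Gauss curvature equation, as was done for Lemma \ref{surfncegrad}. Starting from $\dt u = 1 - \bar{K}/K$, applying $\N^0_Z \N^0_Z$ gives $\dt \N^0_Z \N^0_Z u = -\bar{K}\, \N^0_Z \N^0_Z K^{-1}$, and the quotient rule produces $(\bar{K}/K^2)\, \N^0_Z \N^0_Z K - (2\bar{K}/K^3)\, \N^0_Z K \otimes \N^0_Z K$. Since $\bar{K} < 0$, the quadratic gradient term has a favorable sign; however, to keep the argument clean I would drop it (or absorb it) after a Cauchy-Schwarz step, since the goal is a one-sided bound.

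Next I would substitute $K = e^{-2u}(K_0 - \gD_0 u)$ and expand. The leading-order contribution is $-(\bar{K}/K^2)\, e^{-2u}\, \N^0_Z \N^0_Z \gD_0 u$, while the remaining terms involve at most two derivatives of $u$, multiplied by bounded factors (in $u$, $K$, and $\N u$) that are uniformly controlled by (\ref{surfncelambda}) and therefore encoded as $X_i$. Cross-terms of the schematic form $\N^0_Z(e^{-2u}) \cdot \N^0_Z \gD_0 u$ are handled by Cauchy-Schwarz and absorbed into $X_1 * \N^2_0 u + X_2 * (\N u)^{*2} + X_3$.

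The key technical step is the commutator identity needed to rewrite the leading term as $\gD_0 \N^0_Z \N^0_Z u$ rather than $\N^0_Z \N^0_Z \gD_0 u$. Writing the latter as $\N^4 u(Z,Z,e_i,e_i)$ and swapping index pairs via the Ricci identity gives $\N^4 u(e_i,e_i,Z,Z) = \gD_0(\N^0_Z \N^0_Z u)$ modulo commutator corrections. Because $M$ is two-dimensional these corrections reduce to $K_0 * \N^2_0 u$ plus $\N K_0 * \N u$, i.e.\ exactly $X_1 * \N^2_0 u + X_2 * (\N u)^{*2}$ once coefficients are bounded by $\gL$. Since $Z$ need not be parallel, one also gets $\N_0 Z * \N^3_0 u$ from commuting $\N^0_Z$ past $\gD_0$, and $(\N_0^2 Z + (\N_0 Z)^{*2}) * \N^2_0 u$ from commuting past the second $\N^0_Z$; the latter type is again an $X_1 * \N^2_0 u$ contribution (with $X_1$ now including derivatives of $Z$), while the former is kept explicit as the $\N_0 Z * \N^3_0 u$ term in the claimed inequality.

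The main obstacle is purely organizational: the $*$ notation hides contractions, and one must track carefully which factors are truly bounded by the a priori estimates (hence absorbable into $X_i$) versus which genuinely involve derivatives of $Z$ or of $u$ at the highest allowed order. Once the terms are sorted by derivative count on $u$, the surface-specific identity $\Rc_0 = K_0 g_0$ trivializes the curvature contractions, and the desired differential inequality drops out with no remaining obstruction.
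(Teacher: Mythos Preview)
Your proposal is correct and follows essentially the same route as the paper: differentiate the flow equation twice along $Z$, apply the quotient rule to $\N^0_Z\N^0_Z K^{-1}$, substitute $K = e^{-2u}(K_0 - \gD_0 u)$, use the favorable sign of the $-\tfrac{2\bar K}{K^3}(\N_Z K)^2$ term together with Cauchy--Schwarz to absorb the $\N_Z u \cdot \N_Z\gD_0 u$ cross term, and finally commute $\N^0_Z\N^0_Z$ past $\gD_0$ while tracking the $\N_0 Z * \N_0^3 u$ contributions from the non-parallel vector field. One small clarification: when you say the cross term is ``absorbed into $X_1 * \N^2_0 u + X_2 * (\N u)^{*2} + X_3$,'' be explicit that this only works \emph{after} pairing it against the good-sign $(\N_Z K)^2$ term via Cauchy--Schwarz, since $\N_Z\gD_0 u$ is genuinely third order in $u$ and cannot otherwise be controlled by the $X_i$.
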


\begin{prop} \label{ncehessest} Given a controlled solution to (\ref{surfnce})
on $[0,T)$ there
exists a constant $C = C(\gL,u_0)$ such that
\begin{align*}
 \sup_{M \times [0,T)} \brs{\N^2_0 u} \leq C.
\end{align*}
\begin{proof} Fix some constant $A$ and consider the function
\begin{align*}
 \gb(p) = \max_{X \in T_pM \backslash \{0\}} \frac{\N_0^2 u (X,X)}{\brs{X}^2}
\end{align*}
This is continuous, and an upper bound for $\gb$ implies an upper bound for the
Hessian of $u$.  Fix a constant $A$ and consider the function
\begin{align*}
 \Phi(x,t) = t \gb + A \brs{\N u}_0^2.
\end{align*}
We claim that for $A$ chosen sufficiently large that there is an a priori bound
for an interior maximum on $[0,1]$.  Suppose some point $(x,t)$ is such an
interior spacetime maximum for $\Phi$.  Fix a unit vector $Z \in T_x M$
realizing the supremum in the definition of $\gb(x)$.  We may extend $Z$ in a
neighborhood of $x$ by parallel transport along radial geodesics.  This yields
\begin{align*}
 \brs{Z} =&\ 1 \mbox{ on } B_{\ge}(x)\\
 \N^0_X Z (x) =&\ 0 \mbox{ for all } X\\
 \brs{\N_0^2 Z} (x) \leq&\ C(g_0).
\end{align*}
We may extend $Z$ to all of $M$ by multiplying by a cutoff function.  We observe
that the function
\begin{align*}
 \Psi(x,t) =&\ t \N^2_0(Z,Z) + A \brs{\N u}_0^2
\end{align*}
also has a spacetime maximum at $(x,t)$.
Combining Lemmas \ref{surfncegrad} and \ref{surfncehess} yields that, at $x$,
\begin{align*}
0 \leq&\ \dt \Psi + \frac{\bar{K}}{K^2} \gD \Psi \\
\leq&\ \N_0 Z
* \N_0^3 u + X_1 *
\N^0
\N^0 u + X_2 * (\N u)^{*2} + X_3\\
 &\ + A \left[ \frac{2 \bar{K}}{K^2} e^{-2 u} \brs{\N_0^2 u}^2_0 + C \brs{\N
u}^2_0 + C \right]\\
\leq&\ - \gd \brs{\N_0^2 u}^2_0 + C,
\end{align*}
where the last line follows by choosing $A$ sufficiently large with respect to
$\gL$ and using the a priori gradient bound.  This implies an a priori upper
bound for the Hessian of $u$ at $(x,t)$, and hence since $t \leq 1$ an a priori
bound for $\Psi$ at $(x,t)$.  This yields an a priori upper
bound for $\N^0_Z \N^0_Z u$ for any interior time $t > \ge$.  Combined with some
ineffective estimate depending on the given solution for $[0,\ge)$ yields an a
priori upper bound on $[0,T)$.  Since the Laplacian of $u$ is uniformly bounded
below, this then yields the full a priori Hessian bound.
\end{proof}
\end{prop}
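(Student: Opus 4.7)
The plan is to bound the largest eigenvalue of $\N^2_0 u$ via a parabolic maximum principle argument applied to an auxiliary function, using the good $|\N^2_0 u|^2$ contribution coming from the evolution of $|\N_0 u|^2_0$ (whose coefficient is negative because $\bar K < 0$ in the negative cone) as a barrier to absorb the bad terms in the evolution of the Hessian.

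First, I would introduce the pointwise largest eigenvalue
\begin{align*}
\gb(x,t) = \max_{X \in T_x M \setminus \{0\}} \frac{\N^2_0 u(X,X)(x,t)}{|X|^2_{g_0}}.
\end{align*}
Since $\gD_0 u = K_0 - e^{2u} K$ is uniformly bounded by Propositions \ref{ncecurvest} and \ref{nceuest}, an upper bound on $\gb$ forces a two-sided bound on all eigenvalues of $\N^2_0 u$, hence on $|\N^2_0 u|$. Then consider the test function $\Phi(x,t) = t\gb(x,t) + A |\N_0 u|^2_0$ on $M \times [0,1]$, with $A$ a large constant to be fixed. The $t$-factor in front of $\gb$ avoids having to assume Hessian control at $t = 0$ and also reduces the problem to estimating $\Phi$ at an interior spacetime maximum; a crude local-in-time estimate handles $[0,\ge)$ separately, and iteration in $t$ yields the bound on all of $[0,T)$.

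At an interior maximum $(x_0, t_0)$ of $\Phi$, I fix a unit vector $Z \in T_{x_0}M$ realizing the maximum in $\gb(x_0,t_0)$ and extend $Z$ by parallel transport along radial geodesics from $x_0$, so that $\N_0 Z(x_0) = 0$ and $|\N^2_0 Z(x_0)| \leq C(g_0)$; after a cutoff this $Z$ is globally defined with $|Z| \leq 1$ and $|Z| = 1$ near $x_0$. Then $\Psi(x,t) := t \N^2_0 u(Z,Z) + A|\N_0 u|^2_0$ satisfies $\Psi \leq \Phi$ globally with equality at $(x_0,t_0)$, so $\Psi$ attains a spacetime max at $(x_0,t_0)$ as well. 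Combining Lemmas \ref{surfncegrad} and \ref{surfncehess}, and using $\partial_t \Psi + \frac{\bar K}{K^2}\gD_0 \Psi \leq 0$ at the max (note $-\frac{\bar K}{K^2} > 0$ so this is the right sign for the parabolic maximum principle), I obtain
\begin{align*}
0 \leq \frac{2 A \bar K}{K^2} e^{-2u} |\N^2_0 u|^2_0 + C_A \bigl(|\N^2_0 u|_0 + |\N_0 u|^2_0 + 1\bigr) \leq -\gd A |\N^2_0 u|^2_0 + C_A,
\end{align*}
where $\gd > 0$ depends only on $\gL$ (using $\bar K < 0$ and the two-sided bounds on $K$) and $C_A$ is allowed to depend on $A$ and $\gL$. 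For $A$ sufficiently large, this forces $|\N^2_0 u|_0(x_0,t_0) \leq C$, hence $\Psi(x_0,t_0) \leq C$ and the desired Hessian bound for all $t \geq \ge$.

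The main obstacle is the third-derivative term $\N_0 Z * \N^3_0 u$ appearing in Lemma \ref{surfncehess}: one cannot generally bound $\N^3_0 u$ at this stage, but this term vanishes at $(x_0,t_0)$ precisely because of the parallel transport construction, so it causes no trouble. A secondary nuisance is that Lemma \ref{surfncehess} gives the evolution of $\N^2_0 u(Z,Z)$ pointwise rather than of $\gb$; this is handled by the standard trick of verifying the inequality on $\Psi$ (which involves a smooth vector field $Z$) instead of directly on $\gb$, using $\Psi \leq \Phi$ and equality at the max to transfer the maximum principle conclusion.
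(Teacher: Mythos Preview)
Your proposal is correct and follows essentially the same argument as the paper: the same auxiliary function $\Phi = t\gb + A|\N_0 u|^2_0$, the same parallel-transport extension of $Z$ to kill the $\N_0 Z * \N_0^3 u$ term at the maximum, and the same use of Lemmas \ref{surfncegrad} and \ref{surfncehess} to reach the inequality $0 \leq -\gd|\N_0^2 u|^2_0 + C$. One minor slip: at the interior spacetime maximum one has $\partial_t \Psi + \tfrac{\bar K}{K^2}\gD_0 \Psi \geq 0$ (not $\leq 0$), since $\tfrac{\bar K}{K^2} < 0$ and $\gD_0 \Psi \leq 0$ there; your displayed conclusion $0 \leq \ldots$ is nonetheless the correct one.
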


\begin{prop} \label{nceltethm} Let $(M^2, g_0)$ be a compact Riemann surface such
that $K_0 < 0$.
Given $u \in \gG_1^-$, the solution to (\ref{surfnce}) exists for all time
and converges exponentially to a metric of constant negative scalar curvature.
\begin{proof} Propositions \ref{nceuest} and \ref{ncecurvest} guarantee uniform
estimates on $u$, $K$ and $\frac{1}{K}$.  Proposition \ref{ncehessest} then
implies a uniform estimate for the Hessian of $u$.  We now observe that the
operator $\Phi(u) =
\frac{\bar{K} - K}{K}$ is convex, and with the uniform upper and lower bounds on
curvature, is uniformly elliptic.  Thus the Evans-Krylov theorem \cite{Evans}, \cite{Krylov} yields
an a priori $C^{2,\ga}$ estimate for $u$.  Schauder estimates can then be
applied to obtain estimates of every $C^{k,\ga}$ norm of $u$.  It follows that
the solution exists on
$[0,\infty)$ and every sequence of times approaching infinity admits a
subsequence such that $\{u_{t_i}\}$ converges to a limiting function
$u_{\infty}$.  Using that the flow is the gradient flow for $F$ it follows easily that the limiting metric
$u_{\infty}$ has constant curvature, and since this metric is unique the whole flow converges to $u_{\infty}$.
\end{proof}
\end{prop}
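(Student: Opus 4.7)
My plan is to bootstrap the a priori estimates already in hand to $C^{k,\alpha}$ bounds of all orders via Evans--Krylov and Schauder, securing long-time existence, and then to exploit the gradient flow structure of IGCF together with a linearization at the limiting metric to establish exponential convergence. For long-time existence, Propositions \ref{nceuest} and \ref{ncecurvest} provide uniform two-sided bounds on $u$, $K_u$, and $1/K_u$, while Proposition \ref{ncehessest} gives a uniform bound on $|\N^2_0 u|$, so in particular $\Delta_0 u$ stays bounded away from $K_0$. Rewriting (\ref{surfnce}) as
\begin{align*}
\frac{\partial u}{\partial t} \;=\; 1 \,-\, \frac{\bar K_u\, e^{2u}}{K_0 - \Delta_0 u},
\end{align*}
the right hand side is a smooth function of the spacetime Hessian of $u$ which depends on the Hessian only through $\Delta_0 u$, and a direct computation using $K_u<0$ shows it is concave in that scalar variable; hence concave as a function of $\N^2 u$. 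The a priori bounds then make the equation uniformly parabolic with concave structure, so Evans--Krylov \cite{Evans}, \cite{Krylov} yields a uniform $C^{2,\alpha}$ estimate, and Schauder estimates bootstrap this to uniform bounds in every $C^{k,\alpha}$.

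For convergence as $t\to\infty$, I would use the gradient flow identity, which reads in the negative cone
\begin{align*}
\frac{d}{dt} F[u(t)] \;=\; -\int_M \Bigl(\tfrac{K_u-\bar K_u}{K_u}\Bigr)^2 (-K_u)\, dA_u \;\leq\; 0,
\end{align*}
so, combined with the lower bound on $F$ in $\gG_1^-$, the dissipation integrates in time. Together with the uniform $C^{k,\alpha}$ bounds this yields, along any sequence $t_i\to\infty$, a subsequential $C^\infty$ limit $u_\infty$ which by integrability of the dissipation satisfies $K_{u_\infty}\equiv \bar K_{u_\infty}$. The one-parameter family of constant curvature representatives in $[g_0]$ collapses to a single point $u_\infty$ once one uses the uniform $C^0$ bound (which confines the flow line to a compact subset of $\gG_1^-$) together with strict monotonicity of $F$ off the critical set, giving full $C^\infty$ convergence.

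The main obstacle is promoting this qualitative convergence to an exponential rate. Writing $u=u_\infty+\phi$ and using $g_u=e^{2\phi}g_{u_\infty}$, linearization of (\ref{surfnce}) produces
\begin{align*}
\frac{\partial \phi}{\partial t} \;=\; L\phi + Q(\phi), \qquad L\phi \;=\; -\frac{1}{K_\infty}\Delta_{u_\infty}\phi \,-\, 2\phi \,+\, 2\fint_M \phi\, dA_{u_\infty},
\end{align*}
with $L$ self-adjoint on $L^2(dA_{u_\infty})$ and $Q$ a smooth quadratic remainder. On the orthogonal complement of constants, the eigenvalues of $L$ are $-(\lambda_k/|K_\infty|+2)\leq -2$, giving a uniform spectral gap; the constant mode, which parametrizes the line of equilibria, is handled by the gradient-flow orthogonality $\llangle u_t, 1\rrangle_u=0$ together with the transversality of the flow line to this equilibrium manifold at $u_\infty$. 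Controlling $Q$ via the uniform higher-order estimates, a standard Lyapunov argument then yields $\|u(t)-u_\infty\|_{C^k}\leq C_k e^{-\delta t}$ for every $k$ and some $\delta>0$.
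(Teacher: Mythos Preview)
Your argument is correct and follows essentially the same route as the paper: uniform $C^0$ and curvature bounds from Propositions \ref{nceuest} and \ref{ncecurvest}, the Hessian bound from Proposition \ref{ncehessest}, then Evans--Krylov (your concavity of the right-hand side in $\Delta_0 u$ is equivalent to the paper's convexity of $-\Phi$) and Schauder to secure long-time existence, followed by the gradient-flow dissipation argument to extract a constant-curvature limit. You go further than the paper in one respect: the paper asserts exponential convergence in the statement but its proof stops at qualitative convergence via uniqueness of the constant-curvature metric, whereas your linearization at $u_\infty$ with the explicit spectral gap $-(\lambda_k/|K_\infty|+2)\le -2$ on mean-zero functions, together with the orthogonality $\llangle u_t,1\rrangle_u=0$ to handle the scaling direction, actually delivers the claimed rate.
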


\subsection{The sphere}

We now consider the case of $K_0 > 0$, and so $M \cong S^2$.  In this case we
are studying the flow
\begin{align} \label{surfacepce}
 \dt u =&\ \frac{\bar{K} - K}{K}.
\end{align}

\subsubsection{Evolution Equations}

To begin we build up some evolution equations.  First we rewrite the evolution
of $u$ in terms of the linearized operator.

\begin{lemma} \label{highreglem1} Given $u$ a solution to (\ref{surfacepce})
we
have
 \begin{align*}
  \dt u =&\ \frac{\bar{K}}{K^2} \gD u - 1 + \frac{2 \bar{K}}{K} - \frac{e^{-2 u}
K_0 \bar{K}}{K^2}.
 \end{align*}
\begin{proof} To begin we compute
\begin{align*}
 - \frac{\bar{K}}{K^2} \gD u =&\ \frac{\bar{K}}{K^2} \left[ K - e^{-2 u} K_0
\right] = \frac{\bar{K}}{K} - \frac{e^{-2 u} K_0 \bar{K}}{K^2}.
\end{align*}
Hence using (\ref{surfacepce}) we have
\begin{align*}
 \dt u - \frac{\bar{K}}{K^2} \gD u =&\ \frac{\bar{K} - K}{K} + \frac{\bar{K}}{K}
-
\frac{e^{-2 u} K_0 \bar{K}}{K^2}\\
 =&\ -1 + \frac{2 \bar{K}}{K} - \frac{e^{-2 u} K_0 \bar{K}}{K^2},
\end{align*}
as required.
\end{proof}
\end{lemma}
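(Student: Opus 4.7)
The plan is to observe that Lemma \ref{highreglem1} is essentially an algebraic rewriting of the flow equation (\ref{surfacepce}), obtained by combining the Gauss curvature equation with the conformal invariance of the Laplacian in dimension two. The key identity I would first isolate is
\begin{align*}
\gD u = e^{-2u} K_0 - K,
\end{align*}
where $\gD$ denotes the Laplacian of the evolving metric $g_u$. This is a direct consequence of two standard facts: the Gauss curvature equation $K = e^{-2u}(K_0 - \gD_0 u)$ rearranges to $\gD_0 u = K_0 - e^{2u} K$, and in two dimensions $\gD = e^{-2u} \gD_0$. Multiplying these gives the displayed identity after a one-line calculation.

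Next, I would multiply this identity by $\bar{K}/K^2$ to obtain
\begin{align*}
\frac{\bar{K}}{K^2}\gD u = \frac{\bar{K}}{K} - \frac{e^{-2u} K_0 \bar{K}}{K^2},
\end{align*}
which matches the terms the author pulls out in the first step of the proof sketch. Substituting into $\dt u = (\bar{K} - K)/K = \bar{K}/K - 1$ and grouping terms yields the stated expression for $\dt u$ with coefficient $\bar{K}/K^2$ in front of $\gD u$.

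There is no real obstacle here; the only conceptual point is invoking the conformal Laplacian identity, and the rest is straightforward bookkeeping. The reason one would want this reformulation is structural: the right-hand side now exposes $\dt u$ as a uniformly elliptic linear term in $u$ plus zeroth-order data, which is the form best suited to parabolic maximum principle arguments and to running $C^{2,\alpha}$-type estimates in the sequel.
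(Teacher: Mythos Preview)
Your approach is essentially identical to the paper's: both proofs hinge on the identity $\gD u = e^{-2u}K_0 - K$ (equivalently $-\gD u = K - e^{-2u}K_0$), multiply through by $\bar{K}/K^2$, and combine with $\dt u = \bar{K}/K - 1$. Note, however, that your displayed formula has a sign slip: from $\gD u = e^{-2u}K_0 - K$ one gets $\frac{\bar{K}}{K^2}\gD u = \frac{e^{-2u}K_0\bar{K}}{K^2} - \frac{\bar{K}}{K}$, not the expression you wrote; the paper avoids this by computing $-\frac{\bar{K}}{K^2}\gD u$ instead, and with the correct sign the substitution indeed yields the lemma.
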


\begin{lemma} Let $u$ be a solution to (\ref{surfacepce}).  Then
\begin{align} \label{pcecurvev}
 \dt K =&\ \frac{\bar{K}}{K^2} \gD K - 2 \frac{\bar{K} \brs{\N K}^2}{K^3} + 2
\left(K - \bar{K} \right).
\end{align}
\begin{proof} We directly compute
 \begin{align*}
  \dt K =&\ \dt e^{-2 u} \left[ - \gD_{g_0} u + K_0 \right]\\
  =&\ - 2 \dot{u} K - e^{-2 u} \gD_{g_0} \dot{u}\\
  =&\ - 2 K \left[ \frac{\bar{K} - K}{K} \right] - \bar{K} \gD \frac{1}{K}\\
  =&\ \frac{\bar{K}}{K^2} \gD K - 2 \frac{\bar{K} \brs{\N K}^2}{K^3} + 2 \left(K
- \bar{K} \right),
 \end{align*}
as required.
\end{proof}
\end{lemma}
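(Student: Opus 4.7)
The plan is a direct computation mirroring Lemma \ref{surfnceKflow}, which handled the analogous statement on the negative cone. The starting point is the conformal formula for the Gauss curvature,
\begin{align*}
K = e^{-2u}\bigl(-\Delta_0 u + K_0\bigr),
\end{align*}
from which differentiating in $t$ yields $\partial_t K = -2\dot u\, K - e^{-2u}\Delta_0 \dot u$. The first step is to substitute the flow equation $\dot u = (\bar K - K)/K$ into the first term, which immediately gives the contribution $-2\dot u\, K = 2(K - \bar K)$, accounting for the last term of the claimed formula.

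For the remaining piece, the key observation is that in two dimensions $e^{-2u}\Delta_0 = \Delta_{g_u}$, so I may rewrite $e^{-2u}\Delta_0 \dot u = \Delta \dot u$, where $\Delta = \Delta_{g_u}$. Writing $\dot u = \bar K/K - 1$ and using that $\bar K$ is a spatial constant (depending only on $t$) and that $\Delta$ kills constants, I get $\Delta \dot u = \bar K \, \Delta(1/K)$. Expanding via the chain rule,
\begin{align*}
\Delta\!\left(\frac{1}{K}\right) = -\frac{\Delta K}{K^{2}} + \frac{2\,\brs{\N K}^{2}}{K^{3}},
\end{align*}
so that $-e^{-2u}\Delta_0 \dot u = \bar K\, \Delta K / K^{2} - 2\bar K\, \brs{\N K}^{2} / K^{3}$. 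Combining with the earlier contribution produces exactly the desired expression \eqref{pcecurvev}.

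No step here is an obstacle: the argument is a short, completely mechanical calculation, structurally identical to the negative-cone version, with the sign of $\dot u$ (and correspondingly $\bar K$) flipped. The only point to watch is keeping the Laplacian conventions straight when passing between $\Delta_0$ and $\Delta_{g_u}$, which the conformal factor $e^{-2u}$ handles cleanly in two dimensions.
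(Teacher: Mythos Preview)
Your proposal is correct and follows essentially the same approach as the paper's own proof: differentiate the conformal curvature formula, substitute the flow equation, rewrite $e^{-2u}\Delta_0$ as $\Delta_{g_u}$, and expand $\Delta(1/K)$ via the chain rule. The only difference is that you spell out a few routine justifications (e.g., why $\bar K$ passes through $\Delta$) that the paper leaves implicit.
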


\begin{lemma} \label{highreglem2} Given $u$ a solution to (\ref{surfacepce})
we
have
 \begin{align*}
  \dt K^{-1} =&\ \frac{\bar{K}}{K^2} \gD K^{-1}- 2 K^{-1} + 2
\frac{\bar{K}}{K^2}.
 \end{align*}
 \begin{proof} We compute
 \begin{align*}
  \dt K^{-1} =&\ - \frac{\dt K}{K^2}\\
  =&\ \frac{1}{K^2} \left[ 2 u_t K + e^{-2 u} \gD_{g_0} u_t \right]\\
  =&\ \frac{\bar{K}}{K^2} \gD K^{-1} - 2 K^{-1} + 2 \frac{\bar{K}}{K^2},
 \end{align*}
 as required.
 \end{proof}
\end{lemma}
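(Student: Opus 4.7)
The plan is to reduce the computation of $\dt K^{-1}$ to a direct use of the flow equation $(\ref{surfacepce})$ together with the conformal covariance of the Laplacian in two dimensions, which will let me rewrite a background Laplacian of $u_t$ as an intrinsic Laplacian of $K^{-1}$.

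First I would write $\dt K^{-1} = -K^{-2}\dt K$ and get $\dt K$ by differentiating the Gauss curvature identity $K = e^{-2u}(-\gD_0 u + K_0)$ in $t$. This produces $\dt K = -2 u_t K - e^{-2u}\gD_0 u_t$, so
\[
\dt K^{-1} \;=\; \frac{1}{K^2}\bigl[\, 2 u_t K + e^{-2u}\gD_0 u_t \,\bigr].
\]

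Next I would substitute $u_t = (\bar K - K)/K$ from $(\ref{surfacepce})$. The pointwise piece contributes $K^{-2}\cdot 2(\bar K - K) = 2\bar K / K^2 - 2/K$, which already accounts for two of the three terms in the claim. For the Laplacian piece, I would invoke the two-dimensional conformal covariance $e^{-2u}\gD_0 f = \gD f$, valid for any smooth $f$, to rewrite $e^{-2u}\gD_0 u_t = \gD u_t = \gD\!\left(\bar K / K - 1\right) = \bar K\, \gD K^{-1}$, using that $\bar K$ depends only on $t$ and that constants are annihilated by $\gD$. Dividing by $K^2$ then yields the first term $\bar K K^{-2}\gD K^{-1}$, and collecting gives the stated identity.

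The calculation is essentially routine; there is no serious obstacle. The only point requiring care is the conformal transformation rule for the Laplacian on a surface, and once $e^{-2u}\gD_0 u_t$ is recognized as $\gD(u_t)$ and $u_t$ is replaced by $\bar K/K - 1$, the identity drops out algebraically.
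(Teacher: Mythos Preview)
Your proof is correct and follows exactly the same route as the paper: compute $\dt K^{-1} = -K^{-2}\dt K$, expand $\dt K$ via the conformal Gauss curvature formula, and then use the two-dimensional conformal covariance $e^{-2u}\gD_0 = \gD$ together with $u_t = \bar K/K - 1$ to identify the Laplacian term as $\bar K\,\gD K^{-1}$. In fact you spell out explicitly the step the paper leaves implicit between its second and third lines.
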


\begin{lemma} \label{n2areaestimate} Let $u$ be a solution to
(\ref{surfacepce}).  Then
\begin{align*}
A(t) = A(0) e^{2(F[u_0] - F[u_t])}.
\end{align*}
\begin{proof} We observe
\begin{align*}
\frac{d}{dt} A =&\ \dt \int_M e^{2u} dV_0\\
=&\ 2 \int_M \frac{\bar{K} - K}{K} dV_u\\
=&\ A \left[ - 2 + \frac{\bar{K}}{A^2} \int_M \frac{1}{K} dV_u \right]\\
=&\ - 2 A \frac{dF}{dt}.
\end{align*}
Integrating this ODE we conclude the result.
\end{proof}
\end{lemma}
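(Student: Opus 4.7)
The plan is to reduce the claim to an ordinary differential equation relating $A(t)$ and $F[u(t)]$ and then integrate. The main computational input has already been recorded in the proof of Proposition \ref{Fflowconvexity}; the rest is a one-line application of Gauss--Bonnet.

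First I would differentiate $A(t) = \int_M e^{2u}\,dV_0$ directly in time and substitute the flow equation $u_t = (\bar{K} - K)/K$ to obtain
\begin{align*}
\frac{dA}{dt} \;=\; 2\int_M u_t\,dV_u \;=\; 2\bar{K} \int_M \frac{1}{K}\,dV_u \;-\; 2A.
\end{align*}
From the proof of Proposition \ref{Fflowconvexity} one already has
\begin{align*}
\frac{dF}{dt} \;=\; 2\pi\chi(M) \;-\; \bar{K}^2 \int_M \frac{1}{K}\,dV_u.
\end{align*}
Both expressions are affine combinations of the same two quantities $A$ and $\int_M K^{-1}\,dV_u$, so they are linearly dependent. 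Substituting the Gauss--Bonnet identity $2\pi\chi(M) = \bar{K}\,A$ into the second formula and comparing coefficients with the first produces the algebraic relation
\begin{align*}
\frac{dA}{dt} \;=\; -\,2A\,\frac{dF}{dt}.
\end{align*}

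This rewrites as $\frac{d}{dt}\log A = -2\,\frac{dF}{dt}$, which integrates over $[0,t]$ to give $\log A(t) - \log A(0) = 2(F[u_0] - F[u_t])$, i.e.\ the claimed formula $A(t) = A(0)\,e^{2(F[u_0]-F[u_t])}$. There is no genuine obstacle here; the only subtlety is bookkeeping the constants in the Gauss--Bonnet substitution so as to match the normalization of $F$ as the normalized Liouville energy, at which point the proportionality constant between $\frac{dA}{dt}$ and $A\,\frac{dF}{dt}$ comes out to exactly $-2$.
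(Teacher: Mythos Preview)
Your proposal is correct and follows essentially the same route as the paper: differentiate $A(t)$, substitute the flow equation to get $\frac{dA}{dt}=2\bar K\int_M K^{-1}\,dV_u-2A$, and then recognize this as $-2A\,\frac{dF}{dt}$ using the expression for $\frac{dF}{dt}$ already obtained in the proof of Proposition~\ref{Fflowconvexity} together with the Gauss--Bonnet relation $\bar K A=2\pi\chi$, after which one integrates the ODE $\frac{d}{dt}\log A=-2\,\frac{dF}{dt}$. The only difference is cosmetic: the paper passes directly from the $dA/dt$ computation to $-2A\,dF/dt$ in one line, whereas you explicitly invoke the earlier formula and Gauss--Bonnet.
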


\begin{lemma} \label{n2dirichlet} Let $u$ be a solution to (\ref{surfacepce}).
 Then
\begin{align*}
\int_M \left( \brs{\N u}^2 + 2 K_0 u \right) dV_0 =&\ \int_M \left( \brs{\N u_0}^2
+ 2 K_0 u_0 \right) dV_0.
\end{align*}
\begin{proof} One observes that
\begin{align*}
\int_M \left( \brs{\N u}^2 + 2 K_0 u \right) dV_0 = F[u] + \log A,
\end{align*}
and the result follows directly from the calculation of Lemma
\ref{n2areaestimate}.
\end{proof}
\end{lemma}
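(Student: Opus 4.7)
The plan is to differentiate $J[u(t)] := \int_M (\brs{\N u}^2 + 2 K_0 u) \, dV_0$ in time along the flow and show the derivative vanishes identically. By (\ref{Jdot}), the first variation of $J$ at $u$ in the direction $u'$ is $(J')_u(u') = 2 \int_M u' K_u \, dA_u$; this is really what makes the argument clean, since the awkward background-metric factors in $\brs{\N u}^2 + 2 K_0 u$ collapse into a simple pairing against the Gauss curvature of the evolving metric.

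Substituting the IGCF equation (\ref{surfacepce}), namely $u_t = (\bar K - K)/K$, into this variation formula, the factor $K_u$ on the outside cancels the $K$ in the denominator and one is left with
\begin{align*}
\frac{d}{dt} J[u(t)] = 2 \int_M (\bar K_u - K_u) \, dA_u.
\end{align*}
Now Gauss--Bonnet gives $\int_M K_u \, dA_u = 2 \pi \chi$, while $\bar K_u$ is by definition the constant $2\pi\chi/A$, so $\int_M \bar K_u \, dA_u = \bar K_u \cdot A = 2 \pi \chi$ as well. The two terms cancel, $J$ is conserved in time, and in particular $J[u(t)] = J[u_0]$ for all $t$, which is exactly the stated identity.

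The alternative route sketched in the one-line proof hint is to observe that directly from (\ref{Fdef}) one has $J[u] = F[u] + (\int K_0 \, dA_0) \log(A/A_0)$, and then to use Lemma \ref{n2areaestimate} to express $\log A(t)$ as an affine function of $F[u(t)]$, arranging cancellation of the $F[u(t)]$ pieces. I would expect this to give the same conclusion, but it requires bookkeeping the coefficient in the area ODE, so I would prefer to present the one-line variational argument above. No real obstacle arises in either case: this lemma is an algebraic consequence of Gauss--Bonnet together with the first variation formula for $J$.
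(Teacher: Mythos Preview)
Your argument is correct and is actually more direct than the paper's. The paper proves this by writing $J[u] = F[u] + c\log A$ (for an appropriate constant $c$) and then invoking Lemma \ref{n2areaestimate}, which gives $\log A(t) + 2F[u_t]$ as a conserved quantity; matching coefficients yields conservation of $J$. Your route bypasses the area lemma entirely: you differentiate $J$ directly via (\ref{Jdot}) and kill the result with Gauss--Bonnet in one line. The paper's approach has the mild advantage of explaining \emph{why} the cancellation happens (it ties $J$, $F$, and $A$ together structurally), whereas yours is self-contained and avoids tracking the normalization constants in $F$ and in the area ODE---exactly the bookkeeping you flagged. Either is fine here; your version would be the natural choice if one wanted this lemma without first establishing Lemma \ref{n2areaestimate}.
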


\subsubsection{A priori estimates}

\begin{prop} \label{pcecub} Let $u$ be a solution to (\ref{surfacepce}).  For
all smooth existence times $t$ of the flow one has
\begin{align*}
\sup_{M \times \{t\}} K \leq \sup_M K_0 e^{2 t}.
\end{align*}
\begin{proof} This follows directly from the maximum principle applied to
(\ref{pcecurvev}).
\end{proof}
\end{prop}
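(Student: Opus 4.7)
The plan is to apply the parabolic maximum principle directly to the evolution equation (\ref{pcecurvev}), which reads
\begin{align*}
\dt K = \frac{\bar{K}}{K^2} \gD K - 2 \frac{\bar{K}\brs{\N K}^2}{K^3} + 2(K - \bar{K}).
\end{align*}
Since we are in $\Gpos$ we have $K > 0$ everywhere and hence $\bar{K} > 0$. The coefficient $\bar{K}/K^2$ is therefore positive, so the operator on the right-hand side is parabolic with the gradient term of favorable sign.

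First I would introduce the time-dependent quantity $K_{\max}(t) = \sup_{x \in M} K(x,t)$ and show that in the barrier/viscosity sense (or by a standard smoothing argument, cf.\ Hamilton's trick) it satisfies an ODI. At any spatial maximum point $x_0$ of $K(\cdot,t)$ we have $\N K(x_0,t) = 0$ and $\gD K(x_0,t) \leq 0$. Since $\bar{K}/K^2 > 0$, the first term on the right of (\ref{pcecurvev}) is $\leq 0$ and the second term vanishes, leaving
\begin{align*}
\dt K(x_0,t) \leq 2\bigl(K(x_0,t) - \bar{K}(t)\bigr) \leq 2 K_{\max}(t),
\end{align*}
where in the last inequality I used $\bar{K} > 0$.

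Thus $K_{\max}$ satisfies the differential inequality
\begin{align*}
\frac{d}{dt} K_{\max}(t) \leq 2 K_{\max}(t),
\end{align*}
and Gr\"onwall's lemma immediately yields $K_{\max}(t) \leq K_{\max}(0)\, e^{2t} = \sup_M K_0 \cdot e^{2t}$, as claimed.

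The only subtlety is that $K_{\max}(t)$ need not be smooth in $t$, but this is routine: either one applies the maximum principle to the auxiliary function $e^{-2t} K(x,t)$ (which by the same calculation satisfies $\del_t(e^{-2t}K) \leq (\bar{K}/K^2)\gD(e^{-2t}K) - 2(\bar{K}/K^3)\brs{\N(e^{-2t}K)}^2 \cdot e^{2t} - 2 e^{-2t}\bar{K} \leq (\bar{K}/K^2)\gD(e^{-2t}K)$ at a maximum) to conclude that its supremum is non-increasing, or one argues via the standard Lipschitz-in-$t$ property of $K_{\max}$ and Dini derivatives. No step here is an obstacle; this is a straightforward maximum principle argument, and the main (minor) care is just to confirm the signs work out so that $\bar{K}$ can be discarded when bounding from above.
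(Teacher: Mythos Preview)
Your proposal is correct and is exactly the maximum-principle argument the paper has in mind: at a spatial maximum the elliptic and gradient terms in (\ref{pcecurvev}) are nonpositive (using $\bar{K}, K > 0$), leaving $\partial_t K \leq 2(K - \bar{K}) \leq 2K$, and Gr\"onwall finishes. The paper's proof is the one-line ``apply the maximum principle to (\ref{pcecurvev}),'' and you have simply spelled it out.
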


\begin{prop} \label{pceulb} Let $u$ be a solution to (\ref{surfacepce}).  For
all smooth existence times $t$ of the flow one has
 \begin{align*}
  \inf_{M \times\{t\}} u \geq \inf_M u_0 - t.
 \end{align*}
\begin{proof} We observe that at a spacetime minimum for $u$, one has $\gD_{g_0}
u \geq 0$, and hence
\begin{align*}
 \dt u =&\ - 1 + \frac{\bar{K}}{K}\\
 \geq&\ - 1 + \frac{\bar{K} e^{2 u}}{K_0}\\
 \geq&\ - 1.
\end{align*}
The result follows from the maximum principle.
\end{proof}
\end{prop}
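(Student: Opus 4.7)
The plan is to apply the maximum principle directly to the original flow equation $\partial_t u = (\bar{K}-K)/K = \bar{K}/K - 1$. The only ingredient about the geometry of $S^2$ that I need is that $\bar{K}_u > 0$ throughout the flow, which follows from Gauss--Bonnet since on $S^2$ we have $\bar K_u = 4\pi / A_u > 0$ for every $u \in \gG_1^+$.

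First, I would observe that $K_u$ can be expressed in terms of $u$ via the Gauss curvature equation $K_u = e^{-2u}(-\Delta_0 u + K_0)$. At a spatial minimum point $x_0$ of $u(\cdot,t)$ one has $\Delta_0 u(x_0,t) \geq 0$, so
\begin{align*}
K_u(x_0,t) \leq e^{-2u(x_0,t)} K_0(x_0).
\end{align*}
Combined with the positivity of $\bar{K}_u$, this yields the pointwise lower bound
\begin{align*}
\partial_t u(x_0,t) \;=\; -1 + \frac{\bar{K}_u}{K_u} \;\geq\; -1 + \frac{\bar{K}_u\, e^{2u(x_0,t)}}{K_0(x_0)} \;\geq\; -1,
\end{align*}
since every factor in the second term is positive.

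To conclude, I would apply the parabolic maximum principle to the auxiliary function $v(x,t) := u(x,t) + t - \inf_M u_0$. At $t=0$, clearly $v \geq 0$. If $v$ ever became negative, there would be a first time $t_* > 0$ and a point $x_* \in M$ where $v(x_*,t_*) = 0$ and $\partial_t v(x_*,t_*) \leq 0$; but $x_*$ is then a spatial minimum of $u(\cdot,t_*)$, so by the computation above $\partial_t u(x_*,t_*) \geq -1$, giving $\partial_t v(x_*,t_*) \geq 0$, and hence $v \equiv 0$ in a neighborhood. A standard contradiction/continuity argument then yields $v \geq 0$ throughout, which is the desired estimate.

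There is no real obstacle here: this is a routine one-sided maximum principle estimate, and the only substantive input is the sign condition $\bar{K}_u > 0$ which is automatic on the sphere. The main point to be careful about is that $\bar{K}_u$ remains controlled (in particular positive) along the flow, which is immediate from Gauss--Bonnet and the fact that $u(\cdot,t) \in \gG_1^+$ is assumed throughout the smooth existence interval.
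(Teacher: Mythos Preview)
Your proposal is correct and follows essentially the same approach as the paper: at a spatial minimum of $u$ one has $\Delta_0 u \geq 0$, hence $K_u \leq e^{-2u}K_0$, and combined with $\bar{K}_u>0$ this gives $\partial_t u \geq -1$, from which the bound follows by the maximum principle. Your write-up adds the explicit justification $\bar{K}_u = 4\pi/A_u > 0$ via Gauss--Bonnet and spells out the auxiliary function $v = u + t - \inf_M u_0$, but the argument is the same as the paper's.
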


\begin{prop} \label{pcegradL2} Let $u$ be a solution to (\ref{surfacepce}).
There exists a constant $C = C(u_0)$ such that for
all smooth existence times $t$ of the flow one has
 \begin{align*}
\brs{\brs{\N u(t)}}_{L^2}^2 \leq C \left[1 + t \right]
 \end{align*}
\begin{proof} We use Lemma \ref{n2dirichlet} with the estimate of Proposition
\ref{pceulb} to yield
\begin{align*}
\brs{\brs{\N u}}_{L^2}^2 =&\ \int_M \left(\brs{\N u}^2 + 2 K_0 u - 2 K_0 u \right)
dV_0\\
=&\ \int_M \left(\brs{\N u_0}^2 +2 K_0 u_0 \right) dV_0 - 2 K_0 \int_M u dV_0\\
\leq&\ C - C \inf u\\
\leq&\ C \left(1 + t \right).
\end{align*}
\end{proof}
\end{prop}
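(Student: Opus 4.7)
The plan is to combine the conservation identity of Lemma \ref{n2dirichlet} with the one-sided bound on $u$ coming from Proposition \ref{pceulb}. Lemma \ref{n2dirichlet} tells us that $\int_M (\brs{\N u}^2 + 2K_0 u)\,dV_0$ is constant in $t$, so by simply rearranging
\[
\brs{\brs{\N u(t)}}_{L^2}^2 = \int_M \bigl(\brs{\N u_0}^2 + 2 K_0 u_0\bigr)\,dV_0 - 2 \int_M K_0 u \,dV_0.
\]
The first term is a fixed constant depending only on initial data, so the entire task reduces to producing an upper bound on $-\int_M K_0 u \, dV_0$, i.e. a lower bound on $\int_M K_0 u\, dV_0$.

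Since we are on $S^2$ and $K_0 > 0$, I would simply estimate pointwise $u(x,t) \geq \inf_{M \times \{t\}} u$ and pull this constant out of the integral; Gauss--Bonnet then replaces $\int_M K_0 \,dV_0$ by $4\pi$. This yields a lower bound for $\int_M K_0 u\, dV_0$ that is linear in $\inf_M u(\cdot,t)$. Substituting the estimate of Proposition \ref{pceulb} --- namely $\inf_M u(\cdot,t) \geq \inf_M u_0 - t$ --- produces the claimed bound of the form $C + C t$, which is dominated by $C(1+t)$.

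There is essentially no obstacle here: the argument is a short chain of inequalities once the two preceding results are in hand. The one point worth noting is that the positivity of $K_0$ is used in an essential way in order to preserve the inequality when one replaces $u$ by $\inf_M u$ inside the integrand; this is of course consistent with the entire subsection being devoted to the sphere case $M \cong S^2$.
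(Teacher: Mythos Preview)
Your proposal is correct and follows essentially the same approach as the paper: both use the conservation law of Lemma \ref{n2dirichlet} to reduce to bounding $-\int_M K_0 u\,dV_0$, then invoke $K_0>0$ together with the lower bound on $\inf_M u$ from Proposition \ref{pceulb}. Your version is in fact slightly more careful, since you keep $K_0$ inside the integral and invoke Gauss--Bonnet explicitly, whereas the paper's displayed line pulls $K_0$ outside the integral as if it were constant.
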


\begin{prop} \label{pceuub} Let $u$ be a solution to (\ref{surfacepce}) on $[0,T)$.  There exists a constant $C =
C(u_0,T)$ such one has
\begin{align*}
\sup_{M \times [0,T)} \brs{u} \leq C.
\end{align*}
\begin{proof} First from Proposition \ref{pcegradL2}  there is a time-dependent
bound on $\brs{\brs{\N u}}_{L^2}$.  By the Moser-Trudinger inequality we obtain
a uniform estimate of $\int_M e^{4 \brs{u}} dV_0$.  We now claim that there is a
uniform constant $R > 0$ so that
\begin{align*}
\sup_{x \in S^2, t \in [0,T)} \int_{B_R(x_0)} \brs{K_t} dV_t < 2 \pi.
\end{align*}
Using Proposition \ref{pcecub} we estimate
\begin{align*}
 \int_{B_R(x_0)} \brs{K_t} dV_t =&\ \int_{B_R(x_0)} K_t e^{2 u_t} dV_0\\
\leq&\ C \int_{B_R(x_0)} e^{2 u_t} dV_0\\
\leq&\ C \left[ \int_{B_r(x_0)} e^{4 \brs{u_t}} dV_0 \right]^{\frac{1}{2}}
\left[ \int_{B_R(x_0)} dV_0 \right]^{\frac{1}{2}}\\
\leq&\ C R\\
<&\ 2 \pi,
\end{align*}
where the last inequality follows by choosing $R$ small with respect to $C$.
Invoking \cite{Struwe} Theorem 3.2 we conclude a uniform $H_2^2$ bound for $u$,
which by the Sobolev inequality implies the uniform bound for $u$.
\end{proof}
\end{prop}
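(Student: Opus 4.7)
The lower bound $u \geq \inf_M u_0 - t$ is immediate from Proposition \ref{pceulb}, so the entire content of the proposition is the upper bound, and the proof depends in an essential way on finite $T$. My plan is to leverage the gradient bound, promote it to exponential integrability via Moser--Trudinger, use this together with the curvature upper bound to preclude concentration of $\brs{K_t}\,dV_t$, and finally invoke a Brezis--Merle / Struwe-type concentration-compactness result for the Gauss curvature equation $-\gD_0 u + K_0 = K e^{2u}$.

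The first step is exponential integrability. Proposition \ref{pcegradL2} gives $\|\N u(t)\|_{L^2}^2 \leq C(1+T)$ on $[0,T)$. Lemma \ref{n2dirichlet} conserves the Dirichlet--linear combination $\int(\brs{\N u}^2 + 2 K_0 u)\,dV_0$, which (since on $S^2$ we may take $K_0$ a positive constant) controls $\int u\, dV_0$ in terms of $\|\N u\|_{L^2}^2$ and initial data. Hence the mean $\bar u$ is bounded by $C(u_0,T)$. Applying the Moser--Trudinger inequality to $u - \bar u$ then yields $\int_M e^{p\brs{u}}\, dV_0 \leq C(u_0,T)$ for any fixed $p$, uniformly on $[0,T)$.

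The second step is to show curvature does not concentrate at small scales. Proposition \ref{pcecub} bounds $K_t \leq (\sup_M K_0)\, e^{2T}$, so for any ball $B_R(x_0) \subset S^2$ and any $t \in [0,T)$,
\begin{align*}
\int_{B_R(x_0)} \brs{K_t}\, dV_t = \int_{B_R(x_0)} K_t\, e^{2u_t}\, dV_0 \leq C(T) \Bigl[\int_M e^{4\brs{u_t}}\, dV_0\Bigr]^{1/2} \brs{B_R}^{1/2} \leq C(u_0,T) R.
\end{align*}
Choosing $R$ small, this is $< 2\pi$ uniformly in $x_0$ and $t$; i.e., the curvature measure $\brs{K_t}\, dV_t$ does not concentrate in balls of any fixed small radius.

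The final step is the regularity upgrade. The conformal factor satisfies $-\gD_0 u = K e^{2u} - K_0$; combining the uniform $L^\infty$ bound for $K$, the uniform bound for the right-hand side in, say, $L^{p}$ (via the exponential integrability), and the no-concentration estimate above, a Brezis--Merle / Struwe concentration-compactness argument upgrades this to a uniform $W^{2,q}$ bound on $u$ for some $q > 1$ on $[0,T)$. Sobolev embedding on the two-dimensional sphere then yields the desired uniform $L^\infty$ bound. The main obstacle is the concentration-compactness step: the gradient bound by itself grows with $T$, so without the small-ball curvature estimate one cannot rule out bubbling of the conformal factor, and the sharpness of the $2\pi$ threshold is exactly what makes the argument go through.
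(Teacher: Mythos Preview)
Your proposal is correct and follows essentially the same route as the paper: gradient bound via Proposition \ref{pcegradL2}, Moser--Trudinger for exponential integrability, the curvature upper bound from Proposition \ref{pcecub} plus Cauchy--Schwarz to force $\int_{B_R}|K_t|\,dV_t < 2\pi$, and finally the Struwe concentration-compactness result (Theorem 3.2 of \cite{Struwe}) to upgrade to a $W^{2,2}$ bound and hence $L^\infty$. If anything, you are slightly more explicit than the paper in isolating the mean bound needed to apply Moser--Trudinger, which you obtain from Lemma \ref{n2dirichlet}.
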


\begin{prop} \label{pceclb} Let $u$ be a solution to (\ref{surfacepce}) on
$[0,T)$ such that
$\brs{u} \leq \gL$.  There exists a constant $C = C(T,u_0)$ such that
 \begin{align*}
K^{-1} \leq C.
 \end{align*}
\begin{proof} Let $\Phi = t K^{-1} + A u$, where $A > 0$ is a constant yet to be
determined.  Using Lemmas \ref{highreglem1}, \ref{highreglem2} and Proposition
\ref{pceuub} we have
\begin{align*}
\left( \dt - \frac{\bar{K}}{K^2} \gD \right) \Phi =&\ - 2 K^{-1} + 2 \bar{K}
K^{-2} + A \left[ -1 + 2 \bar{K} K^{-1} - e^{-2 u} K_0 \bar{K} K^{-2} \right]\\
\leq&\ K^{-2} \left[ C + C A K - \gd A \right].
\end{align*}
where the constant $\gd > 0$ is determined by the upper bound for $u$.  If we
choose $A$ sufficiently large with respect to $\gd$, then at a sufficiently
large maximum for $K^{-1}$ we obtain
\begin{align*}
C + C A K - \gd A \leq C - \frac{\gd}{2} A \leq 0.
\end{align*}
The result follows from the maximum principle.
\end{proof}
\end{prop}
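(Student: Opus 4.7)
The plan is to run the parabolic maximum principle on an auxiliary function of the form $\Phi = t K^{-1} + A u$, where $A>0$ is a large constant to be chosen. Both $K^{-1}$ and $u$ satisfy parabolic equations with the same leading operator $\mathcal{L} := \dt - \frac{\bar{K}}{K^2}\gD$ by Lemmas \ref{highreglem1} and \ref{highreglem2}, so $\Phi$ also satisfies a parabolic-type equation with this operator (note that $\bar{K}>0$ on $S^2$, so $\mathcal{L}$ is genuinely parabolic once we know $K$ stays finite).

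Computing $\mathcal{L}\Phi$ term by term, the only potentially dangerous contributions at points where $K^{-1}$ is large are $2 t \bar{K} K^{-2}$ coming from $L(tK^{-1})$ and $-A e^{-2u} K_0 \bar{K} K^{-2}$ coming from $Lu$. The latter has the favorable sign, and the hypotheses $|u|\leq\Lambda$ together with $K_0 > 0$ on $S^2$ give $e^{-2u}K_0\geq\delta>0$ uniformly on $M\times[0,T)$. Collecting everything, one arrives at an estimate of the schematic form
\begin{align*}
\mathcal{L}\Phi \;\leq\; K^{-2}\bigl(C + C A K - \delta A\bigr),
\end{align*}
which is strictly negative whenever $K$ is sufficiently small, provided $A$ is chosen large relative to $\delta$ and $C$.

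At any interior spacetime maximum of $\Phi$ one must have $\mathcal{L}\Phi \geq 0$, so the estimate above forces $K$ to be bounded below (equivalently $K^{-1}$ bounded above) at such a maximum. Combined with the boundary value $\Phi(\cdot,0) = A u_0$, which is bounded by initial data, and the standing bound $|u|\leq\Lambda$, this yields $\sup_{M\times[0,T)} t K^{-1} \leq C(T, u_0)$, and hence $K^{-1}\leq C(T,u_0)$ on $M\times[0,T)$ after absorbing the behavior near $t=0$ into the smoothness of the initial metric.

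The main obstacle is selecting the right auxiliary quantity. The $K^{-1}$-equation by itself lacks any reaction term with the good sign to prevent blow-up; what rescues the argument is that the $u$-equation, via the Gauss equation $K=e^{-2u}(K_0-\gD_0 u)$, carries the strongly negative term $-e^{-2u}K_0\bar{K}K^{-2}$, and the assumed $L^\infty$ bound on $u$ is exactly what makes this term uniformly coercive. Adding $Au$ to $tK^{-1}$ with $A$ large imports precisely this coercivity into the equation for $\Phi$ and closes the maximum principle argument.
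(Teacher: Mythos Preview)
Your proposal is correct and follows essentially the same approach as the paper: both use the auxiliary function $\Phi = t K^{-1} + A u$, compute $\bigl(\partial_t - \tfrac{\bar K}{K^2}\Delta\bigr)\Phi$ via Lemmas~\ref{highreglem1} and~\ref{highreglem2}, exploit the coercive term $-A e^{-2u}K_0\bar K K^{-2}$ (made uniformly negative by the bound $|u|\le\Lambda$ and $K_0>0$), and close with the maximum principle. Your write-up is in fact slightly more careful than the paper's in tracking the $t$-factor on the $K^{-1}$ contribution and in explaining why the bound on $tK^{-1}$ extends to a bound on $K^{-1}$ near $t=0$.
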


\begin{prop}\label{PCflow} Let $(M^2, g_0)$ be a compact Riemann surface such that $K_0 > 0$.
Given $u \in C^{\infty}(M)$, the solution to (\ref{surfacepce}) exists for all
time.
\begin{proof} Combining Propositions \ref{pceuub}, \ref{pcecub}, and
\ref{pceclb}, we obtain uniform estimates on $u$, $K$, and $K^{-1}$ for any
finite existence time $T$.  Given these the higher order estimates follow as in
the proof of Proposition \ref{ncehessest} and Theorem \ref{nceltethm}, and
so the long time existence follows.
\end{proof}
\end{prop}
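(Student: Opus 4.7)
The plan is to combine the a priori $C^0$ estimates already established in Propositions \ref{pceuub}, \ref{pcecub}, and \ref{pceclb} with a Hessian estimate and Evans-Krylov regularity, then close the argument via a standard continuation scheme. Short-time existence of (\ref{surfacepce}) follows from standard parabolic theory once we observe that, for admissible initial data $u_0$ with $K_{u_0} > 0$, the operator $\Phi(u) = \bar K_u/K_u - 1$ is fully nonlinear parabolic: its linearization involves the coefficient $\bar{K}/K^2 > 0$ on the Laplacian, which is strictly positive as long as $K > 0$. So the issue is purely to rule out finite-time blow up.

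Fix any $T < \infty$ and assume a smooth solution on $[0,T)$. By Proposition \ref{pceuub} we have a uniform bound $|u| \leq \Lambda_0(T,u_0)$. By Proposition \ref{pcecub} we have $K \leq \Lambda_1(T, u_0)$, and by Proposition \ref{pceclb} we have $K \geq \Lambda_2(T, u_0)^{-1} > 0$. In particular, $K_0 - \Delta_0 u = e^{2u} K$ is bounded above and below away from $0$, so $\Delta_0 u$ is uniformly bounded on $M \times [0,T)$.

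Next I would upgrade the Laplacian bound to a full Hessian bound by adapting the argument of Proposition \ref{ncehessest}. The only change relative to the negative-curvature case is a sign in the zeroth-order terms in the evolution of $K$ and $K^{-1}$ (compare Lemmas \ref{highreglem1}, \ref{highreglem2} here with their negative-cone analogues), which produce at worst $X_i$-terms in the sense of that proof; the leading elliptic structure $-\tfrac{\bar K}{K^2}\Delta$ is identical. So the same test function $\Phi = t\beta + A|\nabla u|_0^2$ with $\beta$ the largest Hessian eigenvalue, together with the gradient estimate from Proposition \ref{pcegradL2} (localized in $t$ using the $L^\infty$ bound on $u$), yields
\begin{equation*}
\sup_{M \times [0,T)} |\nabla^2_0 u| \leq C(T, u_0).
\end{equation*}

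With $C^2$ bounds in hand, the operator $\Phi(u)$ is uniformly elliptic and, as recorded earlier in the paper, convex in the space-time Hessian. Hence Evans-Krylov gives a uniform $C^{2,\alpha}$ estimate on $[0,T)$, and bootstrapping via Schauder yields uniform $C^{k,\alpha}$ bounds for all $k$. These bounds permit us to extend the solution past $T$ by standard short-time existence, so the maximal existence time must be $+\infty$.

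The main obstacle, which is actually the subtle point in the whole chain, is the lower bound for $K$ in Proposition \ref{pceclb}: without it the operator degenerates and one cannot invoke Evans-Krylov or even define the linearized parabolic problem. The maximum principle argument there exploits the specific auxiliary function $tK^{-1} + Au$ with $A$ chosen large relative to the $C^0$-bound for $u$ to kill the bad zeroth-order contribution coming from $2\bar K K^{-1}$ in $\partial_t K^{-1}$. The remaining steps are essentially routine adaptations of the negative-cone arguments, so once this lower bound is in place the long-time existence follows.
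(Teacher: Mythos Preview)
Your proposal is correct and follows essentially the same route as the paper: invoke Propositions \ref{pceuub}, \ref{pcecub}, \ref{pceclb} for uniform control of $u$, $K$, $K^{-1}$ on any finite interval, then run the Hessian argument of Proposition \ref{ncehessest} and the Evans--Krylov/Schauder bootstrap of Theorem \ref{nceltethm} to extend. One small correction: the gradient input needed in the Hessian estimate is not the $L^2$ bound of Proposition \ref{pcegradL2} but the pointwise evolution inequality for $|\nabla_0 u|^2$ analogous to Lemma \ref{surfncegrad}, whose good $-\tfrac{2\bar K}{K^2}e^{-2u}|\nabla_0^2 u|^2$ term (negative here since $\bar K>0$) is what absorbs the Hessian terms in the test function $\Phi = t\beta + A|\nabla u|_0^2$; with that substitution your outline matches the paper.
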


\begin{prop} \label{PCconv} Given $(S^2, g_{S^2})$, and $u \in \gG_1^+$, the solution to (\ref{surfacepce}) exists for all time and converges weakly in the distance topology to a minimizer for $F$ in the completion $(\bar{\gG}_1^+, \bar{d})$.
\begin{proof} We provide a sketch of the proof, as the key ingredients have been established already and the result follows formally from prior results.  The principal tool we require is a general result \cite{Bacak} concerning convergence of weak gradient flows in metric spaces.

As we have established that $(\gG_1^+,d)$ is NPC and convex by unique regularizable $C^{1,1}$ geodesics, the argument of (\cite{SKEMM} Lemma 5.9) shows that the completion $(\bar{\gG}_1^+, \bar{d})$ is also an NPC space.  Moreover, let $\bar{F}$ denote the extension of $F$ to $\bar{\gG}_1^+$ via its canonical lower-semicontinuous extension.  Following the argument of (\cite{SKEMM} Lemma 5.15), it follows that $\bar{F}$ is geodesically convex.  Moreover, we know that the minimum of $F$ is attained by any constant curvature metric, which exists by assumption.  It follows that these also realize the minimum of $\bar{F}$.  Hence we have verified the setup of (\cite{Mayer} Theorem 1.13), guaranteeing the existence of a global weak solution to the gradient flow of $\bar{F}$ with arbitrary initial data.  Moreover, following the argument of (\cite{SCCKEMM} Theorem 1.1) we can verify that the smooth global solutions of Proposition \ref{PCflow} coincide with the weak solutions constructed via (\cite{Mayer} Theorem 1.13).  The convergence of the weak flows, and hence the smooth flows, to a minimizer for $\bar{F}$ in the weak distance topology now follows from \cite{Bacak} Theorem 1.5.
\end{proof}
\end{prop}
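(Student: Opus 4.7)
The plan is to combine the long-time existence of smooth solutions established in Proposition \ref{PCflow} with the general machinery of gradient flows on NPC metric spaces, working in the metric completion $(\bar{\gG}_1^+, \bar{d})$ of the length space constructed in Section \ref{metricspace}. Since $(\gG_1^+, d)$ is an NPC length space in which any two points are joined by a unique $C^{1,1}$ regularizable geodesic (Proposition \ref{NPCprop} and Corollary \ref{metricspacecor}), a standard soft argument (cf.\ \cite{SKEMM} Lemma 5.9) shows that $(\bar{\gG}_1^+, \bar{d})$ is again an NPC space. I would then extend $F$ to $\bar{\gG}_1^+$ as its canonical lower-semicontinuous envelope $\bar{F}$, and use the geodesic convexity of $F$ along regularizable geodesics (Proposition \ref{Fgeodconv}) together with a density/approximation argument in the style of \cite{SKEMM} Lemma 5.15 to promote this to geodesic convexity of $\bar{F}$ along geodesics of the completion.

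Once the setup is in place, I would invoke the abstract theory of gradient flows for geodesically convex, lower-semicontinuous functionals on NPC spaces (Mayer \cite{Mayer} Theorem 1.13) to obtain, for any initial datum in $\bar{\gG}_1^+$, a global weak gradient flow for $\bar{F}$. Here the hypothesis $(M,g) \cong (S^2, g_{S^2})$ enters in a crucial place: it guarantees via uniformization the existence of a constant Gauss curvature representative, which must realize the infimum of $F$ (and therefore of $\bar{F}$). With a minimizer in hand, weak convergence of the abstract weak gradient flow to a minimizer in the distance topology follows immediately from Ba\v{c}\'ak's general result \cite{Bacak} Theorem 1.5.

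The remaining task, and the main technical issue, is to identify the smooth solution of IGCF produced in Proposition \ref{PCflow} with the weak gradient flow constructed abstractly via \cite{Mayer}. I would follow the template of \cite{SCCKEMM} Theorem 1.1, verifying that the smooth flow satisfies the defining energy-dissipation inequality (or the appropriate EVI formulation) for the weak flow. The two key inputs are already available: the fact that IGCF is by construction the negative gradient flow of $F$ in the formal Riemannian metric on $\gG_1^+$ (yielding the correct rate of energy decay), and the monotonicity $\frac{d}{dt} d(u(t), v(t)) \le 0$ of the distance along flow lines from Proposition \ref{lengthmonotonicity} (yielding the contraction property characteristic of gradient flows on NPC spaces). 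Uniqueness of weak gradient flows on NPC spaces then forces the identification and transports the abstract convergence back to the smooth flow.

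The principal obstacle is precisely this final matching step: one must check that the formal weighted-$L^2$ metric on $\gG_1^+$ induces the same notion of metric speed and gradient norm that appears in the abstract definition (via metric derivatives and descending slopes), so that ``smooth gradient flow in the formal sense'' genuinely satisfies the axiomatic inequalities used by Mayer and Ba\v{c}\'ak. This is a nontrivial compatibility check, but it is largely a bookkeeping exercise given the $C^{1,1}$ regularity of geodesics and the distance-contraction established in Proposition \ref{lengthmonotonicity}, and is carried out in the cited works in directly analogous settings.
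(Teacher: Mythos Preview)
Your proposal is essentially identical to the paper's own proof: both pass to the NPC completion via \cite{SKEMM} Lemma 5.9, extend $F$ to a geodesically convex lsc functional via \cite{SKEMM} Lemma 5.15, invoke \cite{Mayer} Theorem 1.13 for the abstract weak flow, identify it with the smooth IGCF via the argument of \cite{SCCKEMM} Theorem 1.1, and conclude convergence from \cite{Bacak} Theorem 1.5. Your write-up is in fact slightly more explicit than the paper's sketch about the role of Proposition \ref{lengthmonotonicity} in the identification step.
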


\begin{proof} [Proof of Theorem \ref{flowthm}] The required results follow from Propositions \ref{Fflowconvexity}, \ref{lengthmonotonicity}, \ref{nceltethm}, \ref{PCflow} and \ref{PCconv}.
\end{proof}

%

\end{document}